\newtheorem{theorem}{Theorem}
\newtheorem{lemma}{Lemma}
\newtheorem{corollary}{Corollary}
\newtheorem{definition}{Definition}
\newtheorem{claim}{Claim}
\newtheorem{proposition}{Proposition}
\newcommand{\hE}{\mathcal{E}}
\newcommand{\hN}{\mathcal{N}}
\newcommand{\hF}{\mathcal{F}}
\newcommand{\hP}{\mathcal{P}}
\newcommand{\hL}{\mathcal{L}}
\newcommand{\ii}{\textbf{i}}
\newcommand{\beq}{\begin{equation}}
\newcommand{\eeq}{\end{equation}}
\newcommand{\bq}{\begin{eqnarray}}
\newcommand{\eq}{\end{eqnarray}}
\newcommand{\bqn}{\begin{eqnarray*}}
\newcommand{\eqn}{\end{eqnarray*}}
\newcommand{\bee}{\begin{enumerate}}
\newcommand{\eee}{\end{enumerate}}
\newcommand{\bi}{\begin{itemize}}
\newcommand{\ei}{\end{itemize}}
\newcommand{\re}{\mathrm{Re}}
\newcommand{\im}{\mathrm{Im}}
\newcommand{\conv}{\mathrm{Conv}}
\newcommand{\rank}{\mathrm{Rank}}
\newcommand{\eqdef}{:=}
\newcommand{\lgan}[1]{\ifthenelse{\boolean{showcomments}}
{ \textcolor{red}{(Lingwen says: #1)} } {} }
\newcommand{\lina}[1]{\ifthenelse{\boolean{showcomments}}
{ \textcolor{green}{(Lina says:  #1)}}{}}
\newcommand{\slow}[1]{\ifthenelse{\boolean{showcomments}}
{ \textcolor{blue}{(Steven says:  #1)}}{}}
\newcommand{\addcite}[0]{\ifthenelse{\boolean{showcomments}}
{ \textcolor{red}{(addcite)}}{}}
\newcommand{\addcites}[0]{\ifthenelse{\boolean{showcomments}}
{ \textcolor{red}{(addcite(s))}}{}}
\newcommand{\addref}[0]{\ifthenelse{\boolean{showcomments}}
{ \textcolor{Blue}{(addref)}}{}}
\newcommand{\todo}[1]{\ifthenelse{\boolean{showcomments}}
{ \textcolor{red}{(To do: #1)}} {} }
\begin{document}

\title{Exact Convex Relaxation of Optimal Power Flow in Tree Networks}
\author{Lingwen Gan, Na Li, Ufuk Topcu, and Steven H. Low
        \thanks{This work was supported by NSF NetSE grant CNS 0911041, ARPA-E grant DE-AR0000226, Southern California Edison, National Science Council of Taiwan, R.O.C, grant NSC 101-3113-P-008-001, Resnick Institute, Okawa Foundation, NSF CNS 1312390, DoE grant DE-EE000289, and AFOSR award number FA9550-12-1-0302.

         Lingwen Gan, Na Li, and Steven H. Low are with the Engineering and Applied Science Department, California Institute of Technology, Pasadena, CA 91125 USA (e-mail: lgan@caltech.edu; nali@caltech.edu; slow@caltech.edu).

         Ufuk Topcu is with the Electrical and Systems Engineering Department, University of Pennsylvania, Philadelphia, PA 19104 USA (e-mail: utopcu@seas.upenn.edu).}
}
\maketitle

\begin{abstract}
The optimal power flow (OPF) problem seeks to control power generation/demand to optimize certain objectives such as minimizing the generation cost or power loss in the network. It is becoming increasingly important for distribution networks, which are tree networks, due to the emergence of distributed generation and controllable loads. In this paper, we study the OPF problem in tree networks. The OPF problem is nonconvex. We prove that after a ``small'' modification to the OPF problem, its global optimum can be recovered via a second-order cone programming (SOCP) relaxation, under a ``mild'' condition that can be checked apriori. Empirical studies justify that the modification to OPF is ``small'' and that the ``mild'' condition holds for the IEEE 13-bus distribution network and two real-world networks with high penetration of distributed generation.
\end{abstract} 
\section{Introduction}
The optimal power flow (OPF) problem seeks to control power generation/demand to optimize certain objectives such as minimizing the generation cost or power loss in the network. It is proposed by Carpentier in 1962 \cite{OPF} and has been one of the fundamental problems in power system operation ever since.

The OPF problem is becoming increasingly important for distribution networks, which are tree networks, due to the emergence of distributed generation \cite{DG} (e.g., rooftop solar panels) and controllable loads \cite{EV} (e.g., electric vehicles). Distributed generation is difficult to predict, calling the traditional control strategy of ``generation follows demand'' into question. Meanwhile, controllable loads provide significant potential to compensate for the randomness in distributed generation \cite{DR}. To integrate distributed generation and realize the potential of controllable loads, solving the OPF problem in real time for tree networks is inevitable.

The OPF problem is difficult to solve due to its nonconvex power flow constraints. There are in general three ways to deal with this challenge: (i) linearize the power flow constraints; (ii) look for local optima; and (iii) convexify power flow constraints, which are described in turn.

The power flow constraints can be well approximated by some linear constraints in transmission networks, and then the OPF problem reduces to a linear programming \cite{Stott74,Alsac90,Stott09}. This method is widely used in practice for transmission networks, but does not apply to distribution networks, nor problems that consider reactive power flow or voltage deviations explicitly.

Various algorithms have been proposed to find local optima of the OPF problem, e.g., successive linear/quadratic programming \cite{Contaxis1986}, trust-region based methods \cite{Min2005,Sousa2011}, Lagrangian Newton method \cite{Baptista2005}, and interior-point methods \cite{Torres1998,Jabr2003,Capitanescu2007}. However, a local optimum can be highly suboptimal.

Convexification methods are the focus of this paper. It is proposed in \cite{Bai08,Bai09,Javad12} to transform the nonconvex power flow constraints into linear constraints on a rank-one positive-semidefinite matrix, and then remove the rank-one constraint to obtain a semidefinite programming (SDP) relaxation. If the solution one obtains by solving the SDP relaxation is of rank one, then a global optimum of OPF can be recovered. In this case, we say that the SDP relaxation is {\it exact}. Strikingly, it is claimed in \cite{Javad12} that the SDP relaxation is exact for the IEEE 14-, 30-, 57-, and 118-bus test networks, highlighting the potential of convexification methods.

Another type of convex relaxations, i.e., second-order cone programming (SOCP) relaxations, have also been proposed to solve the OPF problem \cite{Masoud11,Taylor11,Jabr06}. While having a much lower computational complexity than the SDP relaxation, the SOCP relaxation is exact if and only if the SDP relaxation is exact for tree networks \cite{sojoudi2012physics,Bose_equivalence}. Hence, we focus on the SOCP relaxation in this paper, in particular the SOCP relaxation proposed in \cite{sojoudi2012physics}.

Up to date, sufficient conditions that have been derived for the exactness of the SOCP relaxation do not hold in practice \cite{Zhang12,Bose12,Masoud11,lam2012distributed}. For example, the conditions in \cite{Zhang12,Bose12,Masoud11} require some/all buses to be able to draw infinite power, and the condition in \cite{lam2012distributed} requires a fixed voltage at every bus.

\subsection*{Summary of contributions}
The goal of this paper is to prove that, the SOCP relaxation is exact under a mild condition, for a modified OPF problem. The condition holds for all test networks considered in this paper. The modified OPF problem has the same objective function as the OPF problem, but a slightly smaller feasible set. In particular, contributions of this paper are threefold.

First, we prove that {\it under Condition C1 (Lemma \ref{lemma: exact}), the SOCP relaxation is exact if its solutions lie in some region $\mathcal{S}$}. Condition C1 can be checked apriori, and holds for the IEEE 13-bus distribution network and two real-world  networks with high penetration of distributed generation. The proof of Condition C1 explores the feasible set of the SOCP relaxation: for any feasible point $w$ of the SOCP relaxation that is (in $\mathcal{S}$ but) infeasible for the OPF problem, one can find another feasible point $w'$ of the SOCP relaxation with a smaller objective value (if Condition C1 holds). Hence, optimal solutions of the SOCP relaxation, if in $\mathcal{S}$, must be feasible for the OPF problem.

Second, we {\it modify the OPF problem by intersecting its feasible set with $\mathcal{S}$}. This modification is necessary since otherwise examples exist where the SOCP relaxation is not exact. Remarkably, with this modification, only feasible points that are ``close'' to the voltage regulation upper bounds are eliminated, and the SOCP relaxation is exact under Condition C1. Empirical studies justify that the modification to the OPF problem is ``small'' for the IEEE 13-bus distribution network and two real-world networks with high penetration of distributed generation.

Third, we prove that {\it the SOCP relaxation has at most one solution if it is exact}. In this case, any convex programming solver gives the same solution. 
\section{The optimal power flow problem}\label{sec: opf}
This paper studies the optimal power flow (OPF) problem in distribution networks, which includes Volt/VAR control and demand response. In the following we present a model of this scenario that serves as the basis for our analysis. The model incorporates nonlinear power flow physical laws, considers a variety of controllable devices including distributed generators, inverters, controllable loads, and shunt capacitors, and allows for a wide range of control objectives such as minimizing the power loss or generation cost, which are described in turn.

\subsection{Power flow model}
A distribution network is composed of buses and lines connecting these buses, and has a tree topology.

There is a substation in a distribution network, which has a fixed voltage and a flexible power injection for power balance. Index the substation bus by 0 and the other buses by $1,\ldots,n$. Let $\hN\eqdef\{0,\ldots,n\}$ denote the set of all buses and $\hN^+:=\{1,\ldots,n\}$ denote the set of all non-substation buses. Each line connects an ordered pair $(i,j)$ of buses where bus $j$ is between bus $i$ and bus 0. Let $\hE$ denote the set of all lines and abbreviate $(i,j)\in \hE$ by $i\rightarrow j$. If $i\rightarrow j$ or $j\rightarrow i$, denote $i\sim j$; otherwise denote $i\nsim j$.

For each bus $i\in \hN$, let $V_i$ denote its voltage and $I_i$ denote its current injection. Specifically, the substation voltage, $V_0$, is given and fixed. Let $s_i= p_i+\ii q_i$ denote the power injection of bus $i$ where $p_i$ and $q_i$ denote its real and reactive power injections respectively. Specifically, $s_0$ is the power that the substation draws from the transmission network for power balance. Let $\hP_i$ denote the path (a collection of buses in $\hN$ and lines in $\hE$) from bus $i$ to bus 0.

For each line $i\sim j$, let $y_{ij}= g_{ij}-\ii b_{ij}$ denote its admittance and $z_{ij}= r_{ij}+\ii x_{ij}$ denote its impedance, then $y_{ij}z_{ij}=1$.

	\begin{figure}[h]
     	\centering
     	\includegraphics[scale=0.36]{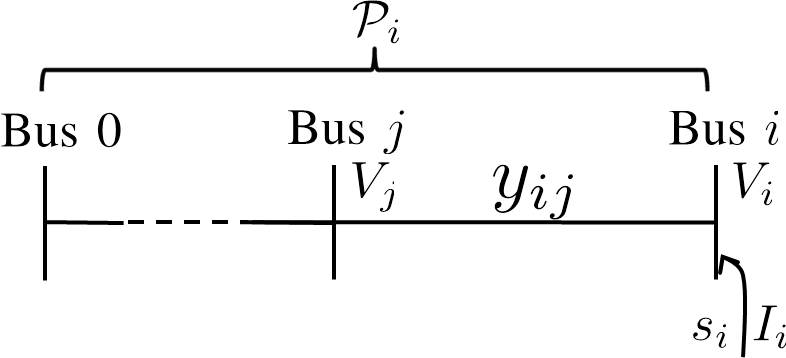}
      	\caption{Some of the notations.}
      	\label{fig: notation}
	\end{figure}

Some of the notations are summarized in Fig. \ref{fig: notation}. Further, we use a letter without subscripts to denote a vector of the corresponding quantity, e.g., $V=(V_1,\ldots,V_n)$, $y=(y_{ij},i\sim j)$. Note that subscript 0 is not included in nodal variables.

Given the network graph $(\hN, \hE)$, the admittance $y$, and the substation voltage $V_0$, then the other variables $(s,V,I,s_0)$ are described by the following physical laws.
\begin{itemize}
\item Current balance and Ohm's law:
$$I_i = \sum_{j:\,j\sim i}y_{ij} (V_i - V_j) ,  \qquad  i\in \hN;$$
\item Power balance:
$$s_i =  V_i I_i^*,  \qquad i\in \hN.$$
\end{itemize}
If we are only interested in voltages and power, then the two sets of equations can be combined into a single one
	\begin{equation}\label{PF}
	s_i = V_i\sum_{j:\,j\sim i}(V_i^*-V_j^*)y_{ij}^*, \qquad i\in \hN.
	\end{equation}
In this paper, we use \eqref{PF} to model the power flow.

\subsection{Controllable devices and control objective}
\label{sec: control elements}
Controllable devices in a distribution network include distributed generators; inverters that connect distributed generators to the grid; controllable loads like electric vehicles and smart appliances; and shunt capacitors.

Real and reactive power generation/demand of these devices can be controlled to achieve certain objectives. For example, in Volt/VAR control, reactive power injection of the inverters and shunt capacitors are controlled to regulate the voltages; in demand response, real power consumption of controllable loads are reduced or shifted in response to power supply conditions. Mathematically, power injection $s$ is the control variable, after specifying which the other variables $V$ and $s_0$ are determined by \eqref{PF}.

Constraints on the power injection $s_i$ of a bus $i\in \hN^+$ is captured by some feasible power injection set $\mathcal{S}_i$, i.e.,
	\begin{equation}\label{constraint s}
	s_i \in \mathcal{S}_i, \qquad i\in \hN^+.
	\end{equation}
The set $\mathcal{S}_i$ for some control devices are as follows.
\begin{itemize}
\item If bus $i$ has a shunt capacitor with nameplate capacity $\overline{q}_i$, then
	$$\mathcal{S}_i = \{s\in\mathbb{C}~|~\re(s)=0, ~\im(s)=0\text{ or }\overline{q}_i\}.$$
\item If bus $i$ has a solar panel with generation capacity $\overline{p}_i$, and an inverter with nameplate capacity $\overline{s}_i$, then
	$$\mathcal{S}_i = \{s\in\mathbb{C}~|~0\leq \re(s)\leq \overline{p}_i, ~|s|\leq \overline{s}_i \}.$$
\item If bus $i$ has a controllable load with constant power factor $\eta$, whose real power consumption can vary continuously from $-\overline{p}_i$ to $-\underline{p}_i$, then
	\begin{equation*}\mathcal{S}_i = \left\{s\in\mathbb{C}~\left|~
    \begin{aligned}
    & \underline{p}_i\leq \re(s)\leq \overline{p}_i,\\
    & \im(s)=\sqrt{1-\eta^2}\re(s)/\eta
    \end{aligned}
    \right. \right\}.
    \end{equation*}
\end{itemize}

The control objective in a distribution network is twofold. The first one is regulating the voltages within a range. This is captured by externally specified voltage lower and upper bounds $\underline{V}_i$ and  $\overline{V}_i$, i.e.,
    \begin{equation}\label{constraint v}
	\underline{V}_i \leq |V_i| \leq \overline{V}_i, \qquad i\in \hN^+.
	\end{equation}
For example, if 5\% voltage deviation from the nominal value is allowed, then $0.95\leq |V_i|\leq 1.05$ \cite{ANSI_C84}.

The second objective is minimizing the generation cost. For $i\in\hN,$ let $f_i(\re(s_i))$ denote the generation cost of bus $i$ where $f_i$ is a real-valued function defined on $\mathbb{R}$. Then, generation cost $C$ in the network is
    \begin{equation}\label{generation cost}
	C(s,s_0) = \sum_{i\in\hN} f_i(\re(s_i)).
	\end{equation}
We assume that $f_0$ is strictly increasing in this paper. Note that if $f_i(x)=x$ for $i\in\hN$, then $C$ is power loss in the network.

\subsection{The OPF problem}
The OPF problem seeks to minimize the generation cost \eqref{generation cost}, subject to power flow constraints \eqref{PF}, power injection constraints \eqref{constraint s}, and voltage regulation constraints \eqref{constraint v}.
    \begin{align*}
    \textbf{OPF:}~\min~~ & \sum_{i\in\hN} f_i(\re(s_i))\\
	\mathrm{over}~~ & s,V,s_0\\
	\mathrm{s.t.}~~ & s_i = V_i\sum_{j:\,j\sim i}(V_i^*-V_j^*)y_{ij}^*, \quad i\in \hN;\\
	& s_i \in \mathcal{S}_i, \quad i\in \hN^+;\\
	& \underline{V}_i \leq |V_i| \leq \overline{V}_i, \quad i\in \hN^+.
    \end{align*}
	
The challenge in solving the OPF problem comes from the nonconvex quadratic equality constraints in \eqref{PF}. To overcome this challenge, we enlarge the feasible set of OPF to a convex set. To state the convex relaxation, define
	\begin{equation}\label{phi}
	W_{ij} := V_iV_j^*, \qquad i\sim j \text{ or }i=j
	\end{equation}
and let $W:=(W_{ij},i\sim j \text{ or }i=j)$ denote the collection of all such $W_{ij}$. Define
	\begin{equation*}
	W\{i,j\} := \begin{pmatrix}
	W_{ii} & W_{ij} \\
	W_{ji} & W_{jj}
	\end{pmatrix}, \qquad i\sim j,
	\end{equation*}	
then the OPF problem can be equivalently formulated as
\begin{subequations}
\begin{align}
\textbf{OPF':}~\min~~ & \sum_{i\in\hN} f_i(\re(s_i))\nonumber\\
\mathrm{over}~~ & s,W,s_0\nonumber\\
\mathrm{s.t.}~~ & s_i = \sum_{j:\,j\sim i}(W_{ii}-W_{ij})y_{ij}^*, \quad i\in \hN;\label{BIM 1}\\
& s_i \in \mathcal{S}_i, \quad  i\in \hN^+;\label{BIM 2}\\
& \underline{V}_i^2 \leq W_{ii} \leq \overline{V}_i^2, \quad i\in \hN^+;\label{BIM 3}\\
& \rank (W\{i,j\})=1, \quad i\rightarrow j\label{rank}
\end{align}
\end{subequations}
for tree networks according to Theorem \ref{thm: equivalence}, which is proved in Appendix \ref{app: equivalence}. Theorem \ref{thm: equivalence} establishes a bijective map between the feasible set $\hF_\text{OPF}$ of the OPF problem and the feasible set $\hF_\text{OPF'}$ of the OPF' problem, that preserves the objective value. To state the theorem, for any feasible point $x=(s,V,s_0)$ of the OPF problem, define a map $\phi(x):=(s,W,s_0)$ where $W$ is defined according to \eqref{phi}.
	\begin{theorem}\label{thm: equivalence}
	 For any $x=(s,V,s_0)\in\hF_\text{OPF}$, the point $\phi(x)=(s,W,s_0)\in\hF_\text{OPF'}$. Furthermore, the map $\phi:\hF_\text{OPF}\rightarrow\hF_\text{OPF'}$ is bijective for tree networks.
	\end{theorem}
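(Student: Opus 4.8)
The plan is to prove the two assertions separately: first that $\phi$ maps $\hF_\text{OPF}$ into $\hF_\text{OPF'}$, and then that it is a bijection, the latter by explicitly constructing its inverse using the tree structure. The main work, and the only place where acyclicity is used, will be the surjectivity half.

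First I would verify well-definedness. Fix $x=(s,V,s_0)\in\hF_\text{OPF}$ and set $W_{ij}=V_iV_j^*$ as in \eqref{phi}. Constraint \eqref{BIM 2} is immediate since $\phi$ leaves $s$ unchanged, and \eqref{BIM 3} follows from \eqref{constraint v} by squaring the nonnegative magnitude bounds, because $W_{ii}=|V_i|^2$. For \eqref{BIM 1} I would substitute $W_{ii}-W_{ij}=V_i(V_i^*-V_j^*)$ and factor out $V_i$ to recover exactly the power flow law \eqref{PF}. For \eqref{rank} I would observe that
\[
W\{i,j\}=\begin{pmatrix}V_i\\V_j\end{pmatrix}\begin{pmatrix}V_i^* & V_j^*\end{pmatrix}
\]
is an outer product, hence has rank at most one, with rank exactly one because $|V_i|\geq\underline{V}_i>0$ forces $V_i\neq0$. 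The objective is preserved since it depends only on $\re(s)$.

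Next I would build an inverse $\psi:\hF_\text{OPF'}\to\hF_\text{OPF}$. Given $(s,W,s_0)\in\hF_\text{OPF'}$, I would reconstruct $V$ by propagating outward from the fixed substation voltage $V_0$ along the tree: orienting each line $i\to j$ so that $j$ is the parent of $i$, define $V_i\eqdef W_{ij}/V_j^*$, processing buses in order of increasing depth so the parent voltage $V_j$ is already available. This is well defined because $|V_j|^2=W_{jj}\geq\underline{V}_j^2>0$ (and $V_0\neq0$), so no division by zero occurs, and unambiguous precisely because on a tree each non-substation bus has a unique parent.

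The crux is then showing $\psi(s,W,s_0)\in\hF_\text{OPF}$ and that $\phi,\psi$ are mutual inverses. By induction on depth, with base case $|V_0|^2=W_{00}$, I would first establish $|V_i|^2=W_{ii}$ for every bus: the vanishing determinant from \eqref{rank} gives $W_{ii}W_{jj}=|W_{ij}|^2$, so $|V_i|^2=|W_{ij}|^2/|V_j|^2=W_{ii}$. The key remaining identity is $V_iV_j^*=W_{ij}$ on every adjacent pair; for the parent edge this holds by the definition of $V_i$, and for a child edge $k\to i$ it holds since $V_kV_i^*=W_{ki}$ yields $V_iV_k^*=W_{ki}^*=W_{ik}$, using the Hermitian property $W_{ik}=W_{ki}^*$. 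These give $V_i(V_i^*-V_j^*)=W_{ii}-W_{ij}$ at every neighbor, so \eqref{BIM 1} reproduces \eqref{PF}; \eqref{constraint v} follows from \eqref{BIM 3} together with $|V_i|^2=W_{ii}$, and \eqref{constraint s} transfers verbatim. Then $\phi\circ\psi=\mathrm{id}$ and $\psi\circ\phi=\mathrm{id}$ follow from these identities and the uniqueness of the reconstruction. I expect the consistency of this reconstruction to be the main obstacle, and it is exactly where the tree hypothesis is indispensable: verifying $V_iV_j^*=W_{ij}$ simultaneously on all edges at a bus requires that the phases propagated from the root never have to close a loop. On a network with cycles the edgewise data $\{W_{ij}\}$ need not be consistent with any global voltage phasor, so $\phi$ would fail to be surjective; acyclicity of $\hE$ is what removes this obstruction.
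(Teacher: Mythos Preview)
Your proof is correct and follows essentially the same strategy as the paper. The paper dispatches well-definedness in one line and then shows injectivity (via the observation that $V_iV_j^*=V_i'V_j'^*$ propagates $V_i=V_i'$ along edges from the anchored $V_0$) and surjectivity separately, the latter via the closed-form reconstruction
\[
V_i=\sqrt{W_{ii}}\exp\!\Bigl(\ii\Bigl(\angle V_0+\sum_{(j,k)\in\hP_i}\angle W_{jk}\Bigr)\Bigr),
\]
whereas you package both directions into an explicit two-sided inverse defined recursively by $V_i=W_{ij}/V_j^*$ along the unique path from the root. These are equivalent: unwinding your recursion and using the rank-one relation $|W_{ij}|^2=W_{ii}W_{jj}$ recovers exactly the paper's formula. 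Your treatment is more detailed on the verification side (the paper simply says ``it is not difficult to verify''), and you make explicit the role of the tree hypothesis in avoiding cycle-consistency obstructions, which the paper leaves implicit. One small remark: your appeal to $W_{ik}=W_{ki}^*$ relies on the (standard, but unstated in OPF') convention that each $W\{i,j\}$ is Hermitian; the paper's own surjectivity argument uses the same fact tacitly, so you are not assuming anything beyond what the paper does.
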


After transforming OPF to OPF', one can relax OPF' to a convex problem, by relaxing the rank constraints in \eqref{rank} to
	\begin{equation}\label{sdp}
	W\{i,j\}\succeq0, \qquad i\rightarrow j,
	\end{equation}
i.e., matrices $W\{i,j\}$ being positive semidefinite. This leads to a second-order cone programming (SOCP) relaxation \cite{sojoudi2012physics}.
    \begin{align*}
    \textbf{SOCP:}~\min~~ & \sum_{i\in\hN} f_i(\re(s_i)) \\
	\mathrm{over}~~ & s,W,s_0\\
	\mathrm{s.t.}~~ & \eqref{BIM 1}-\eqref{BIM 3};\\
	& W\{i,j\}\succeq0, \qquad i\rightarrow j.
    \end{align*}

If the solution $w=(s,W,s_0)$ of the SOCP relaxation satisfies \eqref{rank}, then $w$ is a global optimum of the OPF' problem. This motivates a definition of {\it exactness} as follows.
	\begin{definition}\label{def: exact}
	The SOCP relaxation is {\it exact} if every of its solutions satisfies \eqref{rank}.
	\end{definition}
If the SOCP relaxation is exact, then a global optimum of the OPF problem can be recovered.

\subsection{Related work}\label{sec: related work}
This paper studies the exactness of the SOCP relaxation. Before this paper, several conditions have been derived that guarantee the exactness of the SOCP relaxation \cite{Masoud11,Zhang12,Bose12,Gan12,lam2012distributed,sojoudi2012physics,Javad12}. 

It is proved in \cite{Masoud11} that the SOCP relaxation is exact if there are no lower bounds on the power injections. The results in \cite{Zhang12,Bose12} generalizes this condition.
	\begin{proposition}[\cite{Masoud11}]\label{prop: load over satisfaction}
	If $f_i$ is strictly increasing for $i\in\hN$, and there exists $\overline{p}_i$ and $\overline{q}_i$ such that
	\begin{equation}\label{restrictive S constraints}
	\mathcal{S}_i=\{s\in\mathbb{C} ~|~ \re(s)\leq\overline{p}_i,~\im(s)\leq\overline{q}_i\}
	\end{equation}
	for $i\in\hN^+$, then the SOCP relaxation is exact.
	\end{proposition}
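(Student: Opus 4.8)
The plan is to argue by contradiction using a local perturbation of a single off-diagonal variable. Suppose $w^{*}=(s^{*},W^{*},s_0^{*})$ is optimal for the SOCP relaxation but violates \eqref{rank}, so that for some line $i\to j$ the Hermitian matrix $W^{*}\{i,j\}\succeq0$ has rank two. Since it is $2\times2$ and positive semidefinite, this means $\det W^{*}\{i,j\}=W^{*}_{ii}W^{*}_{jj}-|W^{*}_{ij}|^{2}>0$. I would construct another feasible point $w'$ of the SOCP relaxation with a strictly smaller objective value, contradicting the optimality of $w^{*}$; by Definition \ref{def: exact} this forces every optimal solution to satisfy \eqref{rank}, which is exactness.

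The perturbation moves only the single entry $W_{ij}$ (together with its conjugate partner $W_{ji}=W_{ij}^{*}$), leaving every diagonal entry and every other off-diagonal entry fixed. Two structural facts make this convenient. First, because the voltage bounds \eqref{BIM 3} constrain only the diagonal entries $W_{ii}$, they are automatically preserved. Second, inspecting \eqref{BIM 1}, the entry $W_{ij}$ enters only the injection equations at buses $i$ and $j$; thus replacing $W_{ij}$ by $W_{ij}+\delta$ changes $s_i$ by $-\delta\,y_{ij}^{*}$ and $s_j$ by $-\delta^{*}y_{ij}^{*}$, and leaves $s_k$ unchanged for every other bus $k$ (including the substation, unless $j=0$). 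Since $\det W^{*}\{i,j\}>0$, a sufficiently small $\delta$ keeps $W'\{i,j\}\succeq0$ by continuity, so \eqref{sdp} is preserved.

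It then remains to pick $\delta$ so that the two changed injections stay feasible while the objective strictly drops. Writing $y_{ij}^{*}=g_{ij}+\ii b_{ij}$ with $g_{ij}>0$ and taking $\delta$ real and positive, the real parts of $s_i$ and $s_j$ each change by $-g_{ij}\delta<0$, while no other injection moves. Since $f_i,f_j$ are strictly increasing (and $f_0$ is strictly increasing when $j=0$), the objective $\sum_{i\in\hN}f_i(\re(s_i))$ strictly decreases, which is exactly where the strict monotonicity hypothesis is used. Feasibility of $w'$ with respect to the real-power bound is immediate from the one-sided structure of $\mathcal{S}_i$ in \eqref{restrictive S constraints}: it imposes only the upper bounds $\re(s)\le\overline{p}$ and $\im(s)\le\overline{q}$ and no lower bounds, so lowering $\re(s_i),\re(s_j)$ can never violate a constraint.

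I expect the main obstacle to be controlling the reactive parts $\im(s_i),\im(s_j)$ against their upper bounds $\overline{q}_i,\overline{q}_j$. Under the perturbation these move by amounts governed by $b_{ij}\,\re(\delta)$ and $g_{ij}\,\im(\delta)$, so a naive real $\delta>0$ is safe only when $b_{ij}>0$; the general case requires choosing the direction of $\delta$ in the complex plane, using the phase of $y_{ij}$ and both degrees of freedom in $W_{ij}$, so that $\re(s_i),\re(s_j)$ strictly decrease while $\im(s_i),\im(s_j)$ do not increase. Once the positivity of the line parameters is used to exhibit such a direction, the strictly-improving feasible point $w'$ contradicts optimality, and exactness of the SOCP relaxation follows.
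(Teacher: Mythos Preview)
Your proposal is correct and is essentially the same argument the paper gives (Appendix~C, Theorem~\ref{thm: local exact} and Corollary~\ref{cor: load over-satisfaction}): perturb only $W_{ij}$ by $\rho e^{\ii\alpha}$, keep all diagonals fixed, use continuity for \eqref{sdp}, and use the one-sided box $\mathcal{S}_i$ plus strict monotonicity of $f_i$ to get a strictly better feasible point. The paper parametrizes the perturbation direction by the angle $\alpha$ and phrases the reactive-power issue you anticipate as the ``well-constrained'' condition; in the load-over-satisfaction case all lower bounds are $-\infty$, so $\alpha=0$ (your real $\delta>0$) works, and since $r_{ij},x_{ij}>0$ gives $g_{ij},b_{ij}>0$, both $\re(\Delta s_i),\re(\Delta s_j),\im(\Delta s_i),\im(\Delta s_j)$ are strictly negative---your ``main obstacle'' does not actually arise here.
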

In practice, the constraint sets $\mathcal{S}_i$ take other forms than those required in Proposition \ref{prop: load over satisfaction} \cite{farivar2011optimal}.

In contrast, the conditions in \cite{Gan12} relax the restrictions on $\mathcal{S}_i$, but introduce restrictions on the voltage constraints. To state the result, for every $i\rightarrow j$, let
	\begin{eqnarray}
    \hat{S}_{ij}(p+\ii q ) &\eqdef& \hat{P}_{ij}(p)+\ii \hat{Q}_{ij}(q ) \nonumber\\
    &\eqdef& \sum_{k:\,i\in \hP_k} p_k +\ii \sum_{k:\,i\in \hP_k} q_k \label{lin S}
    \end{eqnarray}
denote the total power injection in the subtree rooted at bus $i$.
	
	\begin{proposition}[\cite{Gan12}]\label{prop: v}
	Assume $f_0$ is strictly increasing and there exists $\overline{p}_i$ and $\overline{q}_i$ such that
	\begin{equation}\label{relax constraints}
	\mathcal{S}_i\subseteq\{s\in\mathbb{C}~|~\re(s)\leq\overline{p}_i,~\im(s)\leq\overline{q}_i\}
	\end{equation}
	for $i\in\hN^+$. Then the SOCP relaxation is exact if $\overline{V}_i=\infty$ for $i\in \hN^+$ and any one of the following conditions hold:
	\begin{itemize}
	\item[(i)] $\hat{P}_{ij}(\overline{p})\leq0$ and $\hat{Q}_{ij}(\overline{q})\leq0$ for all $i\rightarrow j$.
	\item[(ii)] $r_{ij}/x_{ij}=r_{jk}/x_{jk}$ for all $i\rightarrow j$, $j\rightarrow k$.
	\item[(iii)] $r_{ij}/x_{ij}\geq r_{jk}/x_{jk}$ for all $i\rightarrow j$, $j\rightarrow k$, and $\hat{P}_{ij}(\overline{p})\leq0$ for all $i\rightarrow j$.
	\item[(iv)] $r_{ij}/x_{ij}\leq r_{jk}/x_{jk}$ for all $i\rightarrow j$, $j\rightarrow k$, and $\hat{Q}_{ij}(\overline{q})\leq0$ for all $i\rightarrow j$.
	\end{itemize}
	\end{proposition}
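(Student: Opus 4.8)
The plan is to argue by contradiction. Suppose $w=(s,W,s_0)$ is a solution of the SOCP relaxation that is not exact, so by Definition \ref{def: exact} there is a line $i\rightarrow j$ on which \eqref{sdp} is slack, $\det W\{i,j\}=W_{ii}W_{jj}-|W_{ij}|^2>0$. I would first pass to the (equivalent, on a tree) branch-flow coordinates: for each line $c\rightarrow d$ set $v_c:=W_{cc}$, let $S_{cd}:=(W_{cc}-W_{cd})y_{cd}^*$ be the sending-end power flowing toward the substation, and let $\ell_{cd}:=|y_{cd}|^2(W_{cc}-W_{cd}-W_{dc}+W_{dd})$ be the squared line current. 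A direct computation gives $v_c\ell_{cd}-|S_{cd}|^2=|y_{cd}|^2\det W\{c,d\}$, so slackness of \eqref{sdp} is exactly $v_c\ell_{cd}>|S_{cd}|^2$, i.e. ``excess loss'' on the line. Two structural identities drive everything: the flow on a line equals the aggregate injection \eqref{lin S} in the subtree below it minus the losses inside that subtree, $S_{cd}=\hat S_{cd}-\sum_{e}z_e\ell_e$, and the voltages obey the drop relation $v_c=v_d+2\,\re(z_{cd}^*S_{cd})-|z_{cd}|^2\ell_{cd}$.

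The construction is to keep every non-substation injection $s_k$, $k\in\hN^+$, fixed and to lower $\ell_{ij}$ on the slack line by a small $\delta>0$, recomputing the remaining variables from the two identities above. Holding the injections fixed forces the flows on every line on the path from $j$ to the substation to increase by exactly $z_{ij}\delta$, leaves all other flows unchanged, and makes every squared voltage nondecreasing (each term $2\delta\,\re(z_e^*z_{ij})=2\delta(r_er_{ij}+x_ex_{ij})$ accumulated along a path is nonnegative, as is the direct term $\delta|z_{ij}|^2$ seen below the slack line). Because the injections at $\hN^+$ are unchanged, constraint \eqref{BIM 2} holds and only $f_0$ contributes a change to the objective; and since the total real loss $\sum_e r_e\ell_e$ drops by $r_{ij}\delta$ while $\re(s_0)=\sum_e r_e\ell_e-\sum_{k\in\hN^+}\re(s_k)$, the substation draw $\re(s_0)$ strictly decreases, so strict monotonicity of $f_0$ makes the objective strictly smaller. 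The easy part of feasibility then checks out: the voltage upper bounds in \eqref{BIM 3} cannot be violated precisely because $\overline V_i=\infty$; the lower bounds survive because the voltages only went up; downstream of the slack line the flows are unchanged and the voltages rose, so those instances of \eqref{sdp} become even slacker; and the slack line itself stays feasible for $\delta$ small since it started strictly slack.

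The one place feasibility is genuinely in doubt — and the reason for the four alternative hypotheses — is constraint \eqref{sdp} on the lines upstream of $i\rightarrow j$, whose flows were increased. Writing $g_e:=v_{t(e)}\ell_e-|S_e|^2\ge0$ for such a line $e$ with child-end bus $t(e)$, the perturbation gives $\Delta g_e=\ell_e\,\Delta v_{t(e)}-2\delta\bigl(r_{ij}\re(S_e)+x_{ij}\im(S_e)\bigr)$, and the whole proof reduces to showing $\Delta g_e\ge0$ under each of (i)--(iv). Under (i) the bounds \eqref{relax constraints} together with $\hat P_{ij}(\overline p)\le0$ and $\hat Q_{ij}(\overline q)\le0$ for all lines force $\re(S_e)\le0$ and $\im(S_e)\le0$, so the flow term is nonnegative and $\Delta g_e\ge0$ immediately. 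Under (ii) the constant ratio $r_e/x_e$ aligns every $z_e$ in phase with $z_{ij}$, under which the voltage increase offsets the flow term and $\Delta g_e\ge0$ follows with no sign assumption on the flows. Cases (iii) and (iv) are the hybrid ones: a one-sided sign condition ($\hat P_{ij}(\overline p)\le0$, respectively $\hat Q_{ij}(\overline q)\le0$) controls one component of the flow term, while monotonicity of the ratio $r_e/x_e$ along the path is used to bound the other component against the accumulated voltage increase $\ell_e\,\Delta v_{t(e)}$.

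This last telescoping sign estimate over the upstream path is the main obstacle; the remaining steps are bookkeeping. Establishing $\Delta g_e\ge0$ in all four cases keeps $w'$ feasible for the SOCP relaxation while strictly lowering the objective, contradicting the optimality of $w$ and hence proving the relaxation exact.
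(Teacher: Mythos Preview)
Your overall plan---contradiction via a perturbation in branch-flow coordinates that lowers $\ell$ on a slack line---is the same as the paper's. The gap is in the specific perturbation. You propose to lower only $\ell_{ij}$ on the slack line and keep $\ell_e$ fixed on every upstream line, then argue that the resulting first-order change $\Delta g_e=\ell_e\,\Delta v_{t(e)}-2\delta\bigl(r_{ij}P_e+x_{ij}Q_e\bigr)$ is nonnegative under each of (i)--(iv). This works for (i), but it is false for (ii). Take a three-bus line network with $z_1=z_2=1+\ii$, the upstream line tight with $v_1=1$, $P_1=Q_1=0.1$, $\ell_1=0.02$. Then $\Delta S_1=z_2\delta$ gives $\Delta v_1=2\re(z_1^*z_2)\delta=4\delta$, while the flow term is $2\delta(r_2P_1+x_2Q_1)=0.4\delta$, so
\[
\Delta g_1=0.02\cdot 4\delta-0.4\delta=-0.32\delta<0,
\]
and the upstream SOCP constraint is violated. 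The ``voltage increase offsets the flow term'' heuristic fails precisely because $\ell_e\,\Delta v_{t(e)}$ scales like $|S_e|^2$ while the flow term scales like $|S_e|$; for small but positive flows the flow term dominates. Cases (iii) and (iv) have the same defect on the unconstrained component.

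The fix the paper uses is essential: on each upstream line one must also adjust $\ell_e$, setting $\ell_e'=\bigl(\max\{P_e'^2,P_e^2\}+\max\{Q_e'^2,Q_e^2\}\bigr)/v_e$, so feasibility of \eqref{sdp} is guaranteed by construction. The price is that $\Delta S$ no longer propagates trivially; to first order one gets a recursion $\begin{pmatrix}\Delta P_{e-1}\\\Delta Q_{e-1}\end{pmatrix}=A_e\begin{pmatrix}\Delta P_e\\\Delta Q_e\end{pmatrix}$ with $A_e=\begin{pmatrix}1-2r_eP_e^+/v_e&-2r_eQ_e^+/v_e\\-2x_eP_e^+/v_e&1-2x_eQ_e^+/v_e\end{pmatrix}$, and the substance of the proof becomes showing that $A_{j}\cdots A_{i-1}u_i>0$ componentwise for all $j\le i$. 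Each of (i)--(iv) is then an algebraic hypothesis tailored to keep these matrix products in the positive cone (case (i) makes every $A_e$ the identity; case (ii) makes $u_i$ an eigenvector of each $A_e$; cases (iii)--(iv) combine a zero column in $A_e$ with a ratio monotonicity that keeps an invariant $\alpha/\beta\ge r_i/x_i$). This matrix-product step is the part your outline is missing.
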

In distribution networks, the constraints $|V_i|\leq\overline{V}_i$ cannot be ignored, especially with distributed generators making the voltages likely to exceed $\overline{V}$.

To summarize, all sufficient conditions in literature that guarantee the exactness of the SOCP relaxation require removing some of the constraints in the OPF problem. In fact, the SOCP relaxation is in general not exact, and a 2-bus example is provided in Appendix \ref{app: non exact}. 
\section{A modified OPF problem}\label{sec: mopf}
We answer the following two questions in this section:
	\begin{itemize}
	\item Under what conditions is the SOCP relaxation exact?
	\item Can we modify the OPF problem to enforce these conditions?
	\end{itemize}
More specifically, we give a condition that ensures the exactness of the SOCP relaxation in Section \ref{sec: exact}, and show how the OPF problem can be modified to satisfy this condition in Section \ref{sec: modification}. It will be shown in Section \ref{sec: closeness} that the modification is ``small'' for three test networks.

\subsection{A sufficient condition}\label{sec: exact}
A sufficient condition that guarantees the SOCP relaxation being exact is provided in this section. The condition builds on a linear approximation of the power flow in ``the worst case''.

To state the condition, we first define the linear approximation. Define
	\begin{equation*}
	\hat{W}_{ii}(s) \eqdef W_{00} + 2\sum_{(j,k)\in \hP_i}\re\left(z_{jk}^*\hat{S}_{jk}( s)\right)
	\end{equation*}
for $i\in \hN$ and $s\in\mathbb{C}^{n}$, then $\hat{W}_{ii}(s)$ is a linear approximation of $W_{ii}=|V_i|^2$ (linear in $s$). Define
	\begin{equation}\label{PF S}
	S_{ij}:=P_{ij}+\ii Q_{ij} := (W_{ii}-W_{ij}) y_{ij}^*
	\end{equation}
as the sending-end power flow from bus $i$ to bus $j$ for $i\rightarrow j$, then $\hat{S}_{ij}(s)$ (defined in \eqref{lin S}) is a linear approximation of $S_{ij}$ (linear in $s$).

The linear approximations $\hat{W}_{ii}(s)$ and $\hat{S}_{ij}(s)$ are upper bounds on $W_{ii}$ and $S_{ij}$, as stated in Lemma \ref{lemma: v}, which is proved in Appendix \ref{app: lemma v}. To state the lemma, let $S:=(S_{ij},i\rightarrow j)$ denote the collection of power flow on all lines. For two complex numbers $a,b\in\mathbb{C}$, define the operator $\leq$ by
	$$a\leq b ~\overset{\text{def}}\Longleftrightarrow~ \re(a)\leq \re(b) \text{ and }\im(a)\leq \im(b).$$
	\begin{lemma}\label{lemma: v}
	If $(s,S,W,s_0)$ satisfies \eqref{BIM 1}, \eqref{sdp} and \eqref{PF S}, then $S_{ij}\leq \hat{S}_{ij}(s )$ for $i\rightarrow j$ and $W_{ii}\leq \hat{W}_{ii}(s)$ for $i\in \hN$.
	\end{lemma}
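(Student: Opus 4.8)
The plan is to prove both inequalities by recognizing that the gap between each actual quantity ($S_{ij}$, $W_{ii}$) and its linear approximation ($\hat{S}_{ij}(s)$, $\hat{W}_{ii}(s)$) is an accumulation of line losses, and that these losses are componentwise nonnegative precisely because of the second-order cone constraint \eqref{sdp}. Throughout I assume, as is physical for distribution lines, that $r_{ij}\geq0$ and $x_{ij}\geq0$, equivalently $g_{ij}\geq0$ and $b_{ij}\geq0$. \textbf{Step 1 (losses are nonnegative).} For $i\rightarrow j$ set $\ell_{ij}\eqdef W_{ii}+W_{jj}-2\re(W_{ij})$. The constraint $W\{i,j\}\succeq0$ forces $W_{ii},W_{jj}\geq0$ and $W_{ii}W_{jj}\geq|W_{ij}|^2$, so $\ell_{ij}\geq 2\sqrt{W_{ii}W_{jj}}-2\re(W_{ij})\geq0$. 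Using $y_{ij}=y_{ji}$ and $W_{ji}=W_{ij}^*$, a short computation gives the identity $(W_{ii}-W_{ih})y_{ih}^*=y_{ih}^*\ell_{hi}-S_{hi}$, and since $y_{ih}^*\ell_{hi}=(g_{ih}+\ii b_{ih})\ell_{hi}$ has nonnegative real and imaginary parts, the line loss $y_{ih}^*\ell_{hi}$ satisfies $y_{ih}^*\ell_{hi}\geq 0$ in the componentwise order.

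\textbf{Step 2 (flow recursion and the first inequality).} Writing \eqref{BIM 1} at bus $i$ and isolating the term toward its parent $j$ yields $S_{ij}=s_i-\sum_{h:\,h\rightarrow i}(W_{ii}-W_{ih})y_{ih}^*$; substituting the identity from Step 1 produces the recursion $S_{ij}=s_i+\sum_{h\rightarrow i}S_{hi}-\sum_{h\rightarrow i}y_{ih}^*\ell_{hi}$, and dropping the nonnegative loss terms gives $S_{ij}\leq s_i+\sum_{h\rightarrow i}S_{hi}$. I will then prove $S_{ij}\leq\hat{S}_{ij}(s)$ by induction over the tree from the leaves toward bus $0$: for a leaf, $S_{ij}=s_i=\hat{S}_{ij}(s)$; for the inductive step the subtree rooted at $i$ decomposes as $\{i\}$ together with the subtrees rooted at its children, so that $s_i+\sum_{h\rightarrow i}\hat{S}_{hi}(s)=\sum_{k:\,i\in\hP_k}s_k=\hat{S}_{ij}(s)$, and the induction hypothesis $S_{hi}\leq\hat{S}_{hi}(s)$ closes the step.

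\textbf{Step 3 (voltage drop and the second inequality).} From \eqref{PF S}, $W_{ii}-W_{ij}=z_{ij}^*S_{ij}$; taking real parts and eliminating $\re(W_{ij})$ through $\ell_{ij}$ gives the exact relation
\begin{equation*}
W_{ii}-W_{jj}=2\re(z_{ij}^*S_{ij})-\ell_{ij}\leq 2\re(z_{ij}^*S_{ij}).
\end{equation*}
Because $\re(z_{ij}^*S_{ij})=r_{ij}P_{ij}+x_{ij}Q_{ij}$ with $r_{ij},x_{ij}\geq0$, the inequality $S_{ij}\leq\hat{S}_{ij}(s)$ from Step 2 upgrades this to $W_{ii}\leq W_{jj}+2\re(z_{ij}^*\hat{S}_{ij}(s))$. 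Telescoping this one-step bound along the path $\hP_i$ from bus $i$ to bus $0$, where the boundary term is $W_{00}$, reproduces exactly $W_{ii}\leq W_{00}+2\sum_{(j,k)\in\hP_i}\re(z_{jk}^*\hat{S}_{jk}(s))=\hat{W}_{ii}(s)$.

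The main obstacle is Step 1: the entire argument rests on the line losses being componentwise nonnegative, which is not automatic for the relaxed variables $W$ since they need not be rank one. It is exactly the semidefinite constraint \eqref{sdp} (yielding $W_{ii}W_{jj}\geq|W_{ij}|^2$) together with the sign assumptions $r_{ij},x_{ij}\geq0$ that guarantees it. Once loss nonnegativity is established, Steps 2 and 3 are essentially bookkeeping on the tree, and the only point demanding care is maintaining the consistent parent/child orientation induced by $i\rightarrow j$ when splitting the balance equation and when performing the subtree and path decompositions.
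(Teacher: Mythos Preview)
Your proof is correct and follows essentially the same route as the paper's: you show the PSD constraint forces the quadratic loss term $W_{ii}-W_{ij}-W_{ji}+W_{jj}\geq 0$, derive the flow recursion $S_{ij}\leq s_i+\sum_{h\rightarrow i}S_{hi}$ and close it by leaf-to-root induction, and then telescope the voltage-drop identity along $\hP_i$. The only difference is cosmetic: you make the sign hypotheses $r_{ij},x_{ij}\geq 0$ (equivalently $g_{ij},b_{ij}\geq 0$) explicit, whereas the paper uses them silently when dropping the loss term in the flow recursion and when passing from $\re(z_{ij}^*S_{ij})$ to $\re(z_{ij}^*\hat{S}_{ij}(s))$.
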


The linear approximations $\hat{S}_{ij}(s)$ and $\hat{W}_{ii}(s)$ are close to $S_{ij}$ and $W_{ii}$ in practice. It can be verified that they satisfy
	\begin{eqnarray*}
	\hat{S}_{jk} &=& s_j+\sum_{i:\,i\rightarrow j}\hat{S}_{ij}, \qquad\qquad\! j\rightarrow k;\\
	\hat{W}_{jj} &=& \hat{W}_{ii}-2\re(z_{ij}^*\hat{S}_{ij}), \qquad i\rightarrow j,
	\end{eqnarray*}
which is called {\it Linear DistFlow model} in literature and known to approximate the exact power flow well. In fact, $\hat{S}_{ij}(s)$ and $\hat{W}_{ii}(s)$ have been widely used in literature, e.g., to study the optimal placement and sizing of shunt capacitors \cite{Baran89_capacitor_placement,Baran89_capacitor_sizing}, to minimize power loss and balance load \cite{Baran89_network_reconfiguration}, and to control reactive power injections for voltage regulation \cite{Turitsyn10}.

The sufficient condition we derive for the exactness of the SOCP relaxation is based on the linear approximations $\hat{S}_{ij}(p+\ii q)=\hat{P}_{ij}(p)+\ii \hat{Q}_{ij}(q)$ and $\hat{W}_{ii}(s)$ of the power flow. In particular, assume there exists $\overline{p}_i$ and $\overline{q}_i$ such that \eqref{relax constraints} holds for $i\in\hN^+$, then the condition depends on $\hat{P}_{ij}(\overline{p})$ and $\hat{Q}_{ij}(\overline{q})$, i.e., upper bounds on the power flow.

To state the condition, define $x^+ \eqdef \max\{ x, 0 \}$ for $x\in\mathbb{R}$, let $a_0^1=1$, $a_0^2=0$, $a_0^3=0$, $a_0^4=1$, and define
	\begin{eqnarray*}
	a_i^1 &:=& \prod_{(j,k)\in \hP_i}\left( 1-\frac{2r_{jk}\hat{P}_{jk}^+(\overline{p})}{\underline{V}_j^2} \right),\\
	a_i^2 &:=& \sum_{(j,k)\in \hP_i}\frac{2r_{jk}\hat{Q}_{jk}^+(\overline{q})}{\underline{V}_j^2} ,\\
	a_i^3 &:=& \sum_{(j,k)\in \hP_i}\frac{2x_{jk}\hat{P}_{jk}^+(\overline{p})}{\underline{V}_j^2} ,\\
	a_i^4 &:=& \prod_{(j,k)\in \hP_i}\left( 1-\frac{2x_{jk}\hat{Q}_{jk}^+(\overline{q})}{\underline{V}_j^2} \right)
	\end{eqnarray*}
for $i\in \hN^+$.
    \begin{lemma}\label{lemma: exact}
    Assume $f_0$ is strictly increasing and there exists $\overline{p}_i$ and $\overline{q}_i$ such that \eqref{relax constraints} holds for $i\in\hN^+$. Then the SOCP relaxation is exact if all of its optimal solutions $w=(s, W, s_0)$ satisfy $\hat{W}_{ii}(s)\leq \overline{V}_i^2$ for $i\in \hN^+$ and
    \begin{itemize}
	\item[\bf C1] $a_j^1r_{ij}>a_j^2x_{ij}$, $a_j^3r_{ij}<a_j^4x_{ij}$ for all $i\rightarrow j$.
	\end{itemize}
    \end{lemma}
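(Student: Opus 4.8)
The plan is to prove the statement in contrapositive form against Definition~\ref{def: exact}, by an exchange argument: if some optimal solution of the SOCP relaxation violated the rank condition \eqref{rank}, I would exhibit another feasible point of the SOCP relaxation with strictly smaller objective, contradicting optimality. The first step is to recast exactness. Each $W\{i,j\}$ is $2\times 2$ Hermitian and \eqref{sdp} forces $W\{i,j\}\succeq 0$, so $\rank(W\{i,j\})=1$ holds iff $\det W\{i,j\}=W_{ii}W_{jj}-|W_{ij}|^2=0$. It is convenient to pass to branch-flow coordinates: write $v_i:=W_{ii}$, let $S_{ij}$ be the sending-end flow \eqref{PF S}, and let $\ell_{ij}$ denote the squared magnitude of the current on line $i\rightarrow j$. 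A short computation gives the two identities $v_i\ell_{ij}-|S_{ij}|^2=|y_{ij}|^2\det W\{i,j\}$ and $\re(s_0)=\sum_{i\rightarrow j}r_{ij}\ell_{ij}-\sum_{i\in\hN^+}\re(s_i)$. The first shows the cone constraint \eqref{sdp} is $\ell_{ij}v_i\geq|S_{ij}|^2$ with equality iff rank one; the second shows the substation real draw equals the total line loss minus the fixed load. Hence a non-exact optimal solution has a line with a strictly positive gap $\ell_{ij}v_i>|S_{ij}|^2$, i.e.\ strictly positive loss slack somewhere.

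The second step is the construction. Holding every non-substation injection $s_i$, $i\in\hN^+$, fixed (so that $\sum_{i\in\hN^+}f_i(\re(s_i))$ is unchanged), I would decrease the currents $\ell_{ij}$ toward the rank-one boundary $\ell_{ij}v_i=|S_{ij}|^2$. By the second identity this strictly decreases $\re(s_0)$; since $f_0$ is strictly increasing, the objective strictly decreases. Everything then rests on showing that the modified point $w'$ stays SOCP-feasible.

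For feasibility the voltage upper bounds are essentially free: $w'$ still satisfies \eqref{BIM 1}, \eqref{sdp}, \eqref{PF S}, so Lemma~\ref{lemma: v} gives $W'_{ii}\leq\hat W_{ii}(s)$, and the hypothesis $\hat W_{ii}(s)\leq\overline V_i^2$ then yields $W'_{ii}\leq\overline V_i^2$ automatically. Reducing losses raises the voltages, which only helps the lower bounds $W'_{ii}\geq\underline V_i^2$ near the perturbed line. The genuine difficulty is that the loss reduction propagates toward the root: it shifts the flows $S_{jk}$ and voltages $v_j$ along the path $\hP_i$, and I must check that no cone constraint $\ell_{jk}v_j\geq|S_{jk}|^2$ and no lower bound $v_j\geq\underline V_j^2$ is broken on that path. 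Tracking these shifts through the Linear DistFlow recursion $\hat W_{jj}=\hat W_{ii}-2\re(z_{ij}^*\hat S_{ij})$, $\hat S_{jk}=s_j+\sum_{i\rightarrow j}\hat S_{ij}$, replacing the powers by their worst-case upper bounds $\hat P_{jk}^+(\overline p)$, $\hat Q_{jk}^+(\overline q)$ and the voltages by their lower bounds $\underline V_j^2$, is exactly what generates the path quantities $a_j^1,a_j^2,a_j^3,a_j^4$. Condition~\textbf{C1}, $a_j^1 r_{ij}>a_j^2 x_{ij}$ and $a_j^3 r_{ij}<a_j^4 x_{ij}$ for all $i\rightarrow j$, is precisely the sign condition---separating the active ($r$) and reactive ($x$) couplings---under which a strictly loss-reducing perturbation can be taken as a feasible direction.

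The hard part, and the step I expect to absorb almost all of the work, is this upstream propagation: turning the pointwise loss slack on one line into a globally feasible modification, and proving that the two inequality families in \textbf{C1} are exactly what keep every line's second-order-cone constraint intact once the flows and voltages on $\hP_i$ have moved. Granting that, optimality of the original point is contradicted, so every optimal solution must satisfy $W_{ii}W_{jj}=|W_{ij}|^2$ on all lines; by Definition~\ref{def: exact} the SOCP relaxation is exact.
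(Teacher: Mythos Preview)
Your outline is essentially the paper's own argument: pass to branch-flow variables $(s,S,\ell,v,s_0)$, fix $s$ on $\hN^+$, perturb the slack line toward the cone boundary, and show the resulting point is SOCP-feasible with strictly smaller $\re(s_0)$. You also correctly note that the upper-voltage constraint is automatic from Lemma~\ref{lemma: v} plus the hypothesis $\hat W_{ii}(s)\le\overline V_i^2$, and that the whole burden is the upstream propagation along $\hP_i$.

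One point of imprecision is worth flagging because, if left as you wrote it, it would not close. You argue ``decrease $\ell$ toward the rank-one boundary; by the identity $\re(s_0)=\sum r_{ij}\ell_{ij}-\sum_{i\in\hN^+}\re(s_i)$ this strictly decreases $\re(s_0)$.'' In the actual construction only the \emph{slack} line's $\ell$ is decreased; on every upstream line already at equality, $\ell_{jk}$ must be \emph{increased} (to $\bigl(\max\{P_{jk}'^2,P_{jk}^2\}+\max\{Q_{jk}'^2,Q_{jk}^2\}\bigr)/v_j$) so that the cone constraint survives the change in $S_{jk}$. Hence $\sum r_{ij}\Delta\ell_{ij}<0$ is not immediate from the identity; it is a \emph{consequence} of the real mechanism, which is that the perturbation satisfies $\Delta S_{jk}>0$ componentwise (both $\Delta P$ and $\Delta Q$ strictly positive) on every line of the path down to the root. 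That single chain $\Delta S>0$ simultaneously gives (i) $v_j'\ge v_j$ hence the lower-voltage bounds, (ii) with the specific $\ell'$ above, the cone constraints, and (iii) at the root, $\Delta S_0=-\Delta s_0>0$, hence the objective drop. The paper establishes $\Delta S>0$ by a first-order Taylor expansion in the perturbation size: the recursion becomes $(\Delta P_{t-1},\Delta Q_{t-1})^T=A_t(\Delta P_t,\Delta Q_t)^T+O(\epsilon^2)$ with $A_t=\begin{pmatrix}1-2r_tP_t^+/v_t & -2r_tQ_t^+/v_t\\ -2x_tP_t^+/v_t & 1-2x_tQ_t^+/v_t\end{pmatrix}$, and Condition~C1 is exactly what guarantees (via an inductive matrix lemma) that every partial product $A_j\cdots A_{i-1}(r_i,x_i)^T$ stays strictly positive. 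Your sketch names the right difficulty but does not isolate this $\Delta S>0$ invariant; that is the step you should make explicit.
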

The lemma is proved in Appendix \ref{sec: proofs}. However, the requirement $\hat{W}_{ii}(s)\leq \overline{V}_i^2$ depends on solutions of the SOCP relaxation, and cannot be checked apriori. This fact motivates us to modify the OPF problem in Section \ref{sec: modification}.

\subsection{A modified OPF problem}\label{sec: modification}
We modify the OPF problem by imposing additional constraints
	\begin{equation}\label{modify}
	\hat{W}_{ii}(s) \leq \overline{V}_i^2, \quad i\in \hN^+,
	\end{equation}
so that the requirement $\hat{W}_{ii}(s)\leq \overline{|V_i|}^2$ in Lemma \ref{lemma: exact} holds automatically. Note that the constraints \eqref{BIM 3} and \eqref{modify} can then be combined as
	$$\underline{V}_i^2 \leq W_{ii}, ~\hat{W}_{ii}(s)\leq\overline{V}_i^2, \qquad i\in N^+$$
since $W_{ii}\leq \hat{W}_{ii}(s)$ (Lemma \ref{lemma: v}).

To summarize, the modified OPF problem is
	\begin{subequations}
    \begin{align}
    \textbf{OPF-m:}~\min~~ & \sum_{i\in\hN}f_i(\re(s_i)) \nonumber\\
	\mathrm{over}~~ & s,W,s_0\nonumber\\
	\mathrm{s.t.}~~ & s_i = \sum_{j:\,j\sim i}(W_{ii}-W_{ij})y_{ij}^*, \quad i\in \hN;\label{MOPF 1}\\
	& s_i \in \mathcal{S}_i, \quad  i\in \hN^+;\label{MOPF 2}\\
	& \underline{V}_i^2 \leq W_{ii},~\hat{W}_{ii}(s)\leq\overline{V}_i^2, \quad i\in N^+;\label{MOPF 3}\\
	& \rank(W\{i,j\})=1, \quad i\rightarrow j.\label{MOPF 4}
    \end{align}
	\end{subequations}
Note that modifying the OPF problem is necessary to obtain an exact SOCP relaxation, since the SOCP relaxation is in general not exact.

Remarkably, the feasible sets of the OPF-m problem and the OPF problem are similar since $\hat{W}_{ii}(s)$ is close to $W_{ii}$, which is justified by the empirical studies in Section \ref{sec: closeness}.

The SOCP relaxation of the modified OPF problem is called SOCP-m and presented below.
    \begin{align*}
    \textbf{SOCP-m:}~\min~~ & \sum_{i\in\hN}f_i(\re(s_i)) \nonumber\\
	\mathrm{over}~~ & s,W,s_0\nonumber\\
	\mathrm{s.t.}~~ & \eqref{MOPF 1}-\eqref{MOPF 3};\\
	& W\{i,j\}\succeq0, \quad i\rightarrow j.
    \end{align*}

The main contribution of this paper is to provide a sufficient condition for the exactness of SOCP-m, that can be checked apriori and holds in practice. In particular, the condition is given in Theorem \ref{thm: exact}, which follows directly from Lemma \ref{lemma: exact}.
	\begin{theorem}\label{thm: exact}
	Assume $f_0$ is strictly increasing and there exists $\overline{p}_i$ and $\overline{q}_i$ such that \eqref{relax constraints} holds for $i\in\hN^+$. Then the SOCP-m relaxation is exact if C1 holds.
	\end{theorem}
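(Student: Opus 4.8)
The plan is to derive Theorem~\ref{thm: exact} from Lemma~\ref{lemma: exact} by noticing that the one hypothesis of that lemma which could not be checked a priori — that every optimal solution $w=(s,W,s_0)$ satisfy $\hat{W}_{ii}(s)\le\overline{V}_i^2$ for $i\in\hN^+$ — has been promoted to a hard constraint in SOCP-m. Concretely, SOCP-m is obtained from the SOCP relaxation by replacing the voltage upper bound in \eqref{BIM 3} with the constraint \eqref{modify} that appears in \eqref{MOPF 3}. Since Lemma~\ref{lemma: v} gives $W_{ii}\le\hat{W}_{ii}(s)$, imposing $\hat{W}_{ii}(s)\le\overline{V}_i^2$ automatically yields $W_{ii}\le\overline{V}_i^2$; hence every point feasible for SOCP-m is feasible for the SOCP relaxation \emph{and}, in addition, obeys $\hat{W}_{ii}(s)\le\overline{V}_i^2$. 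The two remaining hypotheses of Lemma~\ref{lemma: exact}, that $f_0$ is strictly increasing and that \eqref{relax constraints} holds, are assumed verbatim in Theorem~\ref{thm: exact}, and C1 is assumed as well, so all ingredients are in place.

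First I would recall the mechanism behind Lemma~\ref{lemma: exact}: exactness is proved by contradiction, starting from an optimal solution whose $2\times 2$ block $W\{i,j\}$ violates \eqref{rank} for some line $i\to j$ (equivalently $W\{i,j\}\succ 0$, so $W_{ii}W_{jj}>|W_{ij}|^2$) and constructing a second feasible point $w'$ with strictly smaller objective. Under C1 this construction pushes the off-diagonal entries toward the rank-one boundary while maintaining \eqref{BIM 1}--\eqref{BIM 3} and \eqref{sdp}, and it lowers the slack injection so that the objective strictly drops, with strict monotonicity of $f_0$ supplying the strict decrease. To transport this argument to SOCP-m it suffices to verify that the constructed $w'$ is feasible not merely for the SOCP relaxation but for SOCP-m, i.e. that $w'$ still satisfies \eqref{modify}.

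The step I expect to be the main obstacle is precisely this feasibility check. Here the key structural observation is that $\hat{W}_{ii}(s)$ is a function of the non-substation injections $s=(s_1,\dots,s_n)$ alone — it enters only through the cumulative quantities $\hat{S}_{jk}(s)$ along the path $\hP_i$ — and does not involve $W$ or the slack injection $s_0$. Consequently, if the perturbation of Lemma~\ref{lemma: exact} holds the injections $s_i$ (for $i\in\hN^+$) fixed and realizes its improvement purely by reducing losses absorbed at the substation, then $\hat{W}_{ii}(s')=\hat{W}_{ii}(s)$ and \eqref{modify} is inherited with nothing to prove; otherwise one must instead check that the perturbation does not increase the surrogate, $\hat{W}_{ii}(s')\le\hat{W}_{ii}(s)\le\overline{V}_i^2$, which is governed by exactly the sign-definite monotonicity encoded in the inequalities of C1. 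Granting this, any non-rank-one optimal solution of SOCP-m could be strictly improved within the feasible set of SOCP-m, contradicting optimality; hence every optimal solution satisfies \eqref{rank}, which is the definition of SOCP-m being exact. Everything else reduces to transferring the standing assumptions and invoking Lemma~\ref{lemma: exact}.
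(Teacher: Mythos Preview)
Your approach is correct and matches the paper's, which simply states that Theorem~\ref{thm: exact} ``follows directly from Lemma~\ref{lemma: exact}.'' Your hedging in the last paragraph is unnecessary: the construction in the proof of Lemma~\ref{lemma: exact} (Algorithm~\ref{algorithm}/\ref{algorithm tree}) explicitly sets $s'\leftarrow s$, so $\hat{W}_{ii}(s')=\hat{W}_{ii}(s)$ and constraint~\eqref{modify} is inherited automatically---the ``otherwise'' branch never arises.
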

	
C1 can be checked apriori since it does not depend on the solutions of the SOCP-m relaxation. In fact, $\{a_j^k\}_{j\in \hN, k=1,2,3,4}$ are functions of $(r,x,\overline{p},\overline{q},\underline{V})$ that can be computed in $O(n)$ time, therefore the complexity of checking C1 is $O(n)$.

C1 requires $\overline{p}$ and $\overline{q}$ to be ``small''. Fix $(r,x,\underline{V})$, then C1 is a condition on $(\overline{p},\overline{q})$. It can be verified that if $(\overline{p},\overline{q}) \leq (\overline{p}',\overline{q}')$ componentwise, then
    $$\text{C1 holds for }(\overline{p}',\overline{q}') ~\Rightarrow~ \text{C1 holds for }(\overline{p},\overline{q}),$$
i.e., the smaller the power injections, the more likely C1 holds. In particular, it can be verified that if $(\overline{p},\overline{q})\leq(0,0)$, i.e., there is no distributed generation, then C1 holds as long as $(r,x)>0$ componentwise.

As will be seen in the empirical studies in Section \ref{sec: widely hold}, C1 holds for three test networks, including those with high penetration of distributed generation, i.e., big $(\overline{p},\overline{q})$.


\subsection{Uniqueness of solutions}
If the solution of the SOCP-m relaxation is unique, then any convex programming solver will obtain the same solution.
	\begin{theorem}\label{thm: unique}
	If $f_i$ is convex for $i\in \hN$; $\mathcal{S}_i$ is convex for $i\in\hN^+$; and the SOCP-m relaxation is exact, then the SOCP-m relaxation has at most one solution.
	\end{theorem}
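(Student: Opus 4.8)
The plan is to exploit the convexity of the SOCP-m feasible set together with exactness. First I would observe that SOCP-m is a genuine convex program: the objective $\sum_{i\in\hN}f_i(\re(s_i))$ is convex because each $f_i$ is convex and $\re(s_i)$ is an affine function of the decision variables; constraint \eqref{MOPF 1} is affine, $\hat{W}_{ii}(s)$ is affine in $s$ so \eqref{MOPF 3} is affine, the set constraints \eqref{MOPF 2} are convex by hypothesis, and each $W\{i,j\}\succeq0$ is a convex (second-order cone) constraint. Hence the set of optimal solutions is convex: if $w^{(1)}=(s^{(1)},W^{(1)},s_0^{(1)})$ and $w^{(2)}=(s^{(2)},W^{(2)},s_0^{(2)})$ are both optimal, then so is the midpoint $\bar{w}=\tfrac12(w^{(1)}+w^{(2)})$.

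Next I would bring in exactness. Since SOCP-m is exact, every optimal solution satisfies \eqref{MOPF 4}; in particular $W^{(1)}\{i,j\}$, $W^{(2)}\{i,j\}$, and $\bar{W}\{i,j\}=\tfrac12(W^{(1)}\{i,j\}+W^{(2)}\{i,j\})$ are all rank-one positive semidefinite for every line $i\rightarrow j$. Because the voltage lower bounds are positive, $W_{ii}\geq \underline{V}_i^2>0$, so each of these $2\times2$ Hermitian matrices is a nonzero rank-one matrix and factors as an outer product $W^{(k)}\{i,j\}=\xi^{(k)}(\xi^{(k)})^{*}$ with $\xi^{(k)}=(V_i^{(k)},V_j^{(k)})^{\!\top}$, where $V^{(k)}$ is the voltage profile associated with the rank-one solution through the bijection of Theorem \ref{thm: equivalence} (this applies since $W_{ii}\leq\hat{W}_{ii}(s)\leq\overline{V}_i^2$ by Lemma \ref{lemma: v}, so the rank-one solution is feasible for OPF'). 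The key linear-algebra fact is that the range of a sum of PSD matrices equals the sum of their ranges, so $\bar{W}\{i,j\}$ can have rank one only if $\xi^{(1)}$ and $\xi^{(2)}$ are parallel. Thus on every line the complex ratio $V_i/V_j=W_{ij}/W_{jj}$ coincides for the two solutions.

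I would then propagate this rigidity through the tree. At the substation $V_0^{(1)}=V_0^{(2)}=V_0$ is fixed and nonzero; for any child $i$ of bus $0$, the parallelism on the line $i\rightarrow0$ forces the proportionality constant to be $1$, hence $V_i^{(1)}=V_i^{(2)}$. Proceeding by induction from the root down the tree, on each line $i\rightarrow j$ with $V_j^{(1)}=V_j^{(2)}\neq0$ already established, the parallelism again forces the constant to be $1$, so $V_i^{(1)}=V_i^{(2)}$. Connectedness of the tree yields $V^{(1)}=V^{(2)}$, whence $W^{(1)}=W^{(2)}$, and finally \eqref{MOPF 1} together with the power balance at bus $0$ gives $s^{(1)}=s^{(2)}$ and $s_0^{(1)}=s_0^{(2)}$. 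This contradicts $w^{(1)}\neq w^{(2)}$, proving that the solution is unique.

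The main obstacle is that the rank constraint \eqref{MOPF 4} carves out a nonconvex set, so convexity of the optimal set alone cannot transfer uniqueness; the crux is to convert the \emph{forced} rank-one-ness of the midpoint into the rigid parallelism of the two factorizations, and then to globalize this edgewise relation using the tree structure and the fixed substation voltage. Care is also needed to exclude degeneracies, which is handled by the positivity of the voltage lower bounds guaranteeing $W_{ii}>0$ at every bus.
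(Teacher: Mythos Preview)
Your argument is correct and follows the same overall strategy as the paper: use convexity to place the midpoint among the optimizers, invoke exactness to make the midpoint rank-one on every edge, extract an edgewise rigidity from that, and propagate from the fixed substation through the tree. The difference is purely in how the edgewise rigidity is obtained. You pass through the voltage factorization of Theorem~\ref{thm: equivalence} and use the linear-algebraic fact that a rank-one sum of two nonzero PSD matrices forces their ranges to coincide, hence $(V_i^{(1)},V_j^{(1)})\parallel(V_i^{(2)},V_j^{(2)})$. The paper stays entirely in the $W$ variables: expanding $W_{ii}(\theta)W_{jj}(\theta)=|W_{ij}(\theta)|^2$ yields $\tilde{W}_{ii}\hat{W}_{jj}+\hat{W}_{ii}\tilde{W}_{jj}=\tilde{W}_{ij}\hat{W}_{ji}+\hat{W}_{ij}\tilde{W}_{ji}$, and a short chain of inequalities (triangle inequality plus AM--GM) then forces $\angle\tilde{W}_{ij}=\angle\hat{W}_{ij}$ and $\hat{W}_{ii}/\tilde{W}_{ii}=\hat{W}_{jj}/\tilde{W}_{jj}$, after which the root condition $\hat{W}_{00}=\tilde{W}_{00}$ propagates equality of $W$ through the tree. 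Your route is a bit more conceptual and makes the role of the tree/root explicit via voltages, at the cost of invoking Theorem~\ref{thm: equivalence} and Lemma~\ref{lemma: v} to justify the factorization; the paper's route is more self-contained and never leaves the $(s,W,s_0)$ coordinates. Both arrive at $W^{(1)}=W^{(2)}$, from which $s^{(1)}=s^{(2)}$ and $s_0^{(1)}=s_0^{(2)}$ follow from \eqref{MOPF 1}.
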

The theorem is proved in Appendix \ref{app: thm unique}.
	
\section{Case Studies}\label{sec: case study}
In this section we use three test networks to demonstrate the following two arguments made in Section \ref{sec: mopf}:
\begin{enumerate}
\item the feasible sets of the OPF problem and the OPF-m problem are close;
\item Condition C1 holds.
\end{enumerate}

\subsection{Test networks}
\label{sec: test networks}
We consider three test networks: the IEEE 13-bus test network \cite{IEEE} and two real-world networks in the service territory of Southern California Edison (SCE), a utility company in Southern California \cite{SCE}.

The IEEE 13-bus test network is an unbalanced three-phase network with unmodeled devices including regulators, circuit switches, and split transformers. It is adjusted as follows to be modeled by the power flow model in \eqref{PF}.
\begin{enumerate}
\item Assume that each bus has three phases and split the load uniformly among the three phases.
\item Assume that the three phases are decoupled so that the network becomes three identical single phase networks.
\item Assume that circuit switches are in their normal operating states, think of regulators as substations since they have fixed voltages, ignore split transformers and place the corresponding load at the primary side.
\end{enumerate}

We also consider two real-world networks, a 47-bus network and a 56-bus network, in the service territory of SCE. Both networks have high penetration of distributed generation. Their topologies are shown in Fig. \ref{fig:circuit}, and their parameters are summarized in Table \ref{table:data} and \ref{table: 56} respectively.
	\begin{figure*}[!htbp]
    \centering
    \includegraphics[scale=0.42]{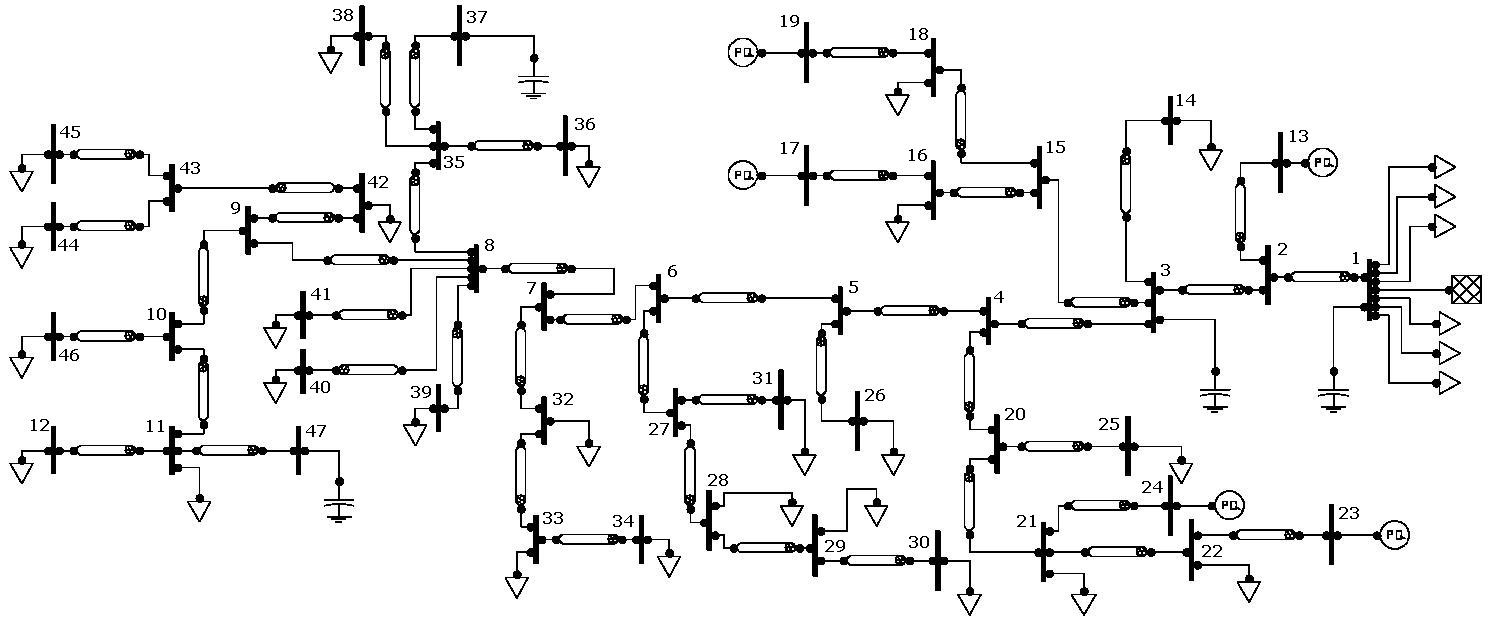}
    \includegraphics[scale=0.42]{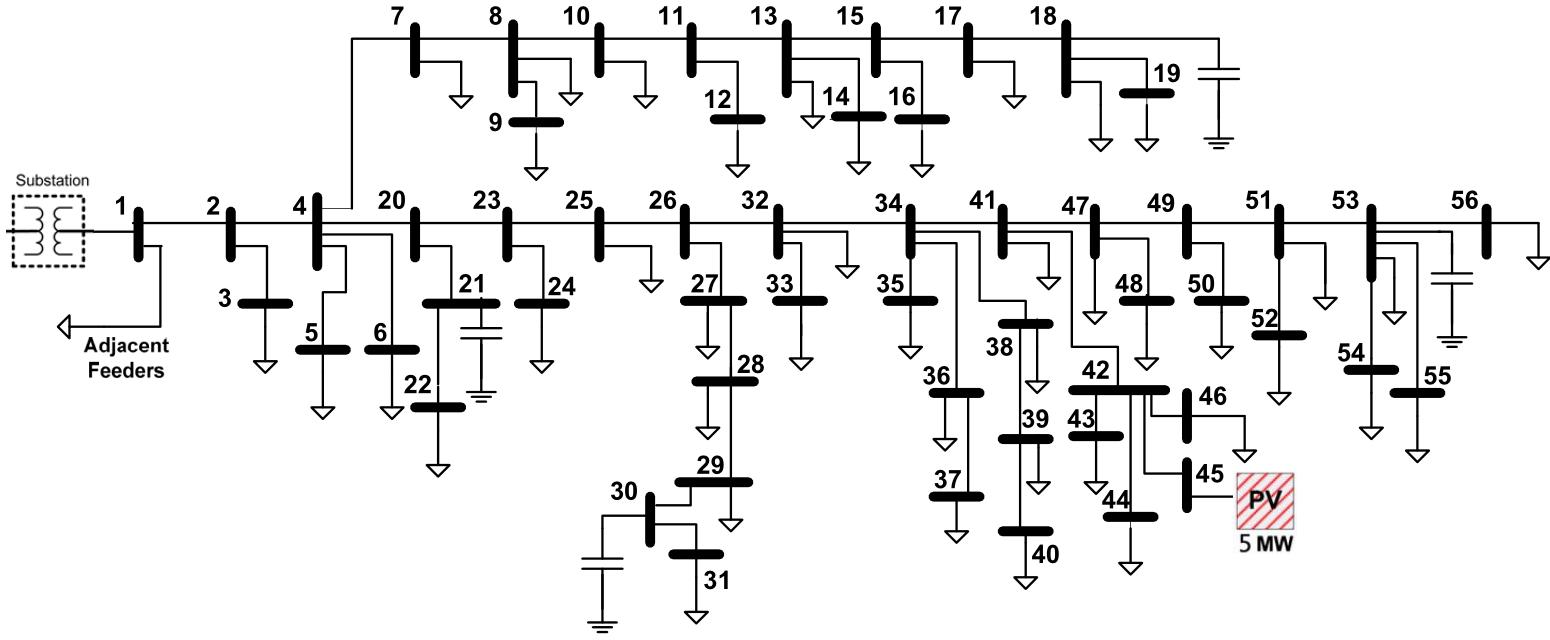}
    \caption{Topologies of the 47-bus and 56-bus SCE test networks \cite{Masoud11,farivar2011optimal}.}
    \label{fig:circuit}
	\end{figure*}
	

\begin{table*}
\caption{Line impedances, peak spot load, and nameplate ratings of capacitors and PV generators of the 47-bus network.}
\centering
\scriptsize
\begin{tabular}{|p{0.42cm}|p{0.3cm}|m{0.43cm}|b{0.43cm}   |p{0.42cm}|p{0.3cm}|p{0.43cm}|p{0.43cm}    |p{0.42cm}|p{0.3cm}|p{0.43cm}|p{0.43cm} |p{0.3cm}|p{0.35cm}|c|c|c|c|}
\hline
\multicolumn{18}{|c|}{Network Data}\\
\hline
\multicolumn{4}{|c|}{Line Data}& \multicolumn{4}{|c|}{Line Data}& \multicolumn{4}{|c|}{Line Data}& \multicolumn{2}{|c|}{Load Data}& \multicolumn{2}{|c|}{Load Data}&\multicolumn{2}{|c|}{PV Generators}\\
\hline
From & To & R & X & From & To & R & X & From & To & R & X& Bus& Peak & Bus& Peak & Bus&Nameplate \\
Bus & Bus &$(\Omega)$& $(\Omega)$ & Bus & Bus & $(\Omega)$ & $(\Omega)$ & Bus & Bus & $(\Omega)$ & $(\Omega)$ & No &  MVA& No & MVA& No & Capacity\\
\hline
1	&	2	&	0.259	&	0.808	&	8	&	41	&	0.107	 &	0.031	&	 21	&	22	 &	 0.198	 &	 0.046	 &	1	&	30	&	34	&	0.2	&				 &		 \\	
2	&	13	&	0	&	0	&	8	&	35	&	0.076	 &	0.015	&	22	&	 23	&	0	 &	 0	 &	 11	&	 0.67	 &	36	&	0.27	&			 13	&	 1.5MW	\\	
2	&	3	&	0.031	&	0.092	&	8	&	9	&	0.031	 &	0.031	&	 27	&	31	 &	 0.046	 &	 0.015	 &	12	&	0.45	 &	38	&	0.45	 &			 17	&	 0.4MW	 \\	
3	&	4	&	0.046	&	0.092	&	9	&	10	&	0.015	 &	0.015	&	 27	&	28	 &	 0.107	 &	 0.031	 &	14	&	0.89	 &	39	&	1.34	 &			 19	&	 1.5 MW	 \\	
3	&	14	&	0.092	&	0.031	&	9	&	42	&	0.153	 &	0.046	&	 28	&	29	 &	 0.107	 &	 0.031	 &	16	&	0.07	 &	40	&	0.13	 &			 23	&	 1 MW	 \\	
3	&	15	&	0.214	&	0.046	&	10	&	11	&	0.107	 &	0.076	&	 29	&	30	 &	 0.061	 &	 0.015	 &	18	&	0.67	 &	41	&	0.67	 &			 24	&	 2 MW	 \\	
4	&	20	&	0.336	&	0.061	&	10	&	46	&	0.229	 &	0.122	&	 32	&	33	 &	 0.046	 &	 0.015	 &	21	&	0.45	 &	42	&	0.13	 &			 &		 \\\cline{17-18}		
4	&	5	&	0.107	&	0.183	&	11	&	47	&	0.031	 &	0.015	&	 33	&	34	 &	 0.031	 &	0.010	 &	22	&	2.23	 &	44	&	0.45	&			 \multicolumn{2}{c|}{Shunt Capacitors} 			 \\  \cline{17-18}	
5	&	26	&	0.061	&	0.015	&	11	&	12	&	0.076	 &	0.046	&	 35	&	36	 &	 0.076	 &	 0.015	 &	25	&	0.45	 &	45	&	0.2	&			 \multicolumn{1}{c|}{Bus} &	 \multicolumn{1}{c|}{Nameplate}	\\		
5	&	6	&	0.015	&	0.031	&	15	&	18	&	0.046	 &	0.015	&	 35	&	37	 &	 0.076	 &	 0.046	 &	26	&	0.2	 &	46	&	0.45	&			 \multicolumn{1}{c|}{No.} &	 \multicolumn{1}{c|}{Capacity}	\\ \cline{15-18}		 
6	&	27	&	0.168	&	0.061	&	15	&	16	&	0.107	 &	0.015	&	 35	&	38	 &	 0.107	 &	 0.015	 &	28	&	0.13	 &	 \multicolumn{2}{c|}{ } 		&		&		 \\				
6	&	7	&	0.031	&	0.046	&	16	&	17	&	0	 &	0	&	42	&	 43	&	 0.061	 &	 0.015	&	 29	&	0.13	 &	 \multicolumn{2}{c|}{Base Voltage (kV) = 12.35}		 &	1	 &	 6000 kVAR	\\				
7	&	32	&	0.076	&	0.015	&	18	&	19	&	0	 &	0	&	43	&	 44	&	 0.061	 &	 0.015	&	 30	&	0.2	 &	\multicolumn{2}{c|}{Base kVA = 1000} 	 &	3	&	 1200 kVAR	 \\					
7	&	8	&	0.015	&	0.015	&	20	&	21	&	0.122	 &	0.092	&	 43	&	45	 &	 0.061	 &	 0.015	 &	31	&	0.07	 &	 \multicolumn{2}{c|}{Substation Voltage = 12.35}    &	 37 &		 1800 kVAR	 \\						
8	&	40	&	0.046	&	0.015	&	20	&	25	&	0.214	 &	0.046	&		 &		 &		 &		 &	 32	&	0.13	 &	\multicolumn{2}{c|}{}	 &	47		 & 1800 kVAR  \\						
8	&	39	&	0.244	&	0.046	&	21	&	24	&	0	 &	0	&		&		 &		 &		 &	 33	&	 0.27	 &	\multicolumn{2}{c|}{}	& & \\
\hline
\end{tabular}
\label{table:data}
\end{table*}

\begin{centering}
\begin{table*}

\caption{Line impedances, peak spot load, and nameplate ratings of capacitors and PV generators
of the 56-bus network.}
\centering
\scriptsize
\begin{tabular}{|c|c|c|c|c|c|c|c|c|c|c|c|c|c|c|c|c|c|}
\hline
\multicolumn{18}{|c|}{Network Data}\\
\hline
\multicolumn{4}{|c|}{Line Data}& \multicolumn{4}{|c|}{Line Data}& \multicolumn{4}{|c|}{Line Data}& \multicolumn{2}{|c|}{Load Data}& \multicolumn{2}{|c|}{Load Data}&\multicolumn{2}{|c|}{Load Data}\\
\hline
From&To&R&X&From&To& R& X& From& To& R& X& Bus& Peak & Bus& Peak &  Bus &Peak  \\
Bus.&Bus.&$(\Omega)$& $(\Omega)$ & Bus. & Bus. & $(\Omega)$ & $(\Omega)$ & Bus.& Bus.& $(\Omega)$ & $(\Omega)$ & No.&  MVA& No.& MVA& No.& MVA\\
\hline

1	&	2	&	0.160	&	0.388	&	20	&	21	&	0.251	&	0.096	&	 39	&	40	 &	 2.349	 &	 0.964	&	3	&	0.057	&	29  &	0.044  & 52& 0.315	 \\
2	&	3	&	0.824	&	0.315	&	21	&	22	&	1.818	&	0.695	&	 34	&	41	 &	 0.115	 &	 0.278	&	5	&	0.121	&	31	&	0.053  & 54& 	 0.061	 \\
2	&	4	&	0.144	&	0.349	&	20	&	23	&	0.225	&	0.542	&	 41	&	42	 &	 0.159	 &	 0.384	&	6	&	0.049	&	32	&	0.223 & 55&	 0.055	 \\
4	&	5	&	1.026	&	0.421	&	23	&	24	&	0.127	&	0.028	&	 42	&	43	 &	 0.934	 &	 0.383	&	7	&	0.053	&	33	&	0.123 & 56&	 0.130	 \\\cline{17-18}
4	&	6	&	0.741	&	0.466   &	23	&	25	&	0.284	&	0.687	&	 42	&	44	 &	 0.506	 &	 0.163	&	8	&	0.047	&	34	&	0.067 & \multicolumn{2}{c|}{Shunt Cap}	 \\\cline{17-18}
4	&	7	&	0.528	&	0.468	&	25	&	26	&	0.171	&	0.414	&	 42	&	45	 &	 0.095	 &	 0.195	&	9	&	0.068	&	35	&	0.094&   \multicolumn{1}{c|}{Bus} &	 \multicolumn{1}{c|}{Mvar}			 \\\cline{17-18}
7	&	8	&	0.358	&	0.314	&	26	&	27	&	0.414	&	0.386	&	 42	&	46	 &	 1.915	 &	 0.769	&	10	&	0.048	&	36	&	0.097&  19& 	 0.6 	 \\
8	&	9	&	2.032	&	0.798	&	27	&	28	&	0.210	&	0.196	&	 41	&	47	 &	 0.157	 &	 0.379	&	11	&	0.067	&	37	&	0.281&  21&	 0.6 	 \\
8	&	10	&	0.502	&	0.441	&	28	&	29	&	0.395	&	0.369	&	 47	&	48	 &	 1.641	 &	 0.670	&	12	&	0.094	&	38	&	0.117&  30&	 0.6 		 \\
10	&	11	&	0.372	&	0.327	&	29	&	30	&	0.248	&	0.232	&	 47	&	49	 &	 0.081	 &	 0.196	&	14	&	0.057	&	39	&	0.131& 53&	 0.6 		 \\\cline{17-18}
11	&	12	&	1.431	&	0.999	&	30	&	31	&	0.279	&	0.260	&	 49	&	50	 &	 1.727	 &	 0.709	&	16	&	0.053	&	40	&	0.030& \multicolumn{2}{c|}{Photovoltaic}		 \\\cline{17-18}
11	&	13	&	0.429	&	0.377	&	26	&	32	&	0.205	&	0.495	&	 49	&	51	 &	 0.112	 &	 0.270	&	17	&	0.057	&	41	&	0.046& \multicolumn{1}{c|}{Bus} &	 \multicolumn{1}{c|}{Capacity}		 \\\cline{17-18}
13	&	14	&	0.671	&	0.257	&	32	&	33	&	0.263	&	0.073	&	 51	&	52	 &	 0.674	 &	 0.275	&	18	&	0.112	&	42	&	0.054&   & \\
13	&	15	&	0.457	&	0.401	&	32	&	34	&	0.071	&	0.171	&	 51	&	53	 &	 0.070	 &	 0.170	&	19	&	0.087	&	43	&	0.083&   45 &		 5MW		 \\\cline{17-18}
15	&	16	&	1.008	&	0.385	&	34	&	35	&	0.625	&	0.273	&	 53	&	54	 &	 2.041	 &	 0.780	&	22	&	0.063	&	44	&	0.057&  \multicolumn{2}{c|}{} \\
15	&	17	&	0.153	&	0.134	&	34	&	36	&	0.510	&	0.209	&	 53	&	55	 &	 0.813	 &	 0.334	&	24	&	0.135	&	46	&	0.134&  \multicolumn{2}{c|}{$V_\textrm{base}$ = 12kV}	 \\
17	&	18	&	0.971	&	0.722	&	36	&	37	&	2.018	&	0.829	&	 53	&	56	 &	 0.141	 &	 0.340	&	25	&	0.100	&	47	&	0.045& \multicolumn{2}{c|}{$S_\textrm{base}$ = 1MVA} 	 \\
18	&	19	&	1.885	&	0.721	&	34	&	38	&	1.062	&	0.406	&		 &		 &		 &		 &	27	&	0.048	&	48	&	0.196&  \multicolumn{2}{c|}{$Z_\textrm{base}= 144 \Omega$ }		 \\
4	&	20	&	0.138	&	0.334	&	38	&	39	&	0.610	&	0.238	&		 &		 &		 &		 &	28	&	0.038	&	50	&	0.045 &  \multicolumn{2}{c|}{}		 \\
		
\hline

\end{tabular}
\label{table: 56}
\end{table*}
\end{centering}

The three networks have increasing penetration of distributed generation. While the IEEE 13-bus network does not have any distributed generation (0\% penetration), the SCE 47-bus network has $6.4$MW nameplate distributed generation capacity (over 50\% penetration in comparison with $11.3$MVA peak spot load) \cite{Masoud11}, and the SCE 56-bus network has $5$MW nameplate distributed generation capacity (over 100\% penetration in comparison with $3.835$MVA peak spot load) \cite{farivar2011optimal}.

\subsection{Feasible sets of OPF-m and OPF' are similar}
\label{sec: closeness}
We show that the feasible sets of the OPF-m problem and the OPF' problem are similar for all three test networks in this section. More specifically, we show that the OPF-m problem eliminates some feasible points of the OPF' problem that are close to the voltage upper bounds.

To state the results, we define a measure that will be used to evaluate the difference between the feasible sets of the OPF' problem and the OPF-m problem. It is claimed in \cite{chiang1990existence} that given $V_0$ and $s$, there exists a unique voltage $V(s)$ near the nominal value that satisfies the power flow \eqref{PF}. Define
	\begin{equation*}
	\varepsilon := \max\left\{\hat{W}_{ii}(s)-|V_i(s)|^2~|~
	s \text{ satisfies }\eqref{constraint s},~i\in \hN\right\}
	\end{equation*}
as the maximum deviation of $\hat{W}_{ii}(s)$ from $W_{ii}(s)=|V_i(s)|^2$. It follows from Lemma \ref{lemma: v} that $\hat{W}_{ii}(s)\geq W_{ii}(s)$ for all $s$ and all $i\in\hN$, therefore $\varepsilon\geq0$.

The value ``$\varepsilon$'' serves as a measure of the difference between the feasible sets of the OPF-m problem and the OPF' problem for the following reason. Consider the OPF' problem with stricter voltage upper bound constraints $W_{ii}\leq\overline{V}_i^2-\varepsilon$:
	\begin{eqnarray*}
	\textbf{OPF'-$\varepsilon$: }\min && \sum_{i\in\hN}f_i(\re(s_i)) \\
	\mathrm{over}  & & s,W,s_0\nonumber\\
	\mathrm{s.t.} && \eqref{BIM 1},\eqref{BIM 2}, \eqref{rank};\\
	&& \underline{V}_i^2 \leq W_{ii} \leq \overline{V}_i^2-\varepsilon, \quad i\in \hN^+.
	\end{eqnarray*}
Then it follows from
	\begin{equation*}
	W_{ii}(s)\leq\overline{V}_i^2-\varepsilon ~~ \Longrightarrow ~~ \hat{W}_{ii}(s)\leq\overline{V}_i^2,
	\qquad i\in \hN^+
	\end{equation*}
that the feasible set $\hF_{\text{OPF-'}\varepsilon}$ of OPF'-$\varepsilon$ is contained in the feasible set $\hF_\text{OPF-m}$ of OPF-m. Furthermore, we know that the feasible set of OPF-m is contained in the feasible set of OPF'. Hence, $$\hF_{\text{OPF'-}\varepsilon}\subseteq\hF_\text{OPF-m}\subseteq\hF_\text{OPF'}.$$

To summarize, OPF-m is ``sandwiched'' between OPF' and OPF'-$\varepsilon$ as illustrated in Fig. \ref{fig: MOPF}. If $\varepsilon$ is small, then $\hF_\text{OPF-m}$ is similar to $\hF_\text{OPF'}$.
	\begin{figure}[!htbp]
     	\centering
     	\includegraphics[scale=0.5]{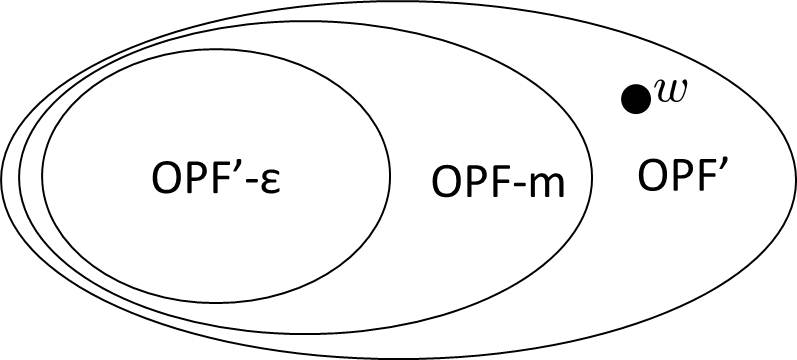}
      	\caption{Feasible sets of OPF'-$\varepsilon$, OPF-m, and OPF'. The point $w$ is feasible for OPF' but not for OPF-m. }
      	\label{fig: MOPF}
	\end{figure}

Moreover, if $\varepsilon$ is small, then any point $w$ that is feasible for OPF' but infeasible for OPF-m is close to the voltage upper bound since $W_{ii}>\overline{V}_i^2-\varepsilon$ for some $i\in \hN^+$. Such points are perhaps undesirable for robust operation.

Now we show that $\varepsilon$ is small for all three test networks. In the numerical studies, we assume that the substation voltage is fixed at the nominal value, i.e., $W_{00}=1$, and that the voltage upper and lower bounds are $\overline{V}_i=1.05$ and $\underline{V}_i=0.95$ for $i\in \hN^+$.

To evaluate $\varepsilon$ for the IEEE 13-bus network, we further assume that $\overline{p}=\underline{p}$, $\overline{q}=\underline{q}$, and that they equal the values specified in the IEEE test case documents. In this setup, $\varepsilon=0.0043$. Therefore the voltage constraints are $0.9025\leq W_{ii}\leq 1.1025$ for OPF' and $0.9025\leq W_{ii}\leq 1.0982$ for OPF'-$\varepsilon$.
    \begin{table}[!h]
	\caption{Closeness of OPF-m and OPF}
	\label{table: epsilon}
	\begin{center}
        \begin{tabular}{| c | c |}
        \hline
        & $\varepsilon$ \\
        \hline
        IEEE 13-bus & 0.0043 \\
        SCE 47-bus & 0.0031 \\
        SCE 56-bus & 0.0106 \\
        \hline
        \end{tabular}
        \end{center}
        \end{table}

To evaluate $\varepsilon$ for the SCE test networks, we further assume that all loads draw peak spot apparent power at power factor 0.97, that all shunt capacitors are switched on, and that distributed generators generate real power at their nameplate capacities with zero reactive power (these assumptions enforce $\underline{p}=\overline{p}$, $\underline{q}=\overline{q}$, and simplify the calculation of $\varepsilon$). The values of $\varepsilon$ are summarized in Table \ref{table: epsilon}. For example, $\varepsilon=0.0031$ for the SCE 47-bus network. Therefore the voltage constraints are
$0.9025\leq W_{ii}\leq 1.1025$ for OPF' and $0.9025\leq W_{ii}\leq 1.0994$ for OPF'-$\varepsilon$.

\subsection{Condition C1 holds}
\label{sec: widely hold}
We show that Condition C1 holds for all three test networks in this section.

To present the results, we transform C1 to a compact form. To state the compact form, first define\footnote{If $a^3_i=0$ for some $i\in \hN$, then set $\overline{b}_i=\infty$. In practice, $a^1_i\approx1$.}
	\begin{equation*}
	\underline{b}_i :=  \frac{a^2_i}{a^1_i},~\overline{b}_i :=  \frac{a^4_i}{a^3_i}
	\end{equation*}
for $i\in \hN$, then C1 is equivalent to
	\begin{equation}\label{condition}
	\frac{r_{ij}}{x_{ij}}  \in\left(\underline{b}_j,\overline{b}_j\right),
		\qquad i\rightarrow j.
	\end{equation}
We have checked that \eqref{condition} holds for all three test networks.

To better present the result, note that \eqref{condition} is implied by
    \begin{equation}\label{display}
	\conv\left(\left\{\frac{r_{ij}}{x_{ij}},i\rightarrow j\right\}\right) \quad \subseteq \quad \underset{j\in \hN}{\cap}\left(\underline{b}_j,\overline{b}_j\right)
	\end{equation}
where $\conv(A)$ denotes the convex hull of a set $A$. In the rest of this section, we focus on \eqref{display} since it only involves 2 intervals and is therefore easier to present.

We call the left hand side of \eqref{display} the {\it range of $r/x$} and the right hand side the {\it minimum interval}. The calculation of ranges of $r/x$ is straightforward. To calculate the minimum intervals of the test networks, we consider two cases: a bad case and the worst case. In the bad case, we set the bounds $\overline{p}$ and $\overline{q}$ as follows:
\begin{itemize}
\item For a load bus $i$, we set $(\overline{p}_i,\overline{q}_i)$ to equal to the specified load data\footnote{In the SCE networks, only apparent power is given. Therefore we assume a power factor of 0.97 to obtain the real and reactive power consumptions. For example, we set $\overline{p}_{22}=-2.16$MW and $\overline{q}_{22}=-0.54$MVAR at load bus 22 since it draws 2.23MVA apparent power.} because there is usually not much flexibility in controlling the loads.
\item For a shunt capacitor bus $i$, we set $\overline{p}_i=0$ and $\overline{q}_i$ to equal to its nameplate capacity.
\item For a distributed generator bus $i$, we set $\overline{q}_i=0$ and $\overline{p}_i$ to equal to its nameplate capacity. In practice, $\overline{p}_i$ is usually smaller.
\end{itemize}
In the bad case setup, $(\overline{p}_i,\overline{q}_i)$ is artificially enlarged except for load buses.

In the worst case, we further set $\overline{p}_i=0$ and $\overline{q}_i=0$ for load buses while they are negative in practice. Hence in the worst case setup, $(\overline{p}_i,\overline{q}_i)$ is artificially enlarged for all buses.

The minimum intervals of the three test networks in the two cases are summarized in Table \ref{table: C1}. Noting that C1 is more difficult to hold if $(\overline{p},\overline{q})$ gets bigger, and the test networks have increasing penetration of distributed generation, one expects C1 to be less likely to hold in the test networks.

	\begin{table}[!h]
	\caption{The range of $r/x$ and minimum intervals of test networks}
	\label{table: C1}
	\begin{center}
        \begin{tabular}{| c | c | c | c |}
        \hline
        & \multirow{2}{*}{range of $r/x$} & minimum interval & minimum interval \\
        & & (worst case) & (bad case) \\
        \hline
        IEEE 13-bus & $[0.331, 2.62]$ & $(0.0175, \infty)$ & $(0.0013, \infty)$ \\
        SCE 47-bus & $[0.321, 7.13]$ & (0.0374,10.0) & (0.0187,995) \\
        SCE 56-bus & $[0.414, 4.50]$ & (0.0652,2.93) & (0.0528,5.85) \\
        \hline
        \end{tabular}
        \end{center}
        \end{table}

In the bad case, the minimum interval contains the range of $r/x$ for all three networks with significant margins. In the worst case, the minimum interval covers the range of $r/x$ for the first two networks, but not the third one. However, \eqref{condition}, which is equivalent to C1, still holds for the third network.

To summarize, C1 holds for all three test networks, even those with high penetration of distributed generation. 
\section{Conclusion}
We have proved that the SOCP relaxation of the OPF problem is exact under Condition C1, after imposing additional constraints on the power injections. Condition C1 can be checked apriori, and holds for the IEEE 13-bus network and two real-world networks with high penetration of distributed generation. The additional constraints eliminate some feasible points of the OPF problem that are close to the voltage upper bounds, which is justified using the same set of test networks.

There remains many interesting open questions on finding the global optimum of the OPF problem. For example, is there an apriori guarantee that a convex relaxation be exact for mesh networks? Is there an apriori guarantee that a convex relaxation be exact for unbalanced three-phase tree networks? If the SOCP relaxation is not exact, can its solution be used to obtain some feasible solution of the OPF problem?

\bibliographystyle{IEEEtran}
\bibliography{optimal_power_flow}

\begin{thebibliography}{10}
\providecommand{\url}[1]{#1}
\csname url@samestyle\endcsname
\providecommand{\newblock}{\relax}
\providecommand{\bibinfo}[2]{#2}
\providecommand{\BIBentrySTDinterwordspacing}{\spaceskip=0pt\relax}
\providecommand{\BIBentryALTinterwordstretchfactor}{4}
\providecommand{\BIBentryALTinterwordspacing}{\spaceskip=\fontdimen2\font plus
\BIBentryALTinterwordstretchfactor\fontdimen3\font minus
  \fontdimen4\font\relax}
\providecommand{\BIBforeignlanguage}[2]{{%
\expandafter\ifx\csname l@#1\endcsname\relax
\typeout{** WARNING: IEEEtran.bst: No hyphenation pattern has been}%
\typeout{** loaded for the language `#1'. Using the pattern for}%
\typeout{** the default language instead.}%
\else
\language=\csname l@#1\endcsname
\fi
#2}}
\providecommand{\BIBdecl}{\relax}
\BIBdecl

\bibitem{OPF}
J.~Carpentier, ``Contribution to the economic dispatch problem,''
  \emph{Bulletin de la Societe Francoise des Electriciens}, vol.~3, no.~8, pp.
  431--447, 1962.

\bibitem{DG}
{California Public Utilities Commission}, ``Zero net energy action plan,''
  2008,
  \url{http://www.cpuc.ca.gov/NR/rdonlyres/6C2310FE-AFE0-48E4-AF03-530A99D28FCE/0/ZNEActionPlanFINAL83110.pdf}.

\bibitem{EV}
{Department of Energy}, ``One million electric vehicles by 2015,'' 2011,
  \url{http://www1.eere.energy.gov/vehiclesandfuels/pdfs/1_million_electric_vehicles_rpt.pdf}.

\bibitem{DR}
M.~H. Albadi and E.~F. El-Saadany, ``Demand response in electricity markets: an
  overview,'' in \emph{IEEE PES General Meeting}, 2007, pp. 1--5.

\bibitem{Stott74}
B.~Stott and O.~Alsac, ``Fast decoupled load flow,'' \emph{IEEE Transactions on
  Power Apparatus and Systems}, vol. PAS-93, no.~3, pp. 859--869, 1974.

\bibitem{Alsac90}
O.~Alsac, J.~Bright, M.~Prais, and B.~Stott, ``Further developments in lp-based
  optimal power flow,'' \emph{IEEE Transactions on Power Systems}, vol.~5,
  no.~3, pp. 697--711, 1990.

\bibitem{Stott09}
B.~Stott, J.~Jardim, and O.~Alsac, ``Dc power flow revisited,'' \emph{IEEE
  Transactions on Power Systems}, vol.~24, no.~3, pp. 1290--1300, 2009.

\bibitem{Contaxis1986}
G.~C. Contaxis, C.~Delkis, and G.~Korres, ``Decoupled optimal power flow using
  linear or quadratic programming,'' \emph{IEEE Transactions on Power Systems},
  vol.~1, no.~2, pp. 1--7, 1986.

\bibitem{Min2005}
W.~Min and L.~Shengsong, ``A trust region interior point algorithm for optimal
  power flow problems,'' \emph{International Journal on Electrical Power and
  Energy Systems}, vol.~27, no.~4, pp. 293--300, 2005.

\bibitem{Sousa2011}
A.~A. Sousa and G.~L. Torres, ``Robust optimal power flow solution using trust
  region and interior methods,'' \emph{IEEE Transactions on Power Systems},
  vol.~26, no.~2, pp. 487--499, 2011.

\bibitem{Baptista2005}
E.~C. Baptista, E.~A. Belati, and G.~R.~M. da~Costa, ``Logarithmic
  barrier-augmented lagrangian function to the optimal power flow problem,''
  \emph{International Journal on Electrical Power and Energy Systems}, vol.~27,
  no.~7, pp. 528--532, 2005.

\bibitem{Torres1998}
G.~L. Torres and V.~H. Quintana, ``An interior-point method for nonlinear
  optimal power flow using voltage rectangular coordinates,'' \emph{IEEE
  Transactions on Power Systems}, vol.~13, no.~4, pp. 1211--1218, 1998.

\bibitem{Jabr2003}
R.~A. Jabr, ``A primal-dual interior-point method to solve the optimal power
  flow dispatching problem,'' \emph{Optimization and Engineering}, vol.~4,
  no.~4, pp. 309--336, 2003.

\bibitem{Capitanescu2007}
F.~Capitanescu, M.~Glavic, D.~Ernst, and L.~Wehenkel, ``Interior-point based
  algorithms for the solution of optimal power flow problems,'' \emph{Electric
  Power System Research}, vol.~77, no. 5-6, pp. 508--517, 2007.

\bibitem{Bai08}
X.~Bai, H.~Wei, K.~Fujisawa, and Y.~Yang, ``Semidefinite programming for
  optimal power flow problems,'' \emph{International Journal of Electric Power
  \& Energy Systems}, vol.~30, no.~6, pp. 383--392, 2008.

\bibitem{Bai09}
X.~Bai and H.~Wei, ``Semi-definite programming-based method for
  security-constrained unit commitment with operational and optimal power flow
  constraints,'' \emph{Generation, Transmission \& Distribution, IET,}, vol.~3,
  no.~2, pp. 182--197, 2009.

\bibitem{Javad12}
J.~Lavaei and S.~H. Low, ``Zero duality gap in optimal power flow problem,''
  \emph{IEEE Transactions on Power Systems}, vol.~27, no.~1, pp. 92--107, 2012.

\bibitem{Masoud11}
M.~Farivar, C.~R. Clarke, S.~H. Low, and K.~M. Chandy, ``Inverter var control
  for distribution systems with renewables,'' in \emph{IEEE SmartGridComm},
  2011, pp. 457--462.

\bibitem{Taylor11}
J.~A. Taylor, ``Conic optimization of electric power systems,'' Ph.D.
  dissertation, MIT, 2011.

\bibitem{Jabr06}
R.~Jabr, ``Radial distribution load flow using conic programming,'' \emph{IEEE
  Transactions on Power Systems}, vol.~21, no.~3, pp. 1458--1459, 2006.

\bibitem{sojoudi2012physics}
S.~Sojoudi and J.~Lavaei, ``Physics of power networks makes hard optimization
  problems easy to solve,'' in \emph{IEEE Power \& Energy Society General
  Meeting}, 2012, pp. 1--8.

\bibitem{Bose_equivalence}
S.~Bose, S.~Low, and K.~Chandy, ``Equivalence of branch flow and bus injection
  models,'' in \emph{Allerton}, 2012.

\bibitem{Zhang12}
B.~Zhang and D.~Tse, ``Geometry of injection regions of power networks,''
  \emph{IEEE Transactions on Power Systems}, no.~2, pp. 788--797, 2012.

\bibitem{Bose12}
S.~Bose, D.~Gayme, S.~Low, and K.~Chandy, ``Quadratically constrained quadratic
  programs on acyclic graphs with application to power flow,''
  \emph{arXiv:1203.5599}, 2012.

\bibitem{lam2012distributed}
A.~Lam, B.~Zhang, and D.~N. Tse, ``Distributed algorithms for optimal power
  flow problem,'' in \emph{IEEE CDC}, 2012, pp. 430--437.

\bibitem{ANSI_C84}
``Electric power systems and equipment---voltage ratings (60 hertz),''
  \emph{ANSI Standard Publication}, no. ANSI C84.1, 1995.

\bibitem{Gan12}
L.~Gan, N.~Li, U.~Topcu, and S.~H. Low, ``On the exactness of convex relaxation
  for optimal power flow in tree networks,'' in \emph{IEEE CDC}, 2012, pp.
  465--471.

\bibitem{farivar2011optimal}
M.~Farivar, R.~Neal, C.~Clarke, and S.~Low, ``Optimal inverter var control in
  distribution systems with high pv penetration,'' \emph{arXiv preprint
  arXiv:1112.5594}, 2011.

\bibitem{Baran89_capacitor_placement}
M.~E. Baran and F.~F. Wu, ``Optimal capacitor placement on radial distribution
  networks,'' \emph{IEEE Transactions on Power Delivery}, vol.~4, no.~1, pp.
  725--734, 1989.

\bibitem{Baran89_capacitor_sizing}
------, ``Optimal sizing of capacitors placed on a radial distribution
  systems,'' \emph{IEEE Transactions on Power Delivery}, vol.~4, no.~1, pp.
  735--743, 1989.

\bibitem{Baran89_network_reconfiguration}
------, ``Network reconfiguration in distribution systems for loss reduction
  and load balancing,'' \emph{IEEE Transactions on Power Delivery}, vol.~4,
  no.~2, 1989.

\bibitem{Turitsyn10}
K.~Turitsyn, P.~Sulc, S.~Backhaus, and M.~Chertkov, ``Local control of reactive
  power by distributed photovoltaic generators,'' in \emph{IEEE SmartGridComm},
  2010, pp. 79--84.

\bibitem{IEEE}
{IEEE distribution test feeders}, available at
  \url{http://ewh.ieee.org/soc/pes/dsacom/testfeeders/}.

\bibitem{SCE}
{Southern California Edison}, available at \url{http://www.sce.com/}.

\bibitem{chiang1990existence}
H.-D. Chiang and M.~E. Baran, ``On the existence and uniqueness of load flow
  solution for radial distribution power networks,'' \emph{IEEE Transactions on
  Circuits and Systems}, vol.~37, no.~3, pp. 410--416, 1990.

\end{thebibliography}


\appendices
\section{Proof of Lemma \ref{lemma: exact}}\label{sec: proofs}
We prove Lemma \ref{lemma: exact} in this appendix. The idea of the proof is as follows. Assume the conditions in Lemma \ref{lemma: exact} holds. If there exists an optimal solution $w$ of the SOCP relaxation that is infeasible for OPF', then one can construct another feasible point $w'$ of the SOCP relaxation that has a smaller objective value than $w$, which contradicts with $w$ being optimal. It follows that every optimal solution of the SOCP relaxation is feasible for OPF', i.e., the SOCP relaxation is exact.

The rest of the appendix is structured as follows. The OPF' problem and the SOCP relaxation are transformed to forms that are easier to illustrate the construction of $w'$ in Appendix \ref{app: transformation}. Since notations in the proof are complicated and inhibit comprehension for general tree networks, we first present the proof for one-line networks (in Appendix \ref{app: line}) where the notations can be significantly simplified (as in Appendix \ref{app: notation}), and then present the proof for general tree networks in Appendix \ref{app: tree}.

\subsection{Transformation}\label{app: transformation}
We transform the OPF' problem and the SOCP relaxation to forms that are easier to illustrate the construction of $w'$ in this appendix. More specifically, the forms introduced in \cite{Masoud11}.

Recall the definition of $S_{ij}$ in \eqref{PF S}, define $$v_i:=W_{ii}$$ for $i\in \hN$, and define
	\begin{eqnarray*}
	\ell_{ij}:=|y_{ij}|^2(W_{ii}-W_{ij}-W_{ji}+W_{jj})
	\end{eqnarray*}
for $i\rightarrow j$, then it can be verified that
	$$v_i-v_j = 2\re(z_{ij}^*S_{ij})-|z_{ij}|^2\ell_{ij}$$
for $i\rightarrow j$. 
The power flow constraints \eqref{BIM 1} can be transformed as
	\begin{eqnarray*}
	&& s_i = \sum_{j:\,j\sim i}(W_{ii}-W_{ij})y_{ij}^*\\
	&\Leftrightarrow& s_i = \sum_{j:\,i\rightarrow j}(W_{ii}-W_{ij})y_{ij}^* + \sum_{h:\,h\rightarrow i}(W_{ii}-W_{ih})y_{ih}^*\\
	&\Leftrightarrow& s_i = \sum_{j:\,i\rightarrow j}S_{ij} + \sum_{h:\,h\rightarrow i}(|z_{hi}|^2\ell_{hi}-(W_{hh}-W_{hi}))y_{hi}^*\\
	&\Leftrightarrow& s_i = \sum_{j:\,i\rightarrow j}S_{ij} + \sum_{h:\,h\rightarrow i}(z_{hi}\ell_{hi}-S_{hi})\\
	&\Leftrightarrow& \sum_{h:\,h\rightarrow i}(S_{hi}-z_{hi}\ell_{hi}) + s_i = \sum_{j:\,i\rightarrow j}S_{ij}
	\end{eqnarray*}
for $i\in \hN$, the voltage constraints can be transformed as
	\begin{eqnarray*}
	\underline{V}_i^2 \leq W_{ii} \leq \overline{V}_i^2 ~\Leftrightarrow~ \underline{v}_i \leq v_i \leq \overline{v}_i
	\end{eqnarray*}
where $\underline{v}_i:=\underline{V}_i^2$ and $\overline{v}_i:=\overline{V}_i^2$ for $i\in \hN^+$, the rank constraints can be transformed as
	\begin{eqnarray*}
	&& \rank(W\{i,j\})=1\\
	&\Leftrightarrow& W_{jj}=\frac{W_{ij}W_{ji}}{W_{ii}}\\
	&\Leftrightarrow& W_{jj}-W_{ij}-W_{ji}+W_{ii}=\frac{(W_{ii}-W_{ij})(W_{ii}-W_{ji})}{W_{ii}}\\
	&\Leftrightarrow& \ell_{ij}=\frac{|S_{ij}|^2}{v_i}
	\end{eqnarray*}
for $i\rightarrow j$, and the constraints in \eqref{sdp} can be transformed as
	\begin{eqnarray*}
	W\{i,j\})\succeq0 ~\Leftrightarrow~ \ell_{ij} \geq \frac{|S_{ij}|^2}{v_i}
	\end{eqnarray*}
for $i\rightarrow j$.

To summarize, the OPF' problem can be reformulated as
\begin{subequations}
\begin{align}
\textbf{OPF':}~\min~~ & \sum_{i\in\hN} f_i(\re(s_i))\nonumber\\
\mathrm{over}~~ & s,S,\ell,v,s_0\nonumber\\
\mathrm{s.t.}~~ & v_i-v_j = 2\re(z_{ij}^*S_{ij})-|z_{ij}|^2\ell_{ij},\quad i\rightarrow j;\label{BFM 1}\\
& \sum_{h:\,h\rightarrow i}(S_{hi}-z_{hi}\ell_{hi}) + s_i = \sum_{j:\,i\rightarrow j}S_{ij}, \quad i\in\hN;\label{BFM 2}\\
& \underline{v}_i \leq v_i \leq \overline{v}_i,\quad i\in\hN^+; \label{BFM v}\\
& s_i \in \mathcal{S}_i, \quad i\in \hN^+;\label{BFM 3}\\
& \ell_{ij}=\frac{|S_{ij}|^2}{v_i}, \quad i\rightarrow j,\label{BFM 4}
\end{align}
\end{subequations}
and the SOCP relaxation can be reformulated as
\begin{eqnarray}
\textbf{SOCP:}~\min && \sum_{i\in\hN} f_i(\re(s_i))\nonumber\\
\mathrm{over} && s,S,\ell,v,s_0\nonumber\\
\mathrm{s.t.} && \eqref{BFM 1}-\eqref{BFM 3};\nonumber\\
&& \ell_{ij}\geq\frac{|S_{ij}|^2}{v_i}, \quad i\rightarrow j.\label{relax}
\end{eqnarray}
The SOCP relaxation is exact if and only if every of its solutions satisfies \eqref{BFM 4}.

\subsection{Simplified notations in one-line networks}\label{app: notation}
We focus on one-line networks, where the notations can be significantly simplified for comprehension, in Appendix \ref{app: notation} and \ref{app: line}, to present the key idea of the proof. The proof for tree networks is similar except for complicated notations, which is given in detail in Appendix \ref{app: tree}.

In one-line networks, we can abbreviate $z_{ij}$, $S_{ij}$, and $\ell_{ij}$ by $z_i$, $S_i$, and $\ell_i$ respectively. The notations are summarized in Fig. \ref{figure: one line}.
	\begin{figure}[h]
     	\centering
     	\includegraphics[scale=0.25]{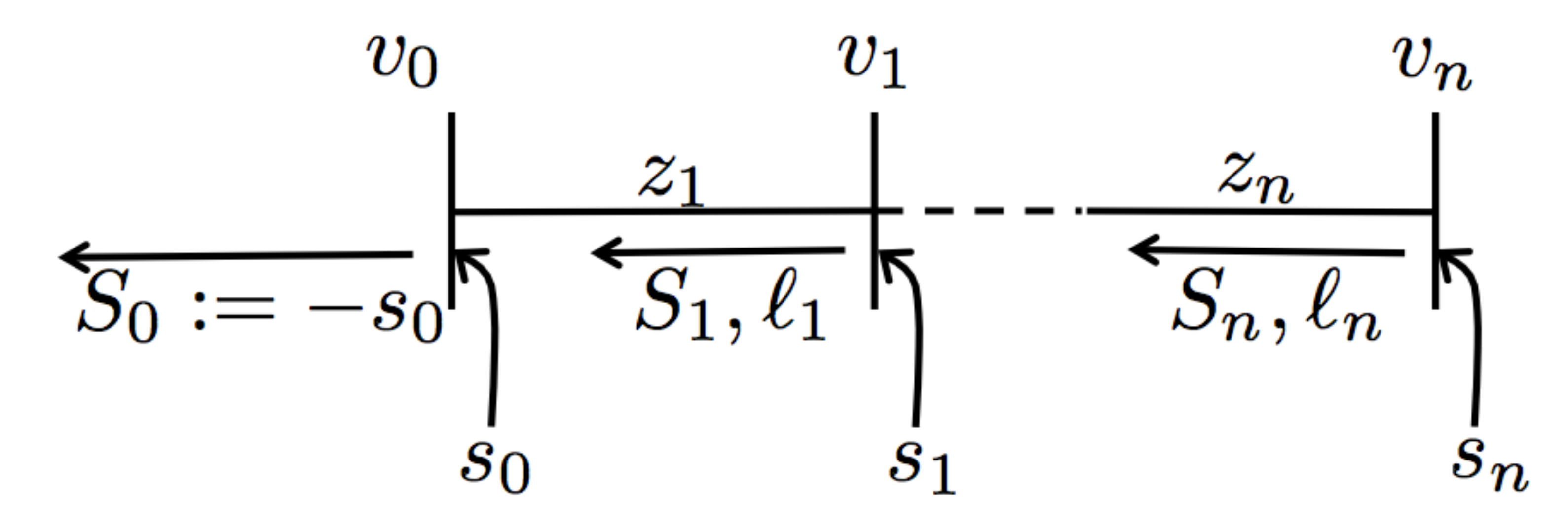}
      	\caption{Simplified notations in a one-line network.}
      	\label{figure: one line}
	\end{figure}
	
With the simplified notations, OPF' is simplified to
	\begin{subequations}
    \begin{align}
    \textbf{OPF':}~\min~~ & \sum_{i=0}^n f_i(\re(s_i))\nonumber\\
	\mathrm{over}~~ & s,S,\ell,v,s_0\nonumber\\
	\mathrm{s.t.}~~ & v_i-v_{i-1} = 2\re(z_i^*S_i) -|z_i|^2\ell_i, \quad i\in\hN^+;\label{Line 2}\\
	& \underline{v}_i \leq v_i \leq \overline{v}_i, \quad i\in\hN^+;\label{Line 3}\\
	& S_n=s_n; \label{Line 6}\\
	& S_{i-1} = s_{i-1} + S_i-z_i\ell_i, \quad i=2,\ldots,n,\nonumber\\
	& 0=s_{i-1} + S_i - z_i\ell_i,\quad i=1; \label{Line 4}\\
	& s_i \in \mathcal{S}_i,   \qquad~ i\in \hN^+;  \label{Line 5}\\
	& \ell_i = \frac{|S_i|^2}{v_i},\quad  i\in \hN^+,\label{Line 7}
    \end{align}
	\end{subequations}
the SOCP relaxation is simplified to
    \begin{align}
    \textbf{SOCP:}~\min~~ & \sum_{i=0}^n f_i(\re(s_i))\nonumber\\
	\mathrm{over}~~ & s,S,\ell,v,s_0\nonumber\\
	\mathrm{s.t.}~~ & \eqref{Line 2}-\eqref{Line 5};\nonumber\\
	& \ell_i \geq \frac{|S_i|^2}{v_i},\quad  i\in \hN^+,\label{line relax}
    \end{align}
the linear approximation $\hat{S}(p+\ii q )$ is simplified to
	$$\hat{S}_i(p+\ii q ) = \hat{P}_i(p ) + \ii \hat{Q}_i(q) = \sum_{j\geq i}p_j + \ii \sum_{j\geq i}q_j, \quad i\in \hN^+,$$
the linear approximation $\hat{W}_{ii}(s)$ is simplified to
	$$\hat{v}_i(s) := \hat{W}_{ii}(s) = v_0 + 2\sum_{j=1}^i\re\left(z_j\hat{S}_j( s)\right), \quad i\in \hN,$$
quantities $\{a_i^k\}_{i\in \hN^+, k=1,2,3,4}$ is simplified to
	\begin{eqnarray*}
	a_i^1 &=& \prod_{j=1}^i\left(1-\frac{2r_j\hat{P}_j^+(\overline{p})}{\underline{v}_j}\right),\\
	a_i^2 &=& \sum_{j=1}^i\frac{2r_j\hat{Q}_j^+(\overline{q})}{\underline{v}_j},\\
	a_i^3 &=& \sum_{j=1}^i\frac{2x_j\hat{P}_j^+(\overline{p})}{\underline{v}_j},\\
	a_i^4 &=& \prod_{j=1}^i\left(1-\frac{2x_j\hat{Q}_j^+(\overline{q})}{\underline{v}_j}\right), \quad i\in \hN^+,
	\end{eqnarray*}
and Condition C1 is simplified to
$$a_{i-1}^1r_i>a_{i-1}^2x_i, ~a_{i-1}^3r_i<a_{i-1}^4x_i \text{ for } i\in \hN^+.$$
		
Lemma \ref{lemma: exact} takes the following form in one-line networks.
\begin{lemma}\label{lemma: line exact}
Consider a one-line network. Assume $f_0$ is strictly increasing and there exists $\overline{p}_i$ and $\overline{q}_i$ such that \eqref{relax constraints} holds for $i\in\hN^+$. Then the SOCP relaxation is exact if C1 holds and all optimal solutions $w=(s, S,\ell,v,s_0)$ of the SOCP relaxation satisfy $\hat{v}_i(s)\leq \overline{v}_i$ for $i\in \hN^+$.
\end{lemma}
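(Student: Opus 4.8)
The plan is to argue by contradiction, exactly along the lines sketched at the start of this appendix. Suppose $w=(s,S,\ell,v,s_0)$ is an optimal solution of the SOCP relaxation that is infeasible for OPF', so that the cone constraint \eqref{line relax} is slack on at least one line, $\ell_m>|S_m|^2/v_m$ for some $m\in\hN^+$. I would then construct a competitor $w'=(s',S',\ell',v',s'_0)$ that keeps every non-substation injection fixed, $s'_i=s_i$ for $i\in\hN^+$, but tightens the loss variables, and show that $w'$ is feasible for the SOCP relaxation with a strictly smaller objective value, contradicting the optimality of $w$.

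Concretely, I would take $w'$ to be the solution obtained by enforcing \eqref{Line 7} with equality, $\ell'_i=|S'_i|^2/v'_i$, and propagating the recursions \eqref{Line 2}, \eqref{Line 4}, \eqref{Line 6} from the fixed injections $s$ and the fixed substation voltage $v_0$. The objective step is then easy: summing the balance equations \eqref{Line 4}, \eqref{Line 6} gives $\re(s_0)=\sum_{i=1}^n r_i\ell_i-\sum_{i=1}^n\re(s_i)$ and likewise $\re(s'_0)=\sum_{i=1}^n r_i\ell'_i-\sum_{i=1}^n\re(s_i)$, so once I establish $\ell'_i\le\ell_i$ for all $i$ with strict inequality somewhere, the fact that $r_i>0$ forces $\re(s'_0)<\re(s_0)$; since every other injection is unchanged and $f_0$ is strictly increasing, the objective value of $w'$ is strictly smaller than that of $w$.

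It remains to verify that $w'$ is SOCP-feasible, which reduces to checking the voltage window \eqref{Line 3}, since \eqref{line relax} holds with equality by construction and $s'_i=s_i\in\mathcal{S}_i$. For the upper bound I would invoke Lemma \ref{lemma: v}: because $w'$ satisfies \eqref{BIM 1}, \eqref{sdp} and \eqref{PF S}, it gives $v'_i\le\hat v_i(s')=\hat v_i(s)$, and as $w$ is an optimal solution the hypothesis $\hat v_i(s)\le\overline v_i$ applies and yields $v'_i\le\overline v_i$. For the lower bound I would prove the monotonicity relations $v'_i\ge v_i$ and $\ell'_i\le\ell_i$, from which $v'_i\ge v_i\ge\underline v_i$ follows immediately.

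The monotonicity relations are the heart of the argument and the main obstacle, because the power-flow recursion \eqref{Line 4} runs from the leaf toward bus $0$ while the voltage recursion \eqref{Line 2} runs from bus $0$ toward the leaf, so a naive one-directional induction on $(S',\ell',v')$ is circular: tightening the loss raises the voltages (helping $v'_i\ge v_i$) but simultaneously changes the sending-end flows $S'_i$, and the sign of $|S'_i|^2-|S_i|^2$ is not fixed when there is reverse flow from distributed generation. This is precisely where Condition C1 enters. Using the linear-approximation upper bounds $\re(S_i)\le\hat P_i^+(\overline p)$ and $\im(S_i)\le\hat Q_i^+(\overline q)$ coming from Lemma \ref{lemma: v} with \eqref{relax constraints}, together with the voltage floor $v_i\ge\underline v_i$, the accumulated coefficients $a^1_{i-1},\dots,a^4_{i-1}$ bound how a reduction of the losses propagates through the two coupled recursions, since the voltage responds to both real and reactive flow through $2\re(z_i^*S_i)=2(r_iP_i+x_iQ_i)$; the two inequalities in C1, equivalently $r_i/x_i\in(\underline b_{i-1},\overline b_{i-1})$ as in \eqref{condition}, guarantee that along every line the net effect of tightening is to lower $\ell'_i$ and raise $v'_i$ rather than reverse either trend. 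I would therefore carry out a simultaneous induction over the buses that maintains the pair $v'_i\ge v_i$, $\ell'_i\le\ell_i$ (also establishing that $w'$ is well defined), passing from bus $i-1$ to bus $i$ by means of C1, and I expect the bookkeeping of these sensitivity bounds, rather than any single conceptual leap, to be the most laborious part of the proof.
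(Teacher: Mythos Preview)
Your overall contradiction strategy is the same as the paper's, but your construction of the competitor $w'$ is different and, as written, has a genuine gap.

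You propose to let $w'$ be the \emph{exact DistFlow solution} with the same injections $s$, i.e., to enforce $\ell_i'=|S_i'|^2/v_i'$ for all $i$ simultaneously. As you yourself note, this is a coupled nonlinear fixed-point system: the power recursion \eqref{Line 4} runs leaf-to-root while the voltage recursion \eqref{Line 2} runs root-to-leaf, and $\ell_i'$ depends on both $S_i'$ (downstream) and $v_i'$ (upstream). You then appeal to a ``simultaneous induction passing from bus $i-1$ to bus $i$ by means of C1'', but this is not a well-defined procedure: at bus $i$ you need $S_i'$, which depends on $\ell_{i+1}',\dots,\ell_n'$, none of which are yet available in a root-to-leaf sweep. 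You never say how C1 actually enters the induction step, and the coupling does not untangle with any single ordering. Even the well-definedness of $w'$ (existence and uniqueness of the fixed point) is not free.

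The paper sidesteps all of this by \emph{not} jumping to the exact solution. It picks the first slack index $m$, reduces $\ell_m$ by an infinitesimal $\epsilon$, and for $k<m$ sets $\ell_k'=\big(\max\{P_k'^2,P_k^2\}+\max\{Q_k'^2,Q_k^2\}\big)/v_k$ using the \emph{old} $v_k$. This breaks the circularity completely: $(S',\ell',s_0')$ is computed in a single leaf-to-root pass, and $v'$ is then filled in root-to-leaf. The $\max$ guarantees \eqref{line relax} regardless of how $S_k'$ compares to $S_k$, so feasibility does not hinge on monotonicity of $|S_k|^2$. What remains is to show $\Delta S_k>0$ for $k<m$; linearizing in $\epsilon$ turns this into a product of $2\times 2$ matrices $A_{k}(w)\cdots A_{m-1}(w)u_m>0$, and Condition C1 is used, via a separate matrix lemma, to guarantee exactly this sign. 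So C1 enters through a linear recursion on first-order perturbations, not through a finite comparison of two nonlinear fixed points.

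In short: your objective-value computation and your use of $\hat v_i(s)\le\overline v_i$ for the upper voltage bound are fine, but the monotonicity step is the whole proof, and ``simultaneous induction'' does not deliver it. Either replace your global tightening by the infinitesimal one (and then linearize), or supply a genuine fixed-point/comparison argument that handles the two-way coupling; the latter is substantially harder than what you have sketched.
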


\subsection{Proof for one-line networks (Lemma \ref{lemma: line exact})}\label{app: line}
The idea of the proof is as follows. Suppose the conditions in Lemma \ref{lemma: line exact} hold. If there exists an optimal solution $w$ of the SOCP relaxation that violates \eqref{Line 7}, then one can construct another feasible point $w'$ of the SOCP relaxation that has a smaller objective value than $w$, which contradicts with $w$ being optimal. Hence, every solution of the SOCP relaxation must satisfy \eqref{Line 7}, i.e., the SOCP relaxation is exact.

Specifically, given an optimal solution $w=(s,S,\ell,v,s_0)$ of the SOCP relaxation that violates \eqref{Line 7}, one can derive a contradiction through the following steps, which are described in turn.
	\begin{itemize}
	\item[(S1)] Construct a new point $w'(\epsilon)$ according to Algorithm \ref{algorithm} for sufficiently small $\epsilon>0$.

	\item[(S2)] Prove that if $\hat{v}_i(s)\leq \overline{v}_i$ for $i\in \hN^+$ and $w'(\epsilon)$ satisfies \eqref{observation}, then $w'(\epsilon)$ is feasible for the SOCP relaxation. If further, $f_0$ is strictly increasing, then $w'(\epsilon)$ has a smaller objective value than $w$, which contradicts with $w$ being optimal. 

	\item[(S3)] Linearize $w'(\epsilon)$ around $\epsilon=0$ and transform \eqref{observation} into \eqref{key step}, a condition that depends only on $w$ and not on $w'(\epsilon)$. If \eqref{key step} holds, then \eqref{observation} holds for sufficiently small $\epsilon$.

	\item[(S4)] Prove that if there exists $\overline{p}_i$ and $\overline{q}_i$ such that \eqref{relax constraints} holds for $i\in\hN^+$, then C1 implies \eqref{key step}.
	\end{itemize}

\subsection*{Step (S1)}
Let $w=(s,S,\ell,v,s_0)$ be an arbitrary feasible point of the SOCP relaxation that violates \eqref{Line 7}. Define
	\begin{equation}\label{m}
	m:=\min\left\{i\in \hN^+~\left|~\ell_i>|S_i|^2/v_i\right.\right\},
	\end{equation}
then $m\in \hN^+$ and $\ell_m>|S_m|^2/v_m$. For every $\epsilon$ such that $0<\epsilon\leq \ell_m-|S_m|^2/v_m$, we can construct a point $w'(\epsilon)=(s',S',\ell',v',s_0')$ according to Algorithm \ref{algorithm}.

	\begin{algorithm}[!ht]
	\caption{Construct a new point}
	\label{algorithm}
	
	\begin{algorithmic}
	\REQUIRE ~\\
	$w=(s,S,\ell,v,s_0)$, feasible for SOCP but violates \eqref{Line 7};\\
	$m=\min\left\{i\in \hN^+~|~\ell_i>|S_i|^2/v_i\right\}$;\\
	$\epsilon\in\left(0,\ell_m-|S_m|^2/v_m\right]$.
	\ENSURE ~\\
	$w'(\epsilon)=(s',S',\ell',v',s_0')$.
	\end{algorithmic}
	
	\vspace{-0.1in}
	$\line(1,0){242}$
	
	\begin{enumerate}	
	\item Construct $s'$: $s'\leftarrow s$.
	
	\item Construct $S'$, $\ell'$, and $s_0'$ ($S=P+\ii Q$, $S'=P'+\ii Q'$):
	\begin{itemize}
	\item for $k > m$, $\ell_k' \leftarrow \ell_k$.
    	\item for $k \geq m$, $S_k' \leftarrow S_k$.
	\item do the following recursively for $k=m,\ldots,1$:
        \begin{align*}
        & \ell_k' \leftarrow \begin{cases} \ell_k-\epsilon & k=m\\ \frac{\displaystyle \max\left\{P_k'^2,P_k^2\right\} + \max\left\{Q_k'^2,Q_k^2\right\}}{\displaystyle v_k} & k<m,\end{cases} \\
	& \begin{cases}
	S_{k-1}' \leftarrow S_k'-z_k\ell_k'+s_{k-1}' & k\neq1 \\
	s_0' \leftarrow z_1\ell_1' - S_1' & k=1.
	\end{cases}
        \end{align*}
	
	\end{itemize}
	
	\item Construct $v'$:
	\begin{itemize}
	\item set $v_0'\leftarrow v_0$;
	
	\item for $k=1,2,\ldots,n$,
	
	\begin{algorithmic}
	       \STATE $v_k' \leftarrow v_{k-1}'+2\re(z_k^*S_k')-|z_k|^2\ell_k'$.
	\end{algorithmic}
	\end{itemize}
	
	\end{enumerate}
	
	\end{algorithm}

Algorithm \ref{algorithm} keeps $s$ unchanged, i.e., $s'=s$, therefore $w'$ satisfies \eqref{Line 5}. The main step in Algorithm \ref{algorithm} is constructing $S'$, $\ell'$ and $s_0'$, after which $v'$ is simply constructed to satisfy \eqref{Line 2}. After initializing $\ell_k'=\ell_k$ for $k>m$ and $S_k'=S_k$ for $k\geq m$, equation \eqref{Line 6} is satisfied and equation \eqref{Line 4} is satisfied for $i>m$.

The construction of $\ell_k'$ and $S_{k-1}'$ (or $s_0'$) is done recursively for $k=m,\ldots,1$ as follows. First construct $\ell_k'$: reduce $\ell_k$ to $\ell_k'=\ell_k-\epsilon$ if $k=m$; and modify $\ell_k$  so that the constraints in \eqref{line relax} remain satisfied (assuming $v_k'= v_k$) after $S_k$ is changed if $k<m$. After deciding on $\ell_k'$, the construction of $S_{k-1}'$ (or $s_0'$) is simply to satisfy \eqref{Line 4}.

Hence, $w'(\epsilon)$ may only violate \eqref{Line 3} and \eqref{line relax} out of all the constraints in the SOCP relaxation.

\subsection*{Step (S2)}
It suffices to prove that under the conditions given in Lemma \ref{lemma: line exact}, for any optimal solution $w$ of the SOCP relaxation that violates \eqref{Line 7}, there exists $\epsilon>0$ such that $w'(\epsilon)$ is feasible for the SOCP relaxation and has a smaller objective value than $w$. The following lemma gives a sufficient condition under which $w'(\epsilon)$ is indeed feasible and ``better''. To state the lemma, let $S_0\eqdef -s_0$ denote the power that the substation injects to the main grid, and define
    $$\Delta S_i=\Delta P_i+\ii \Delta Q_i:=S_i'-S_i, \quad \Delta v_i:=v_i'-v_i$$
as the difference from $w'$ to $w$ for $i\in \hN$. For complex numbers, let the operators $<$, $\leq$, $>$, $\geq$ denote componentwise.

	\begin{lemma}\label{lemma: observation}
	Consider a one-line network. Given a feasible point $w=(s,S,\ell,v,s_0)$ of the SOCP relaxation that violates \eqref{Line 7}, let $m$ be defined as \eqref{m}, $\epsilon\in(0,\ell_m-|S_m|^2/v_m]$, and $w'(\epsilon)=(s',S',\ell',v',s_0')$ be the output of Algorithm \ref{algorithm}.
    \begin{itemize}
    \item If $\hat{v}_i(s)\leq \overline{v}_i$ for $i\in \hN^+$ and
	\begin{equation}\label{observation}
	S_k' > S_k \text{ for } 0\leq k\leq m-1,
	\end{equation}
then $w'(\epsilon)$ is feasible for the SOCP relaxation.
    \item If further, $f_0$ is strictly increasing, then $w'(\epsilon)$ has a smaller objective value than $w$, i.e.,
    \end{itemize}
    $$\sum_{i=0}^n f_i(\re(s_i')) < \sum_{i=0}^n f_i(\re(s_i)).$$
	\end{lemma}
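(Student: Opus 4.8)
The plan is to verify the two bullets separately, both resting on the single auxiliary quantity $\Delta v_k := v_k'-v_k$. By the remarks following Algorithm \ref{algorithm}, the output $w'(\epsilon)$ already satisfies \eqref{Line 2}, \eqref{Line 4}, \eqref{Line 5} and \eqref{Line 6} by construction, and every $\ell_k'$ it produces is nonnegative (for $k>m$ from feasibility of $w$, for $k=m$ from $\epsilon\le\ell_m-|S_m|^2/v_m$, and for $k<m$ because it is a sum of squares divided by $v_k>0$). Hence only the cone constraints \eqref{line relax} and the voltage bounds \eqref{Line 3} remain to be checked, and I would show that all three follow once $\Delta v_k\ge0$ is established for every $k$.

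The heart of the argument is proving $\Delta v_k\ge0$. Writing \eqref{Line 2} for $w'$ and for $w$ and subtracting gives the recursion $\Delta v_k=\Delta v_{k-1}+2\re(z_k^*\Delta S_k)-|z_k|^2\Delta\ell_k$ with $\Delta v_0=0$, where $\Delta S_k:=S_k'-S_k$ and $\Delta\ell_k:=\ell_k'-\ell_k$. The decisive simplification comes from the power-balance relation \eqref{Line 4}: applied to both points (with $s'=s$) it yields $z_k\Delta\ell_k=\Delta S_k-\Delta S_{k-1}$, and substituting this collapses the increment to $\Delta v_k-\Delta v_{k-1}=\re(z_k^*(\Delta S_k+\Delta S_{k-1}))$. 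Since $z_k=r_k+\ii x_k$ with $r_k,x_k\ge0$, this increment is nonnegative whenever $\Delta S_k,\Delta S_{k-1}\ge0$. Using hypothesis \eqref{observation} (so $\Delta S_k>0$ for $0\le k\le m-1$), together with $\Delta S_m=0$ and $\Delta S_k=0$ for $k>m$ coming from the initialization $S_k'\leftarrow S_k$ ($k\ge m$) in Algorithm \ref{algorithm}, an induction from $\Delta v_0=0$ gives $\Delta v_k\ge0$ for all $k$.

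With $v_k'\ge v_k$ the rest is routine. The lower voltage bound is immediate from $v_k'\ge v_k\ge\underline v_k$. For \eqref{line relax}, the algorithm builds $\ell_k'$ so that $\ell_k'v_k\ge|S_k'|^2$ in every case, whence $\ell_k'\ge|S_k'|^2/v_k\ge|S_k'|^2/v_k'$. For the upper bound I would note that $w'$ now satisfies the power-flow equations and \eqref{line relax}, so Lemma \ref{lemma: v} applies and yields $v_k'\le\hat v_k(s')=\hat v_k(s)$; the hypothesis $\hat v_k(s)\le\overline v_k$ then gives $v_k'\le\overline v_k$. This proves the first bullet. For the second, Algorithm \ref{algorithm} leaves $s$ unchanged, so $\re(s_i')=\re(s_i)$ for all $i\in\hN^+$ and those cost terms are identical; the only change is at the substation, where \eqref{observation} at $k=0$ gives $S_0'>S_0$, i.e. $\re(s_0')<\re(s_0)$ since $S_0=-s_0$. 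As $f_0$ is strictly increasing, $f_0(\re(s_0'))<f_0(\re(s_0))$ and the total cost strictly decreases.

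The step I expect to be the main obstacle is exactly $\Delta v_k\ge0$: the raw recursion carries the adverse term $-|z_k|^2\Delta\ell_k$, and indeed $\Delta\ell_k\ge0$ for $k<m$, so monotonicity is invisible until the identity $z_k\Delta\ell_k=\Delta S_k-\Delta S_{k-1}$ from \eqref{Line 4} is used to cancel that term and re-express each increment purely through the nonnegative flow perturbations $\Delta S_k$.
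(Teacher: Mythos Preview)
Your proof is correct and follows essentially the same route as the paper: the key identity $\Delta v_k-\Delta v_{k-1}=\re\bigl(z_k^*(\Delta S_k+\Delta S_{k-1})\bigr)$ obtained by combining \eqref{Line 2} and \eqref{Line 4} is exactly the paper's device for showing $\Delta v_k\ge0$, and the remaining checks for \eqref{line relax}, the voltage bounds, and the strict cost decrease are handled in the same way. The only cosmetic difference is that you invoke Lemma~\ref{lemma: v} for the upper voltage bound whereas the paper re-derives its content inline (the inequalities $S_i'\le\hat S_i(s')$ and $v_i'\le\hat v_i(s')$) directly for the branch-flow variables.
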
	
	
	\begin{proof}
	As having been discussed in Step (S1), to check that $w'(\epsilon)$ is feasible for the SOCP relaxation, it suffices to show that $w'(\epsilon)$ satisfies \eqref{Line 3} and \eqref{line relax}, i.e., $v_i'\geq\underline{v}_i$, $\ell_i'\geq |S_i'|^2/v_i'$, and $v_i'\leq\overline{v}_i$ for $i\in\hN^+$.
	
	First show that $v_i'\geq\underline{v}_i$ for $i\in\hN^+$.  If \eqref{observation} holds, then $\Delta S_k>0$ for $0\leq k\leq m-1$ and $\Delta S_k=0$ for $m\leq k\leq n$. It follows from \eqref{Line 2} and \eqref{Line 4} that
	\begin{eqnarray*}
	\Delta v_i - \Delta v_{i-1} = \re ( z_i^* \Delta S_i) + \re ( z_i^* \Delta S_{i-1}) \geq 0
	\end{eqnarray*}
for $i \in \hN^+$ and the inequality is strict for $1\leq i\leq m$. Hence,
	\begin{eqnarray*}
	\Delta v_n \geq \ldots \geq \Delta v_m > \Delta v_{m-1} >\ldots >  \Delta v_0=0,
	\end{eqnarray*}
which implies $v_i'> v_i\geq\underline{v}_i$ for $i\in \hN^+$.

Next show that $\ell_i'\geq |S_i'|^2/v_i'$ for $i\in\hN^+$. For $i>m$, one has $\ell_i'=\ell_i\geq|S_i|^2/v_i=|S_i'|^2/v_i\geq|S_i'|^2/v_i'$; for $i=m$, one has $\ell_i'=\ell_i-\epsilon\geq|S_i|^2/v_i=|S_i'|^2/v_i\geq|S_i'|^2/v_i'$; for $i<m$, one has
	\begin{eqnarray*}
	\ell_i' &\!\!\!=& \!\!\!\frac{\max\{P_i'^2,P_i^2\}+\max\{Q_i'^2,Q_i^2\}}{v_i}\\
	&\!\!\!\geq& \!\!\!\frac{P_i'^2+Q_i'^2}{v_i} ~=~ \frac{|S_i'|^2}{v_i} ~\geq~ \frac{|S_i'|^2}{v_i'}.
	\end{eqnarray*}
Hence, $\ell_i'\geq |S_i'|^2/v_i'$ for $i\in\hN^+$.

Finally show that $v_i'\leq \overline{v}_i$ for $i\in\hN^+$. Since $\ell_i'\geq0$ for $i\in\hN^+$, it follows from \eqref{Line 4} that
	\begin{eqnarray*}
	S_{i-1}' = S_i' - z_i\ell_i'+s_{i-1}' \leq S_i' + s_{i-1}'
	\end{eqnarray*}
for $i=2,\ldots,n$. Hence, one has
	\begin{eqnarray*}
	S_i' &\!\!\!\leq& \!\!\!S_{i+1}'+s_i' ~\leq~ S_{i+2}'+s_{i+1}'+s_i' \\
    &\!\!\!\leq& \!\!\!\ldots ~\leq~ S_{n}'+s_{n-1}'+\ldots+s_i' \\
    &\!\!\! =& \!\!\!\sum_{j= i}^n s_j' ~=~ \hat{S}_i(s')
	\end{eqnarray*}
for $i\in\hN^+$. Therefore, it follows from \eqref{Line 2} that
 	\begin{eqnarray*}
	v_i'-v_{i-1}' &=& 2\re(z_i^*S_i') -|z_i|^2\ell_i' \\
	&\leq& 2\re(z_i^*S_i') \\
	&\leq& 2\re(z_i^*\hat{S}_i(s'))
	\end{eqnarray*}
for $i\in\hN^+$. Now, sum up the inequality over $\hP_i$ to obtain
	\begin{eqnarray*}
	v_i'-v_0' \leq 2\sum_{j=1}^i\re(z_j^*\hat{S}_j(s'))
	\end{eqnarray*}
for $i\in \hN^+$, which implies
    $$v_i'\leq v_0 + 2\sum_{j=1}^i\re(z_j^*\hat{S}_j(s')) =\hat{v}_i(s') =\hat{v}_i(s) \leq \overline{v}_i$$
for $i\in \hN^+$.

To this end, it has been proved that if $\hat{v}_i(s)\leq\overline{v}_i$ for $i\in\hN^+$ and \eqref{observation} holds, then $w'(\epsilon)$ is feasible for the SOCP relaxation.

If further, $f_0$ is strictly increasing, then one has
	\begin{eqnarray*}
	&& \sum_{i=0}^n f_i(\re(s_i')) - \sum_{i=0}^n f_i(\re(s_i)) \\
	&=& f_0(\re(s_0')) - f_0(\re(s_0))\\
    	&=& f_0(-\re(S_0')) - f_0(-\re(S_0)) < 0,
	\end{eqnarray*}
i.e., $w'(\epsilon)$ has a smaller objective value than $w$. This completes the proof of Lemma \ref{lemma: observation}.
	\end{proof}
	
	\begin{figure}[!htbp]
     	\centering
     	\includegraphics[scale=0.25]{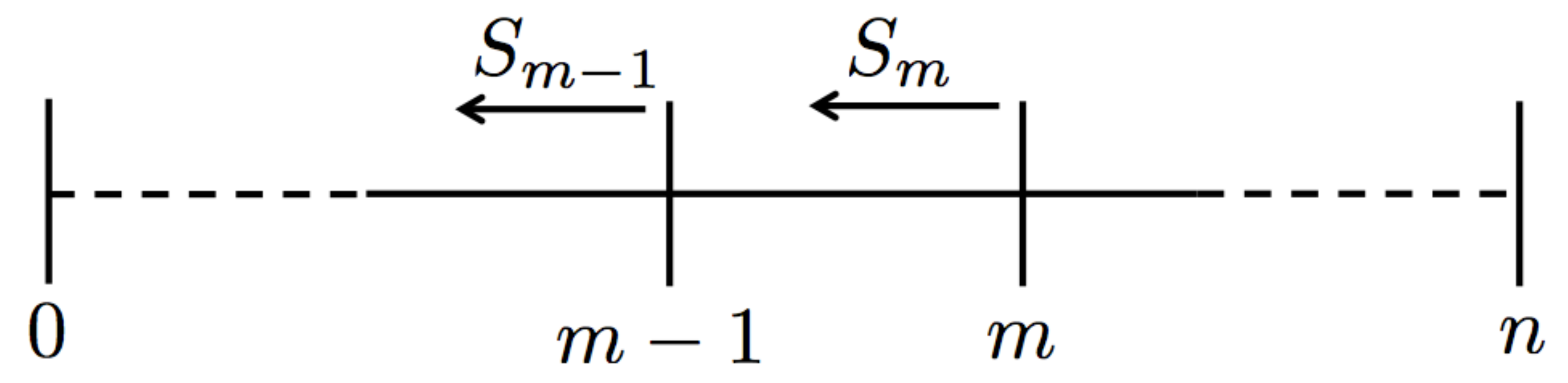}
      	\caption{Illustration of \eqref{observation}.}
      	\label{fig: illustrate}
	\end{figure}

We illustrate that \eqref{observation} seems natural to hold with Fig. \ref{fig: illustrate}. Recall that $S_m'=S_m$ and $\ell_m'=\ell_m-\epsilon$, therefore
	\begin{eqnarray}
	\Delta S_{m-1} = \Delta S_m-z_m\Delta\ell_m = z_m\epsilon > 0. \label{m difference}
	\end{eqnarray}
Intuitively, after increasing $S_{m-1}$, upstream power flow ($S_k$ for $0\leq k\leq m-2$) is likely to increase. Lemma \ref{lemma: observation} says that if upstream power flow indeed increases, then $w'(\epsilon)$ is feasible and ``better'' than $w$.

Assume that $f_0$ is strictly increasing. Lemma \ref{lemma: observation} implies that for any optimal solution $w=(s,S,\ell,v,s_0)$ of the SOCP relaxation, if $\hat{v}_i(s)\leq \overline{v}_i$ for $i\in \hN^+$ and there exists some $\epsilon\in(0,\ell_m-|S_m|^2/v_m]$ such that $w'(\epsilon)$ satisfies \eqref{observation}, then $w$ cannot be optimal for the SOCP relaxation, which is a contradiction.

\subsection*{Step (S3)}
We transform \eqref{observation}, which depends on both $w$ and $w'$, to \eqref{key step}, that only depends on $w$, in this step. The idea is to approximate $w'(\epsilon)$ by its Taylor expansion near $\epsilon=0$.

First compute the Taylor expansion of $w'(\epsilon)$. It follows from \eqref{m difference} that
 	\begin{equation}\label{initial}
    \begin{pmatrix} \Delta P_{m-1}\\ \Delta Q_{m-1} \end{pmatrix}
	= \begin{pmatrix} r_m\\ x_m \end{pmatrix} \epsilon > 0.
    \end{equation}
For any $t\in\{1,\ldots,m-1\}$, if
	\begin{equation}\label{recursive condition}
    \begin{pmatrix} \Delta P_t\\ \Delta Q_t \end{pmatrix} = B_t\epsilon + O(\epsilon^2)
    \end{equation}
for some $B_t>0$, then $(\Delta P_t,\Delta Q_t)^T>0$ for sufficiently small $\epsilon$. It follows that
	\begin{eqnarray*}
	&&\!\!\! \!\!\! \!\!\! \! \Delta \ell_t \,=\, \frac{\max\{P_t'^2,P_t^2\}+\max\{Q_t'^2,Q_t^2\}}{v_t}-\frac{P_t^2+Q_t^2}{v_t}\\
	&\!\!\!=& \!\!\!\frac{\max\{P_t'^2-P_t^2,0\}+\max\{Q_t'^2-Q_t^2,0\}}{v_t}\\
	&\!\!\!=& \!\!\!\frac{\max\{2P_t\Delta P_t+O(\epsilon^2),0\}}{v_t}  +\frac{\max\{2Q_t\Delta Q_t+O(\epsilon^2),0\}}{v_t}\\
	&\!\!\!=& !\!\!\frac{2P_t^+\Delta P_t}{v_t} +\frac{2Q_t^+\Delta Q_t}{v_t}+O(\epsilon^2)
	\end{eqnarray*}
and
	\begin{eqnarray}
	&& \!\!\!\!\!\!\!\!\!\!\!\!\!\!\!\!\!\!\!\! \begin{pmatrix} \Delta P_{t-1}\\ \Delta Q_{t-1} \end{pmatrix} \,=\,
	\begin{pmatrix} \Delta P_t\\ \Delta Q_t \end{pmatrix}
	-\begin{pmatrix} r_t\\ x_t \end{pmatrix}\Delta \ell_t \nonumber\\
	&=& \!\!\! \begin{pmatrix} 1-\frac{2r_tP_t^+}{v_t} & -\frac{2r_tQ_t^+}{v_t}\\ -\frac{2x_tP_t^+}{v_t} & 1-\frac{2x_tQ_t^+}{v_t} \end{pmatrix}
	\begin{pmatrix} \Delta P_t\\ \Delta Q_t \end{pmatrix} +O(\epsilon^2).\label{recursive}
	\end{eqnarray}
With this recursive relation and the initial value in \eqref{initial}, the Taylor expansion of $(\Delta P_t,\Delta Q_t)^T$ near $\epsilon=0$ can be computed for $0\leq t \leq m-1$, as given in Lemma \ref{lemma: Taylor}.
	
To state the lemma, given any feasible point $w=(s,S=P+\ii Q,\ell,v,s_0)$ of the SOCP relaxation, define $c_i(w):=1-2r_iP_i^+/v_i$, $d_i(w):=2r_iQ_i^+/v_i$, $e_i(w):=2x_iP_i^+/v_i$, $f_i(w):=1-2x_iQ_i^+/v_i$, and
	\begin{eqnarray*}
	A_i(w)=\begin{pmatrix} c_i(w) & -d_i(w)\\ -e_i(w) & f_i(w) \end{pmatrix}
	\end{eqnarray*}
for $i\in \hN^+$. Also define $u_i:=(r_i,x_i)^T$ for $i\in\hN^+$.

    \begin{lemma}\label{lemma: Taylor}
    Consider a one-line network. Given a feasible point $w=(s,S,\ell,v,s_0)$ of the SOCP relaxation that violates \eqref{Line 7}, let $m$ be defined as \eqref{m}, $\epsilon\in(0,\ell_m-|S_m|^2/v_m]$, and $w'(\epsilon)=(s',S',\ell',v',s_0')$ be the output of Algorithm \ref{algorithm}. If
	\begin{equation*}
	A_t(w)\cdots A_{m-1}(w)u_m > 0
	\end{equation*}
for $t=1,\ldots,m$, then
	\begin{equation}\label{Taylor}
	(\Delta P_{t-1}, \Delta Q_{t-1})^T = A_t\cdots A_{m-1}u_m\epsilon+O(\epsilon^2)
	\end{equation}
for $t=1,\ldots,m$.
    \end{lemma}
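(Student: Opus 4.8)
The plan is to prove \eqref{Taylor} by a downward induction on $t$, running from $t=m$ down to $t=1$, using the already-established initial value \eqref{initial} as the base case and the recursion \eqref{recursive} for the inductive step. I would adopt the convention that the empty product $A_m(w)\cdots A_{m-1}(w)$ is the $2\times 2$ identity matrix; then \eqref{Taylor} at $t=m$ reads $(\Delta P_{m-1},\Delta Q_{m-1})^T = u_m\,\epsilon + O(\epsilon^2)$, which is exactly \eqref{initial} (indeed with no remainder term at all), disposing of the base case.

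For the inductive step I would fix $t$ with $1\le t\le m-1$ and assume \eqref{Taylor} at index $t+1$, namely
$$(\Delta P_t,\Delta Q_t)^T = A_{t+1}(w)\cdots A_{m-1}(w)\,u_m\,\epsilon + O(\epsilon^2).$$
The hypothesis of the lemma applied at index $t+1$ gives $A_{t+1}(w)\cdots A_{m-1}(w)\,u_m>0$ componentwise. Since the leading coefficient of the expansion above is therefore strictly positive, I conclude that $(\Delta P_t,\Delta Q_t)^T>0$ for all sufficiently small $\epsilon>0$. This strict positivity is precisely what legitimizes the passage that produced \eqref{recursive}: it is what lets the $\max\{\cdot,0\}$ terms appearing in $\Delta\ell_t$ be resolved into $2P_t^+\Delta P_t/v_t+2Q_t^+\Delta Q_t/v_t+O(\epsilon^2)$, which is what generates the matrix $A_t(w)$.

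Having certified the positivity needed to invoke \eqref{recursive}, I would write
$$(\Delta P_{t-1},\Delta Q_{t-1})^T = A_t(w)\,(\Delta P_t,\Delta Q_t)^T + O(\epsilon^2),$$
substitute the inductive expansion, and observe that the entries of $A_t(w)$ depend only on the fixed point $w$ and are thus $O(1)$ in $\epsilon$, so that $A_t(w)\cdot O(\epsilon^2)$ stays $O(\epsilon^2)$. This yields
$$(\Delta P_{t-1},\Delta Q_{t-1})^T = A_t(w)A_{t+1}(w)\cdots A_{m-1}(w)\,u_m\,\epsilon + O(\epsilon^2),$$
which is \eqref{Taylor} at index $t$, completing the induction.

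The step I expect to require the most care is not any single algebraic manipulation but the bookkeeping that threads the two roles of the positivity hypothesis together: at each level $A_{t+1}(w)\cdots A_{m-1}(w)\,u_m>0$ is used to certify $(\Delta P_t,\Delta Q_t)^T>0$ for small $\epsilon$, and only that certified positivity justifies the sign resolution inside $\Delta\ell_t$ that underlies \eqref{recursive}. I would also take care with the empty-product convention at the base case and confirm that the $O(\epsilon^2)$ remainder does not degrade into a lower-order term across the $m$ successive matrix multiplications, which it does not, since $m$ and the matrices $A_t(w)$ are fixed independently of $\epsilon$.
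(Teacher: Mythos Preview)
Your proposal is correct and follows essentially the same approach as the paper: a downward induction on $t$ from $m$ to $1$, using \eqref{initial} as the base case and invoking the positivity hypothesis at each step to justify the recursion \eqref{recursive}. Your treatment is in fact slightly more explicit than the paper's about the empty-product convention and about why the $O(\epsilon^2)$ remainder is preserved through the fixed number of matrix multiplications.
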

    \begin{proof}
    We prove that \eqref{Taylor} holds for $t=m,\ldots,1$ by mathematical induction on $t$.
    \begin{itemize}
    \item[i)] When $t=m$, one has $(\Delta P_{t-1}, \Delta Q_{t-1})^T = u_m\epsilon$ according to \eqref{initial}. Hence, \eqref{Taylor} holds for $t=m$.
    \item[ii)] Assume that \eqref{Taylor} holds for $t=k$ ($2\leq k\leq m$), i.e., $$(\Delta P_{k-1}, \Delta Q_{k-1})^T = A_k\cdots A_{m-1}u_m\epsilon+O(\epsilon^2).$$ Since $A_k\cdots A_{m-1}u_m>0$, equation \eqref{recursive condition} holds for $t=k-1$. It follows that when $t=k-1$, one has
        \begin{eqnarray*}
	    \begin{pmatrix} \Delta P_{t-1}\\ \Delta Q_{t-1} \end{pmatrix}
	    \!\!\!&=& \!\!\! A_t \begin{pmatrix} \Delta P_t\\ \Delta Q_t \end{pmatrix} +O(\epsilon^2) \\
        \!\!\!&=& \!\!\! A_t \left( A_k\cdots A_{m-1}u_m\epsilon+O(\epsilon^2) \right) +O(\epsilon^2) \\
        \!\!\!&=& \!\!\! A_t \cdots A_{m-1}u_m\epsilon +O(\epsilon^2),
	    \end{eqnarray*}
i.e., equation \eqref{Taylor} holds for $t=k-1$.
    \end{itemize}
According to (i) and (ii), equation \eqref{Taylor} holds for $t=m,\ldots,1$. This completes the proof of Lemma \ref{lemma: Taylor}.
    \end{proof}

The condition in \eqref{observation} can be simplified to a new condition that only depends on $w$, as given in the following corollary, using the Taylor expansion of $(\Delta P,\Delta Q)$ given in Lemma \ref{lemma: Taylor}.
	\begin{corollary}\label{lemma: key lemma}
Consider a one-line network. Let $w$ be a feasible point of the SOCP relaxation that violates \eqref{Line 7}. If
	\begin{equation}\label{key step}
	A_j(w)\cdots A_{i-1}(w)u_i>0, \quad 1\leq j \leq  i\leq n,
	\end{equation}
then the output $w'(\epsilon)$ of Algorithm \ref{algorithm} satisfies \eqref{observation} for sufficiently small $\epsilon$.
	\end{corollary}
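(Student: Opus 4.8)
The plan is to read this corollary as an immediate consequence of the Taylor expansion already established in Lemma \ref{lemma: Taylor}, so that essentially all of the analytic work is done and what remains is a sign bookkeeping argument. First I would unpack \eqref{observation}: since the partial order on $\mathbb{C}$ compares real and imaginary parts separately, the requirement $S_k'>S_k$ for $0\leq k\leq m-1$ is exactly $(\Delta P_k,\Delta Q_k)^T>0$ componentwise for $0\leq k\leq m-1$. Thus the goal reduces to producing an $\epsilon>0$ for which all these $m$ vectors are strictly positive.

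Next I would specialize the hypothesis \eqref{key step} to the single index $i=m$ (recall that $m$ is fixed once $w$ is fixed, via \eqref{m}), which yields
$$A_j(w)\cdots A_{m-1}(w)u_m>0,\qquad 1\leq j\leq m,$$
where the product is read as the identity when $j=m$, so that the endpoint instance is simply $u_m=(r_m,x_m)^T>0$. This is precisely the standing assumption of Lemma \ref{lemma: Taylor}, with $t$ ranging over $1,\ldots,m$. Only the $i=m$ slice of \eqref{key step} is actually needed for this particular $w$; the full range $1\leq j\leq i\leq n$ is carried because $m$ depends on $w$ and, more importantly, because the uniform form is what Step (S4) later derives from Condition C1.

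Invoking Lemma \ref{lemma: Taylor} then supplies the expansion \eqref{Taylor}, namely $(\Delta P_{t-1},\Delta Q_{t-1})^T=A_t\cdots A_{m-1}u_m\,\epsilon+O(\epsilon^2)$ for $t=1,\ldots,m$. Reindexing by $k=t-1$, this reads $(\Delta P_k,\Delta Q_k)^T=A_{k+1}\cdots A_{m-1}u_m\,\epsilon+O(\epsilon^2)$ for $0\leq k\leq m-1$, and the leading coefficient $A_{k+1}\cdots A_{m-1}u_m$ is strictly positive by the $i=m$ slice of \eqref{key step} taken at $j=k+1$. For each fixed $k$ a strictly positive linear coefficient dominates the $O(\epsilon^2)$ remainder once $\epsilon$ is small enough, so both components of $(\Delta P_k,\Delta Q_k)^T$ are positive for all $0<\epsilon<\epsilon_k$ for some threshold $\epsilon_k>0$. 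Taking $\epsilon$ smaller than both $\min_{0\leq k\leq m-1}\epsilon_k$ (a minimum over finitely many positive numbers, hence positive) and $\ell_m-|S_m|^2/v_m$ (so that $w'(\epsilon)$ remains well defined by Algorithm \ref{algorithm}) makes all $m$ vectors simultaneously positive, which is exactly \eqref{observation}.

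I do not expect a genuine obstacle here: the substance is the matrix recursion already encapsulated in Lemma \ref{lemma: Taylor}, and this corollary merely translates its positive leading term into the sign statement \eqref{observation}. The only points demanding care are the index shift $k=t-1$, the empty-product convention at $j=m$ (which should be stated explicitly so the endpoint case of \eqref{key step} is not overlooked), and the uniform choice of a single $\epsilon$ over the finitely many coordinates $k=0,\ldots,m-1$; all three are routine.
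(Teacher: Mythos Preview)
Your proposal is correct and follows essentially the same approach as the paper: specialize \eqref{key step} to $i=m$, invoke Lemma \ref{lemma: Taylor} to obtain the Taylor expansion with strictly positive leading coefficients, and conclude \eqref{observation} for sufficiently small $\epsilon$. You are merely more explicit than the paper about the empty-product convention at $j=m$, the index shift $k=t-1$, and the finite minimum needed to pick a single $\epsilon$; these are welcome clarifications but not a different argument.
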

    \begin{proof}
    Let $m$ be defined as \eqref{m} and substitute $i=m$ in \eqref{key step} to obtain
        $$A_j(w)\cdots A_{m-1}(w)u_m>0, \quad 1\leq j \leq  m.$$
    Then, it follows from Lemma \ref{lemma: Taylor} that
        $$(\Delta P_{t-1}, \Delta Q_{t-1})^T = A_t\cdots A_{m-1}u_m\epsilon+O(\epsilon^2)$$
    for $t=1,\ldots,m$. Hence, $\Delta S_k>0$ for $k=0,\ldots,m-1$, for sufficiently small $\epsilon$.
    \end{proof}

Corollary \ref{lemma: key lemma} implies that if there exists an optimal solution $w=(s,S,\ell,v,s_0)$ of the SOCP relaxation that violates \eqref{Line 7} but satisfies \eqref{key step}, then $w'(\epsilon)$ satisfies \eqref{observation} for sufficiently small $\epsilon$. If further, $w$ satisfies $\hat{v}_i(s)\leq\overline{v}_i$ for $i\in \hN^+$, then $w'(\epsilon)$ is feasible for the SOCP relaxation (Lemma \ref{lemma: observation}). If further, $f_0$ is strictly increasing, then $w'(\epsilon)$ has a smaller objective value than $w$ (Lemma \ref{lemma: observation}). This contradicts with $w$ being optimal.

	\begin{corollary}\label{corollary: key}
    Consider a one-line network. Assume that $f_0$ is strictly increasing; every optimal solution $w=(s,S,\ell,v,s_0)$ of the SOCP relaxation satisfies $\hat{v}_i(s)\leq\overline{v}_i$ for $i\in\hN^+$ and \eqref{key step}. Then the SOCP relaxation is exact.
	\end{corollary}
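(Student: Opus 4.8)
The plan is to argue by contradiction, chaining together the two results immediately preceding the statement. In the reformulated one-line setting, exactness (Definition \ref{def: exact}) amounts to every optimal solution satisfying the rank condition \eqref{Line 7}. So I would suppose the SOCP relaxation is \emph{not} exact and seek a contradiction: there must then exist an optimal solution $w=(s,S,\ell,v,s_0)$ that violates \eqref{Line 7}. With $m$ defined as in \eqref{m}, this violation is exactly what licenses the construction of the perturbed point $w'(\epsilon)$ via Algorithm \ref{algorithm} over the admissible range $\epsilon\in(0,\ell_m-|S_m|^2/v_m]$.

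The first step is to activate the standing hypothesis that $w$ satisfies \eqref{key step}. Since $w$ is feasible for the SOCP relaxation and violates \eqref{Line 7}, Corollary \ref{lemma: key lemma} applies verbatim and yields that $w'(\epsilon)$ satisfies the monotonicity condition \eqref{observation}, namely $S_k'>S_k$ for $0\leq k\leq m-1$, for all sufficiently small $\epsilon>0$. The second step is to feed this into Lemma \ref{lemma: observation}: using the other standing hypothesis that $\hat{v}_i(s)\leq\overline{v}_i$ for $i\in\hN^+$ together with \eqref{observation}, its first part certifies that $w'(\epsilon)$ is feasible for the SOCP relaxation, and, since $f_0$ is strictly increasing, its second part certifies that $w'(\epsilon)$ attains a strictly smaller objective value than $w$. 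This contradicts the optimality of $w$, so no optimal solution can violate \eqref{Line 7}, and the relaxation is exact.

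At this level the proof is essentially a bookkeeping assembly of Lemma \ref{lemma: observation} and Corollary \ref{lemma: key lemma}, so I do not expect a serious obstacle to remain: the real work was already discharged in those two results—the feasibility-and-objective-decrease analysis of $w'(\epsilon)$, and the Taylor linearization (Lemma \ref{lemma: Taylor}) that reduces the hard-to-verify condition \eqref{observation}, which couples $w$ and $w'$, to the $w$-only condition \eqref{key step}. The only point requiring care is reconciling the ``sufficiently small $\epsilon$'' demanded separately by Corollary \ref{lemma: key lemma} (for \eqref{observation}) and by Lemma \ref{lemma: observation} (for feasibility); since each merely imposes an upper bound on $\epsilon$ within $(0,\ell_m-|S_m|^2/v_m]$, one takes $\epsilon$ below the minimum of the two thresholds, and a single such $\epsilon$ then delivers a feasible point of strictly smaller cost, completing the contradiction.
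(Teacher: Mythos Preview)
Your proof is correct and follows the paper's argument essentially line for line: assume a violating optimal $w$ exists, invoke Corollary~\ref{lemma: key lemma} to get \eqref{observation} for sufficiently small $\epsilon$, then apply Lemma~\ref{lemma: observation} to produce a feasible point of strictly smaller objective, contradicting optimality. One minor remark: Lemma~\ref{lemma: observation} does not itself impose a smallness requirement on $\epsilon$---it only needs $\epsilon\in(0,\ell_m-|S_m|^2/v_m]$ and \eqref{observation}---so the single threshold from Corollary~\ref{lemma: key lemma} already suffices and no reconciliation of two thresholds is needed.
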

    \begin{proof}
    Assume the SOCP relaxation is not exact, then one can derive a contradiction as follows.

    Since the SOCP relaxation is not exact, there exists an optimal solution $w=(s,S,\ell,v,s_0)$ of the SOCP relaxation that violates \eqref{Line 7}. Since $w$ satisfies \eqref{key step}, one can pick a sufficiently small $\epsilon>0$ such that $w'(\epsilon)$ satisfies \eqref{observation} according to Corollary \ref{lemma: key lemma}. Further, the point $w'(\epsilon)$ is feasible for the SOCP relaxation and has a smaller objective value than $w$, according to Lemma \ref{lemma: observation}. This contradicts with $w$ being optimal for the SOCP relaxation.

    Hence, the SOCP relaxation is exact.
    \end{proof}

\subsection*{Step (S4)}
To complete the proof of Lemma \ref{lemma: line exact}, it suffices to prove the following lemma.
	\begin{lemma}\label{lemma: main condition}
	Consider a one-line network. If there exists $\overline{p}_i$ and $\overline{q}_i$ such that \eqref{relax constraints} holds for $i\in\hN^+$ and C1 holds, then \eqref{key step} holds for all feasible points of the SOCP relaxation.
	\end{lemma}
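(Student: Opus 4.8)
The plan is to prove, for each fixed pair $1\le j\le i\le n$, that the vector $A_j(w)\cdots A_{i-1}(w)u_i$ is strictly positive for \emph{every} feasible $w$, by bounding it below by a quantity that depends only on the data $(r,x,\overline p,\overline q,\underline v)$ and then invoking C1. The first step is to pass from the $w$-dependent entries of $A_i(w)$ to their worst-case values. For any feasible point $w=(s,S,\ell,v,s_0)$ of the SOCP relaxation, Lemma~\ref{lemma: v} gives $S_i\le\hat S_i(s)$, and \eqref{relax constraints} gives $s\le\overline p+\ii\overline q$, so $P_i\le\hat P_i(\overline p)$ and $Q_i\le\hat Q_i(\overline q)$; together with $v_i\ge\underline v_i$ from \eqref{Line 3} and $P_i^+\le\hat P_i^+(\overline p)$, $Q_i^+\le\hat Q_i^+(\overline q)$ this yields the entrywise bounds $1-\tfrac{2r_i\hat P_i^+(\overline p)}{\underline v_i}\le c_i(w)\le 1$, $0\le d_i(w)\le\tfrac{2r_i\hat Q_i^+(\overline q)}{\underline v_i}$, $0\le e_i(w)\le\tfrac{2x_i\hat P_i^+(\overline p)}{\underline v_i}$, and $1-\tfrac{2x_i\hat Q_i^+(\overline q)}{\underline v_i}\le f_i(w)\le 1$. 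Moreover C1 forces $a_l^1>0$ and $a_l^4>0$ for all $l$ (since $a_l^1 r_{l+1}>a_l^2 x_{l+1}\ge0$ with $r_{l+1}>0$, and symmetrically for $a_l^4$), so each diagonal lower bound is itself strictly positive.

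The obstacle is that $A_i(w)=\bigl(\begin{smallmatrix}c_i(w)&-d_i(w)\\-e_i(w)&f_i(w)\end{smallmatrix}\bigr)$ has nonpositive off-diagonal entries and is therefore \emph{not} order-preserving, so one cannot simply replace each matrix by its worst case and compare the products. To avoid this I would track the vector recursion directly: fix $i$, set $w^{(i)}:=u_i$ and $w^{(k)}:=A_k(w)\,w^{(k+1)}$ for $k=i-1,\dots,1$, write its components as $(\mu_k,\nu_k)$, and propagate two-sided bounds $\underline\mu_k\le\mu_k\le\overline\mu_k$ and $\underline\nu_k\le\nu_k\le\overline\nu_k$. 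Because the diagonal entries are at most $1$ and the off-diagonal entries are nonpositive, the \emph{upper} bounds collapse to the constants $\overline\mu_k=r_i$ and $\overline\nu_k=x_i$ for every $k$. Feeding these constants into the subtracted cross-terms produces the clean lower-bound recursions
\begin{align*}
\underline\mu_k&=\Bigl(1-\tfrac{2r_k\hat P_k^+(\overline p)}{\underline v_k}\Bigr)\underline\mu_{k+1}-\tfrac{2r_k\hat Q_k^+(\overline q)}{\underline v_k}\,x_i,\\
\underline\nu_k&=\Bigl(1-\tfrac{2x_k\hat Q_k^+(\overline q)}{\underline v_k}\Bigr)\underline\nu_{k+1}-\tfrac{2x_k\hat P_k^+(\overline p)}{\underline v_k}\,r_i,
\end{align*}
with $\underline\mu_i=r_i$, $\underline\nu_i=x_i$; note that a single pair of sequences, indexed by the target $i$, simultaneously bounds every product ending at $i-1$ and starting at any $j\le i$.

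It then remains to show that C1 forces $\underline\mu_k>0$ and $\underline\nu_k>0$ for all $1\le k\le i$. This serves two purposes at once: it is the hypothesis that validates the bound-propagation induction—bounding $\mu_k=c_k(w)\mu_{k+1}-d_k(w)\nu_{k+1}$ below by its worst case relies on $\mu_{k+1}\ge\underline\mu_{k+1}\ge0$ and $\nu_{k+1}\le\overline\nu_{k+1}=x_i$—and, read at $k=j$, it is the desired conclusion $A_j(w)\cdots A_{i-1}(w)u_i=w^{(j)}>0$. Unrolling the recursion, $\underline\mu_k>0$ is equivalent to a ``local'' threshold $\frac{r_i}{x_i}>\sum_{l=k}^{i-1}\bigl(\tfrac{2r_l\hat Q_l^+(\overline q)}{\underline v_l}\bigr)\big/\prod_{m=l}^{i-1}\bigl(1-\tfrac{2r_m\hat P_m^+(\overline p)}{\underline v_m}\bigr)$, whose products and sums begin at bus $k$, whereas C1 supplies the ``global'' threshold $\frac{r_i}{x_i}>\frac{a_{i-1}^2}{a_{i-1}^1}$, whose products and sums begin at bus $1$. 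Since each partial product $\prod_{m=l}^{i-1}\bigl(1-\tfrac{2r_m\hat P_m^+(\overline p)}{\underline v_m}\bigr)\ge\prod_{m=1}^{i-1}(\cdots)=a_{i-1}^1$ and the local sum drops the terms with $l<k$, the local threshold is dominated by the global one; hence C1 implies every local inequality and $\underline\mu_k>0$. The second half of C1, $a_{i-1}^3 r_i<a_{i-1}^4 x_i$, yields $\underline\nu_k>0$ by the symmetric computation with the roles of $(r,\hat P)$ and $(x,\hat Q)$ interchanged.

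The main difficulty I anticipate is precisely the mismatch between the global quantities $a_{i-1}^k$ appearing in C1 and the local partial products that arise when the matrix product $A_j\cdots A_{i-1}$ begins at an interior bus $j>1$. The non-monotonicity of $A_i(w)$ is what rules out a one-line comparison of matrix products and forces the two-sided vector bound with its constant upper bounds $\overline\mu_k=r_i$, $\overline\nu_k=x_i$; the delicate bookkeeping is to keep the direction of every inequality correct under the sign pattern of $A_i(w)$ while checking that the single global threshold from C1 dominates all of the local thresholds, for every $j$ and every intermediate index simultaneously.
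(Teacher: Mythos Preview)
Your argument is correct and reaches the same conclusion as the paper, but by a genuinely different route. The paper isolates an abstract matrix--product lemma (Lemma~\ref{lemma: matrix multiplication}): for matrices $\bigl(\begin{smallmatrix}c_j&-d_j\\-e_j&f_j\end{smallmatrix}\bigr)$ with $0<c_j\le1$, $d_j\ge0$, $e_j\ge0$, $0<f_j\le1$, positivity of the single ``aggregate'' vector $\bigl(\begin{smallmatrix}\prod c_j&-\sum d_j\\-\sum e_j&\prod f_j\end{smallmatrix}\bigr)u$ forces positivity of every tail product. That lemma is proved by a double induction with a merging trick (combine two adjacent matrices into one whose entries still satisfy the hypotheses), and Lemma~\ref{lemma: main condition} then follows by checking the entrywise bounds on $A_j(w)$ and invoking C1 to get the aggregate inequality. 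You instead propagate a two-sided bound on the vector recursion directly, the key observation being that the upper bounds collapse to the constants $r_i,x_i$ (since $c_k(w),f_k(w)\le1$ and the cross terms subtract a nonnegative quantity once positivity is maintained inductively); this yields explicit closed-form lower bounds whose positivity reduces to ``local'' thresholds that are visibly dominated by the ``global'' threshold $a_{i-1}^2/a_{i-1}^1$ from C1. Your approach is more elementary and self-contained for this lemma; the paper's approach factors out a clean reusable statement that it invokes again in the tree case and in Appendix~C.
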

Before proving Lemma \ref{lemma: main condition}, we first show how Lemma \ref{lemma: line exact} follows from Lemma \ref{lemma: main condition}.

\noindent{\it Proof of Lemma \ref{lemma: line exact}. }
If the conditions in Lemma \ref{lemma: line exact} hold, then the conditions in Lemma \ref{lemma: main condition} hold. It follows from Lemma \ref{lemma: main condition} that \eqref{key step} holds for all optimal solution of the SOCP relaxation.

Then, the conditions in Corollary \ref{corollary: key} hold, and it follows that the SOCP relaxation is exact. This completes the proof of Lemma \ref{lemma: line exact}. $\hfill\Box$

Now we come back to Lemma \ref{lemma: main condition}. To prove it, we need the following lemma.
    \begin{lemma}\label{lemma: matrix multiplication}
	Given $i\geq1$; $c$, $d$, $e$, $f\in\mathbb{R}^i$ such that $0<c\leq1$, $d\geq0$, $e\geq0$, $0<f\leq1$ componentwise; and $u\in\mathbb{R}^2$ that satisfies $u>0$. If
	\begin{equation}\label{lemma 7 condition}
	\begin{pmatrix}
	\displaystyle \prod_{j=1}^i c_j & \displaystyle -\sum_{j=1}^i d_j\\
	\displaystyle -\sum_{j=1}^i e_j & \displaystyle \prod_{j=1}^i f_j
	\end{pmatrix}
	u>0,
	\end{equation}
then
    \begin{equation}\label{lemma 7 result}
    \begin{pmatrix}
	c_j & -d_j\\
	-e_j & f_j
	\end{pmatrix}\cdots
    \begin{pmatrix}
	c_i & -d_i\\
	-e_i & f_i
	\end{pmatrix} u > 0
    \end{equation}
for $j=1,\ldots,i$.
	\end{lemma}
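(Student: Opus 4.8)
The plan is to compare each partial product $M_j\cdots M_i$ against the crude aggregate appearing in the hypothesis. Write $M_k=\begin{pmatrix} c_k & -d_k\\ -e_k & f_k\end{pmatrix}$, let $A_{j:i}:=M_jM_{j+1}\cdots M_i$ (with the empty product $A_{i+1:i}:=I$), and set
\[
\bar M_{j:i}:=\begin{pmatrix} \prod_{k=j}^i c_k & -\sum_{k=j}^i d_k\\ -\sum_{k=j}^i e_k & \prod_{k=j}^i f_k\end{pmatrix},
\]
so that the hypothesis reads $\bar M_{1:i}u>0$ and the goal is $A_{j:i}u>0$ for $1\le j\le i$. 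I would obtain the goal from two inequalities: (i) the lower bound $A_{j:i}u\ge \bar M_{j:i}u$ componentwise, and (ii) the monotonicity $\bar M_{j:i}u\ge \bar M_{1:i}u$. Chaining (i) and (ii) with the hypothesis yields $A_{j:i}u\ge\bar M_{1:i}u>0$.

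Inequality (ii) is the routine part. Since $0<c_k\le1$ and $0<f_k\le1$, dropping the factors indexed $k<j$ only enlarges the diagonal products, while $d_k,e_k\ge0$ means dropping those terms only shrinks the off-diagonal sums in magnitude; hence $\bar M_{j:i}\ge\bar M_{1:i}$ entrywise. As $u>0$, multiplying a componentwise matrix inequality by a nonnegative vector preserves it, so $\bar M_{j:i}u\ge\bar M_{1:i}u>0$.

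Inequality (i) is the crux, and the obstacle is that the corresponding \emph{matrix} inequality $A_{j:i}\ge\bar M_{j:i}$ is false in general: expanding a triple product generates cross terms such as $d_je_{j+1}d_{j+2}$ that make the $(1,2)$ entry of $A_{j:i}$ strictly more negative than that of $\bar M_{j:i}$. Thus (i) must be argued at the vector level, exploiting $u\ge0$. I would run a simultaneous backward induction on $j$ from $i+1$ down to $1$ carrying three coupled statements about $v^{(j)}:=A_{j:i}u=(p_j,q_j)^\top$: the lower bound $v^{(j)}\ge\bar M_{j:i}u$, the upper bound $v^{(j)}\le u$, and positivity $v^{(j)}>0$. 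The base case $j=i+1$ is immediate since $v^{(i+1)}=u$. In the inductive step I would first deduce $v^{(j)}=M_jv^{(j+1)}\le u$ from $0<v^{(j+1)}\le u$ together with $c_j,f_j\le1$ and $d_j,e_j\ge0$ (the off-diagonal contributions only subtract); then deduce the lower bound $v^{(j)}\ge\bar M_{j:i}u$ from the previous lower bound $v^{(j+1)}\ge\bar M_{j+1:i}u$ \emph{and} the previous upper bound $v^{(j+1)}\le u$, the latter being exactly what controls the troublesome terms $-d_jq_{j+1}$ and $-e_jp_{j+1}$ and reduces the desired inequality, in each component, to a sum of manifestly nonnegative quantities; finally positivity $v^{(j)}>0$ follows from the freshly established lower bound combined with part (ii). The main difficulty, and the reason any one-sided estimate fails, is precisely this coupling: the lower and upper bounds feed into each other and so must be propagated together, while positivity is bootstrapped from (ii) at every level.
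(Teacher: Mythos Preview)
Your proof is correct and takes a genuinely different route from the paper's. The paper proves the lemma by induction on the length $i$: for $i=K+1$ it first handles $j\ge2$ by dropping the leftmost factor (using exactly your monotonicity (ii)), and then handles $j=1$ by \emph{merging} indices $1$ and $2$ into a single index with $c_1'=c_1c_2$, $d_1'=d_1+d_2$, $e_1'=e_1+e_2$, $f_1'=f_1f_2$, applying the induction hypothesis to this shorter system, and finishing with the entrywise two-matrix inequality $M_1M_2\ge\begin{pmatrix}c_1c_2&-(d_1+d_2)\\-(e_1+e_2)&f_1f_2\end{pmatrix}$ applied to a vector already known to be positive.

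Your approach instead runs a single backward induction on $j$ carrying the sandwich $\bar M_{j:i}u\le v^{(j)}\le u$ together with positivity. What this buys is transparency: the sandwich isolates exactly why the crude aggregate $\bar M$ controls the true product---the negative off-diagonal contributions $-d_jq_{j+1}$ and $-e_jp_{j+1}$ are bounded using $v^{(j+1)}\le u$, while the positive diagonal contributions are bounded using $v^{(j+1)}\ge\bar M_{j+1:i}u$, and $c_j,f_j\le1$ is precisely what makes both directions close. The paper's merging trick is slicker in that it localizes the computation to a single $2\times2$ product, but your argument avoids the somewhat ad hoc index-merging device and makes explicit the two-sided estimate that is really driving the result. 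Both proofs ultimately hinge on the same structural facts ($0<c_k,f_k\le1$ and $d_k,e_k\ge0$); yours just organizes them into a cleaner invariant.
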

	
	\begin{proof}
We prove Lemma \ref{lemma: matrix multiplication} by mathematical induction on $i$.
	\begin{itemize}
	\item[i)] When $i=1$, Lemma \ref{lemma: matrix multiplication} holds trivially.
	\item[ii)] Assume that Lemma \ref{lemma: matrix multiplication} holds for $i=K$ ($K\geq1$), and we will show that Lemma \ref{lemma: matrix multiplication} also holds for $i=K+1$. When $i=K+1$, if \eqref{lemma 7 condition} holds, i.e.,
    \begin{eqnarray*}
	\begin{pmatrix}
	\displaystyle \prod_{j=1}^{K+1}c_j & \displaystyle -\sum_{j=1}^{K+1}d_j\\
	\displaystyle -\sum_{j=1}^{K+1}e_j & \displaystyle \prod_{j=1}^{K+1}f_j
	\end{pmatrix}
	u > 0,
	\end{eqnarray*}
we prove that \eqref{lemma 7 result} holds for $j=1,\ldots,K+1$ as follows.

First prove that \eqref{lemma 7 result} holds for $j=2,\ldots,K+1$. The idea is to construct some $c',d',e',f'\in\mathbb{R}^K$ and apply the induction hypothesis. The construction is
    \begin{eqnarray*}
    c' &=& (c_2,~c_3,~\ldots,~c_{K+1}),\\
    d' &=& (d_2,~d_3,~\ldots,~d_{K+1}),\\
    e' &=& (e_2,~e_3,~\ldots,~e_{K+1}),\\
    f' &=& (f_2,~f_3,~\ldots,~f_{K+1}).
    \end{eqnarray*}
Clearly, $c',d',e',f'$ satisfies $0<c'\leq1$, $d'\geq0$, $e'\geq0$, $0<f'\leq1$ componentwise and
	\begin{eqnarray*}
	\begin{pmatrix}
	\displaystyle \prod_{j=1}^Kc_j' & \displaystyle -\sum_{j=1}^Kd_j'\\
	\displaystyle -\sum_{j=1}^Ke_j' & \displaystyle \prod_{j=1}^Kf_j'
	\end{pmatrix}
	u
    &\!\!\!=& \!\!\!\begin{pmatrix}
	\displaystyle \prod_{j=2}^{K+1}c_j & \displaystyle -\sum_{j=2}^{K+1}d_j\\
	\displaystyle -\sum_{j=2}^{K+1}e_j & \displaystyle \prod_{j=2}^{K+1}f_j
	\end{pmatrix}
	u\\
	&\!\!\!\!\!\!\!\!\!\!\!\!\!\!\!\!\!\!\!\!\!\!\!\!\!\!\!\!\!\!\geq&
	\!\!\!\!\!\!\!\!\!\!\!\!\!\!\!\!\!\!\!\!\begin{pmatrix}
	\displaystyle \prod_{j=1}^{K+1}c_j & \displaystyle -\sum_{j=1}^{K+1}d_j\\
	\displaystyle -\sum_{j=1}^{K+1}e_j & \displaystyle \prod_{j=1}^{K+1}f_j
	\end{pmatrix}
	u > 0.
	\end{eqnarray*}
Apply the induction hypothesis to obtain that
    $$\begin{pmatrix}
	c_j' & -d_j'\\
	-e_j' & f_j'
	\end{pmatrix}\cdots
    \begin{pmatrix}
	c_K' & -d_K'\\
	-e_K' & f_K'
	\end{pmatrix} u > 0$$
for $j=1,\ldots,K$, i.e., \eqref{lemma 7 result} holds for $j=2,\ldots,K+1$.

Next prove that \eqref{lemma 7 result} holds for $j=1$. The idea is still to construct some $c',d',e',f'\in\mathbb{R}^K$ and apply the induction hypothesis. The construction is
    \begin{eqnarray*}
    c' &=& (c_1c_2,~c_3,~\ldots,~c_{K+1}),\\
    d' &=& (d_1+d_2,~d_3,~\ldots,~d_{K+1}),\\
    e' &=& (e_1+e_2,~e_3,~\ldots,~e_{K+1}),\\
    f' &=& (f_1f_2,~f_3,~\ldots,~f_{K+1}).
    \end{eqnarray*}
Clearly, $c',d',e',f'$ satisfies $0<c'\leq1$, $d'\geq0$, $e'\geq0$, $0<f'\leq1$ componentwise and
    \begin{eqnarray*}
	\begin{pmatrix}
	\displaystyle \prod_{j=1}^Kc_j' & \displaystyle -\sum_{j=1}^Kd_j'\\
	\displaystyle -\sum_{j=1}^Ke_j' & \displaystyle \prod_{j=1}^Kf_j'
	\end{pmatrix}
	u
    &\!\!\!=& \!\!\!\begin{pmatrix}
	\displaystyle \prod_{j=1}^{K+1}c_j & \displaystyle -\sum_{j=1}^{K+1}d_j\\
	\displaystyle -\sum_{j=1}^{K+1}e_j & \displaystyle \prod_{j=1}^{K+1}f_j
	\end{pmatrix}
	u\\
	&\!\!\!>& \!\!\!0.
	\end{eqnarray*}
Apply the induction hypothesis to obtain that
    \begin{eqnarray*}
    v_2':=\begin{pmatrix}
	c_2' & -d_2'\\
	-e_2' & f_2'
	\end{pmatrix}\cdots
    \begin{pmatrix}
	c_K' & -d_K'\\
	-e_K' & f_K'
	\end{pmatrix} u > 0,\\
    v_1':=\begin{pmatrix}
	c_1' & -d_1'\\
	-e_1' & f_1'
	\end{pmatrix}\cdots
    \begin{pmatrix}
	c_K' & -d_K'\\
	-e_K' & f_K'
	\end{pmatrix} u > 0.
    \end{eqnarray*}
It follows that
    \begin{eqnarray*}
    && \!\!\!\!\!\!\!\!\!\!\!\!\!\!\!\!\!\!\!\!\begin{pmatrix}
	c_1 & -d_1\\
	-e_1 & f_1
	\end{pmatrix}\cdots
    \begin{pmatrix}
	c_{K+1} & -d_{K+1}\\
	-e_{K+1} & f_{K+1}
	\end{pmatrix} u \\
    &=& \begin{pmatrix}
	c_1 & -d_1\\
	-e_1 & f_1
	\end{pmatrix}
    \begin{pmatrix}
	c_2 & -d_2\\
	-e_2 & f_2
	\end{pmatrix} \\
    && \quad
    \begin{pmatrix}
	c_3 & -d_3\\
	-e_3 & f_3
	\end{pmatrix}
\cdots
    \begin{pmatrix}
	c_{K+1} & -d_{K+1}\\
	-e_{K+1} & f_{K+1}
	\end{pmatrix} u \\
    &=& \begin{pmatrix}
	c_1 & -d_1\\
	-e_1 & f_1
	\end{pmatrix}
    \begin{pmatrix}
	c_2 & -d_2\\
	-e_2 & f_2
	\end{pmatrix} \\
    && \quad
    \begin{pmatrix}
	c_2' & -d_2'\\
	-e_2' & f_2'
	\end{pmatrix}
\cdots
    \begin{pmatrix}
	c_K' & -d_K'\\
	-e_K' & f_K'
	\end{pmatrix} u \\
    &=& \begin{pmatrix}
	c_1 & -d_1\\
	-e_1 & f_1
	\end{pmatrix}
    \begin{pmatrix}
	c_2 & -d_2\\
	-e_2 & f_2
	\end{pmatrix} v_2' \\
    &=& \begin{pmatrix}
	c_1c_2+d_1e_2 & -c_1d_2-d_1f_2\\
	-e_1c_2-f_1e_2 & f_1f_2+e_1d_2
	\end{pmatrix} v_2' \\
    &\geq& \begin{pmatrix}
	c_1c_2 & -d_2-d_1\\
	-e_1-e_2 & f_1f_2
	\end{pmatrix} v_2' \\
    &=& \begin{pmatrix}
	c_1' & -d_1'\\
	-e_1' & f_1'
	\end{pmatrix} v_2' \\
&=& v_1'>0,
    \end{eqnarray*}
i.e., \eqref{lemma 7 result} holds for $j=1$.

To this end, we have proved that \eqref{lemma 7 result} holds for $j=1,\ldots,K+1$, i.e., Lemma \ref{lemma: matrix multiplication} also holds for $i=K+1$.
	\end{itemize}
According to (i) and (ii), Lemma \ref{lemma: matrix multiplication} holds for $i\geq1$.
	\end{proof}

In the end, we prove Lemma \ref{lemma: main condition} with the result in Lemma \ref{lemma: matrix multiplication}.

\noindent{\it Proof of Lemma \ref{lemma: main condition}.}
    Assume that there exists $\overline{p}_i$ and $\overline{q}_i$ such that \eqref{relax constraints} holds for $i\in\hN^+$; and that C1 holds. According to Lemma \ref{lemma: matrix multiplication}, it suffices to prove that for any feasible point $w=(s=p+\ii q,S=P+\ii Q,\ell,v)$ of the SOCP relaxation, the following inequalities hold:
    \begin{itemize}
    \item[i)] $0<c_j(w)\leq 1, d_j(w)\geq 0, e_j(w)\geq 0, 0<f_j(w)\leq1$ for $j=1,\ldots,n-1$;
    \item[ii)]
    \[\begin{pmatrix}
	\displaystyle \prod_{j=1}^{i-1} c_j(w) & \displaystyle  -\sum_{j=1}^{i-1}d_j(w)\\
	\displaystyle  -\sum_{j=1}^{i-1} e_j(w) & \displaystyle  \prod_{j=1}^{i-1}f_j(w)
	\end{pmatrix}
	\begin{pmatrix}
	r_i \\ x_i
	\end{pmatrix}
	> 0\]
for $i=2,\ldots, n$.
    \end{itemize}

	First check (i). It follows from $a_{i-1}^1r_i>a_{i-1}^2x_i\geq0$ (see C1) for $i\in \hN^+$ that $a_{i-1}^1>0$ for $i\in \hN^+$. Then, it follows from
    $$0<a_{i-1}^1=\prod_{j=1}^{i-1}\left( 1-\frac{2r_j\hat{P}_j^+(\overline{p})}{\underline{v}_j} \right)$$
    for $i=1,\ldots,n$ that
    $$1-2r_j\hat{P}_j^+(\bar{p})/\underline{v}_j>0$$
for $j=1,\ldots,n-1$. Since $\ell_i\geq0$ for $i\in\hN^+$, it follows from \eqref{Line 4} that
	\begin{eqnarray*}
	S_{i-1} = S_i - z_i\ell_i+s_{i-1} \leq S_i + s_{i-1}
	\end{eqnarray*}
for $i\in\hN^+$. Hence, one has
	\begin{eqnarray*}
	S_i &\!\!\!\leq& \!\!\!S_{i+1}+s_i ~\leq~ S_{i+2}+s_{i+1}+s_i \\
    &\!\!\!\leq& \!\!\!\ldots ~\leq~ S_{n}+s_{n-1}+\ldots+s_i \\
    &\!\!\! =& \!\!\!\sum_{j= i}^n s_j ~=~ \hat{S}_i(s)
	\end{eqnarray*}
for $i\in\hN$. It follows that $P_j\leq \hat{P}_j(p)\leq \hat{P}_j(\overline{p})$ and therefore $P_j^+\leq \hat{P}_j^+(p)\leq \hat{P}_j^+(\overline{p})$ for $j\in\hN$. It follows that
	\begin{eqnarray*}
	&& c_j(w) = 1-\frac{2r_jP_j^+}{v_j}\in \left[1-\frac{2r_j\hat{P}_j^+(\bar{p})}{\underline{v}_j},1\right]\subseteq \left(0,1\right],\\
	&& e_j(w) = \frac{2x_jP_j^+}{v_j}\in \left[0,\frac{2x_j\hat{P}_j^+(\bar{p})}{\underline{v}_j}\right] \subseteq \left[0,\infty\right)
	\end{eqnarray*}
for $j=1,\ldots,n-1$. Similarly,
	\begin{eqnarray*}
	&& d_j(w) \in \left[ 0, \frac{2r_j\hat{Q}_j^+(\bar{q})}{\underline{v}_j}\right]\subseteq \left[0,\infty\right),\\
	&& f_j(w) \in \left[1-\frac{2x_j\hat{Q}_j^+(\bar{q})}{\underline{v}_j}, 1\right]\subseteq \left(0,1\right]
	\end{eqnarray*}
for $j=1,\ldots,n-1$. Hence, the inequalities in (i) hold.

Next check (ii). One has
    \begin{eqnarray*}
	\prod_{j=1}^{i-1} c_j(w)r_i
    &=& \prod_{j=1}^{i-1} \left(1-\frac{2r_jP_j^+}{v_j}\right) r_i \\
    &\geq& \prod_{j=1}^{i-1} \left(1-\frac{2r_j\hat{P}_j^+(\bar{p})}{\underline{v}_j}\right) r_i \\
    &=& a_{i-1}^1r_i \\
    &>& a_{i-1}^2x_i \qquad \text{(Condition C1)}\\
    &=& \sum_{j=1}^{i-1}\frac{2r_j\hat{Q}_j^+(\overline{q})}{\underline{v}_j}x_i \\
    &\geq& \sum_{j=1}^{i-1}\frac{2r_jQ_j^+}{\underline{v}_j}x_i \\
    &=& \sum_{j=1}^{i-1}d_j(w)x_i
    \end{eqnarray*}
for $i=2,\ldots,n$. Similarly,
    \begin{eqnarray*}
	\sum_{j=1}^{i-1} e_j(w)r_i < \prod_{j=1}^{i-1}f_j(w)x_i
	\end{eqnarray*}
for $i=2,\ldots,n$. Hence, the inequalities in (ii) hold.

To this end, we have proved that the inequalities in (i) and (ii) hold. Apply Lemma \ref{lemma: matrix multiplication} to obtain
    \[\begin{pmatrix}
	c_j(w) & -d_j(w)\\
	-e_j(w) & f_j(w)
	\end{pmatrix}
    \cdots
    \begin{pmatrix}
	c_{i-1}(w) & -d_{i-1}(w)\\
	-e_{i-1}(w) & f_{i-1}(w)
	\end{pmatrix}
	\begin{pmatrix}
	r_i \\ x_i
	\end{pmatrix}
	> 0\]
for $2\leq i\leq n$ and $1\leq j<i$, i.e.,
    $$A_j(w)\cdots A_{i-1}(w)u_i>0$$
for $1\leq j<i\leq n$. When $i=j=1,\ldots,n$, one has
    $$A_j(w)\cdots A_{i-1}(w)u_i = u_i>0.$$
Hence, $A_j(w)\cdots A_{i-1}(w)u_i>0$ for $1\leq j\leq i\leq n$. This completes the proof of Lemma \ref{lemma: main condition}. $\hfill\Box$

\subsection{Proof for tree networks (Lemma \ref{lemma: exact})}\label{app: tree}
To this end, we have proved Lemma \ref{lemma: exact} for one-line networks (Lemma \ref{lemma: line exact}). The proof for general tree networks is similar.

If the SOCP relaxation is not exact, then there exists an optimal solution $w=(s,S,\ell,v,s_0)$ of the SOCP relaxation that violates \eqref{BFM 4}. A contradiction can be derived through the same steps as for one-line networks, which are described in turn.
	\begin{itemize}
	\item[(S1')] Construct a new point $w'(\epsilon)$ according to Algorithm \ref{algorithm tree} for sufficiently small $\epsilon>0$.

	\item[(S2')] Prove that if $\hat{v}_i(s)\leq \overline{v}_i$ for $i\in \hN^+$ and $w'(\epsilon)$ satisfies \eqref{observation tree}, then $w'(\epsilon)$ is feasible for the SOCP relaxation. If further, $f_0$ is strictly increasing, then $w'(\epsilon)$ has a smaller objective value than $w$, which contradicts with $w$ being optimal.

	\item[(S3')] Linearize $w'(\epsilon)$ around $\epsilon=0$ and transform \eqref{observation tree} into \eqref{key step tree}, a condition that depends only on $w$ and not on $w'(\epsilon)$. If \eqref{key step tree} holds, then \eqref{observation tree} holds for sufficiently small $\epsilon$.

	\item[(S4')] Prove that if there exists $\overline{p}_i$ and $\overline{q}_i$ such that \eqref{relax constraints} holds for $i\in\hN^+$, then C1 implies \eqref{key step tree}.
	\end{itemize}

\subsection*{Step (S1')}
Let $w=(s,S,\ell,v,s_0)$ be an arbitrary feasible point of the SOCP relaxation that violates \eqref{BFM 4}, then there exists some line $m\rightarrow \beta$ such that
	 \begin{equation}\label{m tree}
	 \begin{cases}
	 \ell_{m\beta}>|S_{m\beta}|^2/v_m;\\
	 \ell_{kl}=|S_{kl}|^2/v_k, \quad (k,l)\in\hP_\beta.
	 \end{cases}
	 \end{equation}
For every $\epsilon$ such that $0<\epsilon\leq \ell_{m\beta}-|S_{m\beta}|^2/v_m$, one can construct a point $w'(\epsilon)=(s',S',\ell',v',s_0')$ according to Algorithm \ref{algorithm tree}.

To state the algorithm, assume that the buses on path $\mathcal{P}_m$ are indexed $m,m-1,\ldots,0$ without loss of generality, i.e.,
    \begin{equation}\label{path}
    \mathcal{P}_m=\{(m,m-1),\ldots,(1,0)\}.
    \end{equation}
Then $\beta = m-1$. 

	\begin{algorithm}[!h]
	\caption{Construct a new point}
	\label{algorithm tree}
	
	\begin{algorithmic}
	\REQUIRE ~\\
	$w=(s,S,\ell,v,s_0)$, feasible for SOCP but violates \eqref{BFM 4};\\
	$m$ such that $\ell_{m,m-1}>|S_{m, m-1}|^2/v_m$ and $\ell_{k,k-1}=|S_{k,k-1}|^2/v_k$ for $1\leq k<m$;\\
	$\epsilon\in\left(0,\ell_{m,m-1}-|S_{m,m-1}|^2/v_m\right]$.
	\ENSURE ~\\
	$w'(\epsilon)=(s',S',\ell',v',s_0')$.
	\end{algorithmic}
	
	\vspace{-0.1in}
	$\line(1,0){242}$
	
	\begin{enumerate}	
	\item Construct $s'$: $s'\leftarrow s$.
	
	\item Construct $S'$, $\ell'$, and $s_0'$ ($S=P+\ii Q$, $S'=P'+\ii Q'$):
	\begin{itemize}
	\item for $(k,l)\notin\mathcal{P}_m$, $\ell_{kl}' \leftarrow \ell_{kl}$.
    	\item for $(k,l)\notin\mathcal{P}_{m-1}$, $S_{kl}' \leftarrow S_{kl}$.
	\item for $(k,l)\in\hP_m$, do the following recursively for $k=m,\ldots,1$:
        \begin{align*}
        & \!\!\!\!\!\!\!\!\!\!\!\!\!\!\!\!\!\!\!\! \ell_{k,k-1}' \leftarrow \begin{cases}\ell_{k,k-1}-\epsilon & k=m\\ \frac{\footnotesize \max\left\{P_{k,k-1}'^2,P_{k,k-1}^2\right\} + \max\left\{Q_{k,k-1}'^2,Q_{k,k-1}^2\right\}}{\displaystyle v_k} & k<m,\end{cases} \\
	& \!\!\!\!\!\!\!\!\!\!\!\!\!\!\!\!\!\!\!\!\begin{cases} S_{k-1,k-2}' \leftarrow S_{k,k-1}'-z_{k,k-1}\ell_{k,k-1}'+s_{k-1}' & k\neq1 \\
        s_0' \leftarrow z_{10}\ell_{10}' - S_{10}' & k=1.
        \end{cases}
        \end{align*}
	
	\end{itemize}
	
	\item Construct $v'$:
	\begin{itemize}
	\item set $v_0'\leftarrow v_0$, $\hN_\mathrm{set}=\{0\}$, $\hN_\mathrm{unset}=\{1,\ldots,n\}$;
	
	\item while $\hN_\mathrm{unset}\neq\emptyset$,
	\begin{algorithmic}
    \STATE pick a $(k,l)$ such that $l\in\hN_\mathrm{set}$ and $k\in\hN_\mathrm{unset}$;
	\STATE $v_k' \leftarrow v_l'+2\re(z_{kl}^*S_{kl}')-|z_{kl}|^2\ell_{kl}'$;
    \STATE $\hN_\mathrm{set}=\hN_\mathrm{set}\cup\{k\}$;
    \STATE $\hN_\mathrm{unset}=\hN_\mathrm{unset}\backslash\{k\}$;
	\end{algorithmic}
	\end{itemize}
	
	\end{enumerate}
	
	\end{algorithm}

Algorithm \ref{algorithm tree} extends Algorithm \ref{algorithm} to general tree networks. It keeps $s$ unchanged, i.e., $s'=s$, therefore $w'$ satisfies \eqref{BFM 3}. The main step in Algorithm \ref{algorithm tree} is constructing $S'$, $\ell'$, and $s_0'$, after which $v'$ is simply constructed to satisfy \eqref{BFM 1}.

After initializing $\ell_{kl}' = \ell_{kl}$ for $(k,l)\notin\hP_m$ and $S_{kl}' = S_{kl}$ for $(k,l)\notin\mathcal{P}_{m-1}$, \eqref{BFM 2} is satisfied for $i\notin\hP_{m-1}$. Equation \eqref{BFM 2} is also satisfied for $i<m$ since $S_{k-1}'$ (or $s_0'$) are then set recursively for $k=m,\ldots,1$ to satisfy \eqref{BFM 2}.

The construction of $\ell_{kl}'$ on $\hP_m$ is the same as for one-line networks: reduce $\ell_{k,k-1}$ to $\ell_{k,k-1}'=\ell_{k,k-1}-\epsilon$ if $k=m$; and modify $\ell_{k,k-1}$  so that the constraints in \eqref{relax} remain satisfied (assuming $v_k'= v_k$) after $S_{k,k-1}$ is changed if $k<m$.

According to the construction in Algorithm \ref{algorithm tree}, $w'(\epsilon)$ may only violate \eqref{BFM v} and \eqref{relax} out of all the constraints in the SOCP relaxation.

\subsection*{Step (S2')}
It suffices to prove that under the conditions given in Lemma \ref{lemma: exact}, for any optimal solution $w$ of the SOCP relaxation that violates \eqref{BFM 4}, there exists $\epsilon>0$ such that $w'(\epsilon)$ is feasible for the SOCP relaxation and has a smaller objective value than $w$. The following lemma gives a sufficient condition under which $w'(\epsilon)$ is indeed feasible and ``better''. To state the lemma, let $S_{0,-1}\eqdef -s_0$ denote the power that the substation injects to the main grid.

	\begin{lemma}\label{lemma: observation tree}
	Given a feasible point $w=(s,S,\ell,v,s_0)$ of the SOCP relaxation that violates \eqref{BFM 4}, let $m$ be defined as \eqref{m tree}, $\epsilon\in(0,\ell_{m,m-1}-|S_{m,m-1}|^2/v_m]$, and $w'(\epsilon)=(s',S',\ell',v',s_0')$ be the output of Algorithm \ref{algorithm tree}.
    \begin{itemize}
    \item If $\hat{v}_i(s)\leq \overline{v}_i$ for $i\in \hN^+$, and
	\begin{equation}\label{observation tree}
	S_{i,i-1}' > S_{i,i-1}, \qquad 0\leq i\leq m-1,
	\end{equation}
then $w'(\epsilon)$ is feasible for the SOCP relaxation.
    \item If further, $f_0$ is strictly increasing, then $w'(\epsilon)$ has a smaller objective value than $w$, i.e.,
        $$\sum_{i=0}^n f_i(\re(s_i')) < \sum_{i=0}^nf_i(\re(s_i)).$$
    \end{itemize}
	\end{lemma}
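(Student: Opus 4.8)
The plan is to mirror the one-line argument (Lemma \ref{lemma: observation}) by exploiting the fact that Algorithm \ref{algorithm tree} perturbs $(S,\ell,s_0)$ only along the single path $\hP_m$ in \eqref{path}, leaving every subtree hanging off this path untouched and keeping $s'=s$. As noted in Step (S1'), $w'(\epsilon)$ automatically satisfies the equalities \eqref{BFM 1}, \eqref{BFM 2} and the injection constraints \eqref{BFM 3}, so only the voltage bounds \eqref{BFM v} and the conic constraints \eqref{relax} remain to be verified. The first thing I would establish is a reduction observation: for any bus $i$, let $t(i)$ be the bus at which the path from $i$ to the root $0$ first meets $\hP_m$. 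Along the segment from $i$ to $t(i)$ none of $S',\ell'$ is altered, so every voltage drop on that segment is unchanged, and hence $\Delta v_i := v_i'-v_i = \Delta v_{t(i)}$. This collapses the whole tree onto the path $\hP_m$ and lets me reuse the one-line estimates verbatim for the path buses.

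Next I would prove $\Delta v_i\ge 0$ everywhere, which yields the lower bound $v_i'\ge v_i\ge\underline{v}_i$. The point is that the off-path children of a path bus $k-1$ keep their inflows $S_{h,k-1}-z_{h,k-1}\ell_{h,k-1}$ fixed, so balancing \eqref{BFM 2} at $k-1$ forces $\Delta S_{k-1,k-2}=\Delta S_{k,k-1}-z_{k,k-1}\Delta\ell_{k,k-1}$, exactly the recursion of the one-line case. Combined with \eqref{observation tree}, this gives $\Delta v_k-\Delta v_{k-1}=\re(z_{k,k-1}^*\Delta S_{k,k-1})+\re(z_{k,k-1}^*\Delta S_{k-1,k-2})\ge 0$ along $\hP_m$, so $\Delta v$ is nondecreasing from $\Delta v_0=0$; the reduction observation then extends $\Delta v\ge 0$ to every bus. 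The conic constraints \eqref{relax} follow as in one-line: on off-path lines $S',\ell'$ are unchanged and $v'\ge v$ gives $\ell'=\ell\ge|S|^2/v\ge|S'|^2/v'$; on the line $(m,m-1)$ the choice $\epsilon\le\ell_{m,m-1}-|S_{m,m-1}|^2/v_m$ suffices; and on the remaining path lines the $\max$-based definition of $\ell'_{k,k-1}$ gives $\ell'_{k,k-1}\ge|S'_{k,k-1}|^2/v_k\ge|S'_{k,k-1}|^2/v_k'$.

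For the upper voltage bound I would avoid re-running the telescoping and instead observe that, having just verified that $w'(\epsilon)$ satisfies the power-balance equalities and all of \eqref{relax}, it meets exactly the hypotheses of Lemma \ref{lemma: v} (in the transformed coordinates established in Appendix \ref{app: transformation}). Hence $v_i'=W_{ii}'\le\hat{W}_{ii}(s')=\hat{v}_i(s')=\hat{v}_i(s)\le\overline{v}_i$, where the last step uses $s'=s$ and the standing assumption $\hat{v}_i(s)\le\overline{v}_i$. This establishes feasibility of $w'(\epsilon)$. The objective claim is then immediate: since $s'_i=s_i$ for $i\in\hN^+$, the two costs differ only in the substation term, and \eqref{observation tree} at $i=0$ gives $\re S_{0,-1}'>\re S_{0,-1}$, i.e.\ $\re s_0'<\re s_0$, so strict monotonicity of $f_0$ yields the strict decrease.

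I expect the main obstacle to be the reduction observation of the first paragraph: making precise that the perturbation decouples from the hanging subtrees, both in the power balance along $\hP_m$ (off-path children absorb nothing) and in the voltage propagation into the subtrees (drops off $\hP_m$ are frozen). This is the only genuinely new ingredient beyond the one-line proof; once it is in place, Steps (S2')--(S3') reduce to the bookkeeping above plus a direct appeal to Lemma \ref{lemma: v}.
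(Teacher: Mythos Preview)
Your proposal is correct and follows essentially the same route as the paper. The reduction observation ($\Delta v_i=\Delta v_{t(i)}$) is just a reorganization of the paper's step, which instead shows $\Delta v_i-\Delta v_j\ge 0$ for every $i\rightarrow j$ and sums along $\hP_i$; both yield $\Delta v_i\ge 0$. The one genuine difference is your treatment of the upper voltage bound: the paper re-derives the telescoping estimate $S_{jk}'\le\hat{S}_{jk}(s')$ and $v_i'\le\hat{v}_i(s')$ from scratch inside the proof, whereas you observe that once \eqref{BFM 1}, \eqref{BFM 2} and \eqref{relax} are verified for $w'(\epsilon)$, Lemma~\ref{lemma: v} (read through the coordinate change of Appendix~\ref{app: transformation}) gives $v_i'\le\hat{v}_i(s')$ directly. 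This is a legitimate and slightly more economical presentation, buying you a shorter argument at the cost of the implicit claim that Lemma~\ref{lemma: v} transfers to the $(s,S,\ell,v)$ variables---which it does, since its proof is precisely the telescoping the paper repeats.
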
	
	
	\begin{proof}
	As having been discussed in the end of Step (S1'), to check that $w'(\epsilon)$ is feasible for the SOCP relaxation, it suffices to show that $w'(\epsilon)$ satisfies \eqref{BFM v} and \eqref{relax}, i.e., $v_i'\geq\underline{v}_i$ for $i\in\hN^+$, $\ell_{ij}'\geq |S_{ij}'|^2/v_i'$ for $i\rightarrow j$, and $v_i'\leq\overline{v}_i$ for $i\in\hN^+$.
	
	First show that $v_i'\geq\underline{v}_i$ for $i\in\hN^+$.  If \eqref{observation tree} holds, then $\Delta S_{i,i-1}>0$ for $i=1,\ldots,m-1$ and $\Delta S_{ij}=0$ for $(i,j)\notin \hP_{m-1}$. It follows from \eqref{BFM 1} and \eqref{BFM 2} that
	\begin{eqnarray*}
	\Delta v_i - \Delta v_j = \re ( z_{ij}^* \Delta S_{ij}) + \re \left( z_{ij}^* \sum_{k:\,j\rightarrow k}\Delta S_{jk} \right) \geq 0
	\end{eqnarray*}
for $i\rightarrow j$. Sum up the inequalities over $\hP_i$ to obtain $\Delta v_i \geq \Delta v_0=0$, which implies $v_i' \geq v_i\geq\underline{v}_i$ for $i\in \hN^+$.

Next show that $\ell_{ij}'\geq |S_{ij}'|^2/v_i'$ for $i\rightarrow j$. For $(i,j)\notin\hP_m$, one has $\ell_{ij}'=\ell_{ij}\geq|S_{ij}|^2/v_i=|S_{ij}'|^2/v_i\geq|S_{ij}'|^2/v_i'$; for $(i,j)=(m,m-1)$, one has $\ell_{ij}'=\ell_{ij}-\epsilon\geq|S_{ij}|^2/v_i=|S_{ij}'|^2/v_i\geq|S_{ij}'|^2/v_i'$; for $(i,j)\in\hP_{m-1}$, one has
	\begin{eqnarray*}
	\ell_{ij}' &\!\!\!=& \!\!\!\frac{\max\{P_{ij}'^2,P_{ij}^2\}+\max\{Q_{ij}'^2,Q_{ij}^2\}}{v_i}\\
	&\!\!\!\geq& \!\!\!\frac{P_{ij}'^2+Q_{ij}'^2}{v_i} ~=~ \frac{|S_i'|^2}{v_i} ~\geq~ \frac{|S_i'|^2}{v_i'}.
	\end{eqnarray*}
Hence, $\ell_{ij}'\geq |S_{ij}'|^2/v_i'$ for $i\rightarrow j$.

Finally show that $v_i'\leq \overline{v}_i$ for $i\in\hN^+$. Since $\ell_{ij}'\geq0$ for $i\rightarrow j$, it follows from \eqref{BFM 2} that
	\begin{eqnarray*}
	S_{jk}' = s_j' + \sum_{i:\, i\rightarrow j} \left( S_{ij}' - z_{ij}\ell_{ij}' \right) \leq s_j' + \sum_{i:\, i\rightarrow j} S_{ij}'
	\end{eqnarray*}
for $j\rightarrow k$. Since the network is a tree, the inequality leads to
	\begin{eqnarray*}
	S_{jk}' &\!\!\!\leq& \!\!\!\sum_{i:\,j\in\hP_i} s_i' ~=~ \hat{S}_{jk}(s')
	\end{eqnarray*}
for $j\rightarrow k$. Therefore, it follows from \eqref{BFM 1} that
 	\begin{eqnarray*}
	v_i'-v_j' &=& 2\re(z_{ij}^*S_{ij}') -|z_{ij}|^2\ell_{ij}' \\
	&\leq& 2\re(z_{ij}^*S_{ij}') \\
	&\leq& 2\re(z_{ij}^*\hat{S}_{ij}(s'))
	\end{eqnarray*}
for $i\rightarrow j$. Now, sum up the inequality over $\hP_i$ to obtain
	\begin{eqnarray*}
	v_i'-v_0' \leq 2\sum_{(k,l)\in\hP_i}\re(z_{kl}^*\hat{S}_{kl}(s'))
	\end{eqnarray*}
for $i\in \hN^+$, which implies
    $$v_i'\leq v_0 + 2\sum_{(k,l)\in\hP_i}\re(z_{kl}^*\hat{S}_{kl}(s')) =\hat{v}_i(s') =\hat{v}_i(s) \leq \overline{v}_i$$
for $i\in \hN^+$.

To this end, it has been proved that if $\hat{v}_i(s)\leq\overline{v}_i$ for $i\in\hN^+$ and \eqref{observation tree} holds, then $w'(\epsilon)$ is feasible for the SOCP relaxation.

If further, $f_0$ is strictly increasing, then one has
	\begin{eqnarray*}
	&& \sum_{i=0}^n f_i(\re(s_i')) - \sum_{i=0}^n f_i(\re(s_i)) \\
    	&=& f_0(\re(s_0')) - f_0(\re(s_0)) \\
    	&=& f_0(-\re(S_{0,-1}')) - f_0(-\re(S_{0,-1}))< 0,
	\end{eqnarray*}
i.e., $w'(\epsilon)$ has a smaller objective value than $w$. This completes the proof of Lemma \ref{lemma: observation tree}.
	\end{proof}

\subsection*{Step (S3')}
We transform \eqref{observation tree}, which depends on both $w$ and $w'$, to \eqref{key step tree}, that only depends on $w$, in this step. The idea is to approximate $w'(\epsilon)$ by its Taylor expansion near $\epsilon=0$.

First compute the Taylor expansion of $w'(\epsilon)$. It follows from
	\begin{eqnarray*}
	\Delta S_{m-1,m-2} &=& \Delta S_{m,m-1}-z_{m,m-1}\Delta \ell_{m,m-1} \\
	&=& z_{m,m-1}\epsilon
	\end{eqnarray*}
that
 	\begin{equation}\label{initial tree}
    \begin{pmatrix} \Delta P_{m-1,m-2}\\ \Delta Q_{m-1,m-2} \end{pmatrix}
	= \begin{pmatrix} r_{m,m-1}\\ x_{m,m-1} \end{pmatrix} \epsilon > 0.
    \end{equation}
For any $t\in\{1,\ldots,m-1\}$, if
	\begin{equation}\label{recursive condition tree}
    \begin{pmatrix} \Delta P_{t,t-1}\\ \Delta Q_{t,t-1} \end{pmatrix} = B\epsilon + O(\epsilon^2)
    \end{equation}
for some $B>0$, then $(\Delta P_{t,t-1},\Delta Q_{t,t-1})^T>0$ for sufficiently small $\epsilon$. It follows that
	\begin{eqnarray*}
	\Delta \ell_{t,t-1} &\!\!\!=& \!\!\!\frac{\max\{P_{t,t-1}'^2,P_{t,t-1}^2\}+\max\{Q_{t,t-1}'^2,Q_{t,t-1}^2\}}{v_t} \\
	&& \!\!\! \qquad -\frac{P_{t,t-1}^2+Q_{t,t-1}^2}{v_t}\\
	&\!\!\!=& \!\!\!\frac{\max\{P_{t,t-1}'^2-P_{t,t-1}^2,0\}+\max\{Q_{t,t-1}'^2-Q_{t,t-1}^2,0\}}{v_t}\\
	&\!\!\!=& \!\!\!\frac{\max\{2P_{t,t-1}\Delta P_{t,t-1}+O(\epsilon^2),0\}}{v_t}  \\
	&& \!\!\!\qquad +\frac{\max\{2Q_{t,t-1}\Delta Q_{t,t-1}+O(\epsilon^2),0\}}{v_t}\\
	&\!\!\!=& \!\!\!\frac{2P_{t,t-1}^+\Delta P_{t,t-1}}{v_t} +\frac{2Q_{t,t-1}^+\Delta Q_{t,t-1}}{v_t}+O(\epsilon^2)
	\end{eqnarray*}
and
	\begin{eqnarray}
	&& \!\!\!\!\!\!\!\!\!\!\!\!\!\!\! \begin{pmatrix} \Delta P_{t-1,t-2}\\ \Delta Q_{t-1,t-2} \end{pmatrix} \,=\,
	\begin{pmatrix} \Delta P_{t,t-1}\\ \Delta Q_{t,t-1} \end{pmatrix}
	-\begin{pmatrix} r_{t,t-1}\\ x_{t,t-1} \end{pmatrix}\Delta \ell_{t,t-1} \nonumber\\
	&\!\!\!\!\!=& \!\!\!\!\! \begin{pmatrix} 1-\frac{2r_{t,t-1}P_{t,t-1}^+}{v_t} & -\frac{2r_{t,t-1}Q_{t,t-1}^+}{v_t}\\ -\frac{2x_{t,t-1}P_{t,t-1}^+}{v_t} & 1-\frac{2x_{t,t-1}Q_{t,t-1}^+}{v_t} \end{pmatrix}
	\begin{pmatrix} \Delta P_{t,t-1}\\ \Delta Q_{t,t-1} \end{pmatrix} \nonumber\\
	&& \!\!\!\!\! \qquad +O(\epsilon^2).\label{recursive tree}
	\end{eqnarray}
With this recursive relation and the initial value in \eqref{initial tree}, the Taylor expansion of $(\Delta P_{t,t-1},\Delta Q_{t,t-1})^T$ near $\epsilon=0$ can be computed for $0\leq t \leq m-1$, as given in Lemma \ref{lemma: Taylor}.
	
To state the lemma, given any feasible point $w=(s,S=P+\ii Q,\ell,v,s_0)$ of the SOCP relaxation, define
	\begin{eqnarray*}
	c_{ij}(w) &:=& 1-2r_{ij}P_{ij}^+/v_i, \\
	d_{ij}(w) &:=& 2r_{ij}Q_{ij}^+/v_i, \\
	e_{ij}(w) &:=& 2x_{ij}P_{ij}^+/v_i, \\
	f_{ij}(w) &:=& 1-2x_{ij}Q_{ij}^+/v_i,
	\end{eqnarray*}
and
	\begin{eqnarray*}
	A_{ij}(w)=\begin{pmatrix} c_{ij}(w) & -d_{ij}(w)\\ -e_{ij}(w) & f_{ij}(w) \end{pmatrix}
	\end{eqnarray*}
for $i\rightarrow j$. Also define $u_{ij}:=(r_{ij},x_{ij})^T$ for $i\rightarrow j$.

    \begin{lemma}\label{lemma: Taylor tree}
    Given a feasible point $w=(s,S,\ell,v,s_0)$ of the SOCP relaxation that violates \eqref{BFM 4}, let $m$ be defined as \eqref{m tree}, $\epsilon\in(0,\ell_{m,m-1}-|S_{m,m-1}|^2/v_m]$, and $w'(\epsilon)=(s',S',\ell',v',s_0')$ be the output of Algorithm \ref{algorithm tree}. If
	\begin{equation*}
	A_{t,t-1}(w)\cdots A_{m-1,m-2}(w)u_{m,m-1} > 0
	\end{equation*}
for $t=1,\ldots,m$, then
	\begin{equation}\label{Taylor tree}
	\begin{pmatrix} \Delta P_{t-1,t-2}\\ \Delta Q_{t-1,t-2} \end{pmatrix} = A_{t,t-1}\cdots A_{m-1,m-2}u_{m,m-1}\epsilon+O(\epsilon^2)
	\end{equation}
for $t=1,\ldots,m$.
    \end{lemma}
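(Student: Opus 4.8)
The plan is to prove \eqref{Taylor tree} by downward induction on $t$, from $t=m$ to $t=1$, in exact parallel with the proof of its one-line counterpart Lemma \ref{lemma: Taylor}. The two analytic ingredients are already established immediately before the statement: the initial value \eqref{initial tree} and the linearized recursion \eqref{recursive tree}. Both hold because Algorithm \ref{algorithm tree} perturbs $\ell$ and $S$ only along the path $\mathcal{P}_m$, leaving every off-path line flow and loss unchanged; consequently $\Delta S_{hi}=0$ for all lines $h\rightarrow i$ that feed a bus of $\mathcal{P}_m$ from outside the path, and the power balance \eqref{BFM 2} at each bus of $\mathcal{P}_m$ collapses to the same scalar-in-the-path recursion as in a one-line network. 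This decoupling is precisely what makes the perturbation analysis one-dimensional along $\mathcal{P}_m$, so no genuinely new computation is needed beyond that proof.

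For the base case $t=m$, the initial value \eqref{initial tree} gives $(\Delta P_{m-1,m-2},\Delta Q_{m-1,m-2})^T=u_{m,m-1}\epsilon$, which is exactly \eqref{Taylor tree} under the convention that the empty matrix product is the identity. For the inductive step I would assume \eqref{Taylor tree} holds at some $t=k$ with $2\leq k\leq m$, i.e. $(\Delta P_{k-1,k-2},\Delta Q_{k-1,k-2})^T=A_{k,k-1}\cdots A_{m-1,m-2}\,u_{m,m-1}\,\epsilon+O(\epsilon^2)$. By the hypothesis of the lemma, $A_{k,k-1}\cdots A_{m-1,m-2}\,u_{m,m-1}>0$, so the leading coefficient is componentwise positive and \eqref{recursive condition tree} is satisfied at $t=k-1$ with a positive $B$; hence $(\Delta P_{k-1,k-2},\Delta Q_{k-1,k-2})^T>0$ for sufficiently small $\epsilon$, which is exactly the condition that legitimizes the linearization of $\Delta\ell_{k-1,k-2}$ and therefore the recursion \eqref{recursive tree} at $t=k-1$. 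Applying \eqref{recursive tree} and substituting the induction hypothesis then gives $(\Delta P_{k-2,k-3},\Delta Q_{k-2,k-3})^T=A_{k-1,k-2}\cdots A_{m-1,m-2}\,u_{m,m-1}\,\epsilon+O(\epsilon^2)$, which is \eqref{Taylor tree} at $t=k-1$, closing the induction.

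I anticipate no real obstacle, as the argument is routine once the recursion is in hand. The single point deserving explicit care is the role of the positivity hypothesis $A_{t,t-1}\cdots A_{m-1,m-2}\,u_{m,m-1}>0$: it is needed not merely to phrase the conclusion but, at every step, to guarantee that the relevant $(\Delta P,\Delta Q)^T$ is componentwise positive for small $\epsilon$. That positivity is what makes the $\max\{\cdot,\cdot\}$ terms in the construction of $\ell'$ linearize to the coefficients $P^+_{t,t-1}$ and $Q^+_{t,t-1}$, producing exactly the matrix $A_{t,t-1}$ rather than some other map. I would also note, as a triviality, that the $O(\epsilon^2)$ remainder composes correctly under left-multiplication by the (bounded) matrices $A_{t,t-1}$, and remark that the tree topology enters the proof only through the off-path decoupling already used to derive \eqref{initial tree} and \eqref{recursive tree}, so the induction itself is textually identical to the one-line case.
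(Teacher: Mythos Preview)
Your proposal is correct and follows essentially the same approach as the paper's own proof: downward induction on $t$ from $m$ to $1$, with \eqref{initial tree} as the base case and, at each step, using the positivity hypothesis to trigger \eqref{recursive condition tree} and hence the linearized recursion \eqref{recursive tree}. Your additional remarks about off-path decoupling and the composition of $O(\epsilon^2)$ errors are sound clarifications but go slightly beyond what the paper writes out.
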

    \begin{proof}
    We prove that \eqref{Taylor tree} holds for $t=m,\ldots,1$ by mathematical induction on $t$.
    \begin{itemize}
    \item[i)] When $t=m$, equation \eqref{Taylor tree} holds according to \eqref{initial tree}.
    \item[ii)] Assume that \eqref{Taylor tree} holds for $t=k$ ($2\leq k\leq m$), i.e.,
    $$\!\!\!\!\! \begin{pmatrix} \Delta P_{k-1,k-2}\\ \Delta Q_{k-1,k-2} \end{pmatrix} = A_{k,k-1}\cdots A_{m-1,m-2}u_{m,m-1}\epsilon+O(\epsilon^2).$$
    Since $A_{k,k-1}\cdots A_{m-1,m-2}u_{m,m-1}>0$, equation \eqref{recursive condition tree} holds for $t=k-1$. It follows that when $t=k-1$, one has
        \begin{eqnarray*}
	    && \!\!\!\!\!\!\!\!\!\! \begin{pmatrix} \Delta P_{t-1,t-2}\\ \Delta Q_{t-1,t-2} \end{pmatrix}
	    \,=\,  A_{t,t-1} \begin{pmatrix} \Delta P_{t,t-1}\\ \Delta Q_{t,t-1} \end{pmatrix} +O(\epsilon^2) \\
        &\!\!\!\!\!\!=& \!\!\! A_{t,t-1}  A_{k,k-1}\cdots A_{m-1,m-2}u_{m,m-1}\epsilon +O(\epsilon^2),
	    \end{eqnarray*}
i.e., equation \eqref{Taylor tree} holds for $t=k-1$.
    \end{itemize}
According to (i) and (ii), equation \eqref{Taylor tree} holds for $t=m,\ldots,1$. This completes the proof of Lemma \ref{lemma: Taylor tree}.
    \end{proof}

The condition in \eqref{observation tree} can be simplified to a new condition that only depends on $w$, as given in the following corollary, using the Taylor expansion of $(\Delta P,\Delta Q)$ given in Lemma \ref{lemma: Taylor tree}. To state the corollary, for $i\in\hN^+$, let $n_i+1$ denote the number of buses on path $\hP_i$, and denote
    $$\hP_i=\{(i_{n_i}, i_{n_i-1}), \ldots, (i_1,i_0)\}.$$
Then $i_{n_i}=i$ and $i_0=0$. Define
    $$A_{i,t} = \begin{pmatrix} 1-\frac{2r_{i_ti_{t-1}}P_{i_ti_{t-1}}^+}{v_{i_t}} & -\frac{2r_{i_ti_{t-1}}Q_{i_ti_{t-1}}^+}{v_{i_t}}\\ -\frac{2x_{i_ti_{t-1}}P_{i_ti_{t-1}}^+}{v_{i_t}} & 1-\frac{2x_{i_ti_{t-1}}Q_{i_ti_{t-1}}^+}{v_{i_t}} \end{pmatrix}$$
for $i\in\hN^+$ and $1\leq t \leq n_i$.

	\begin{corollary}\label{lemma: key lemma tree}
Let $w$ be a feasible point of the SOCP relaxation that violates \eqref{BFM 4}. If
	\begin{equation}\label{key step tree}
	A_{i_t,i_{t-1}}(w)\cdots A_{i_{n_i-1},i_{n_i-2}}(w) u_{i_{n_i},i_{n_i-1}}>0
	\end{equation}
for $i\in\hN^+$ and $1\leq t\leq n_i$, then the output $w'(\epsilon)$ of Algorithm \ref{algorithm tree} satisfies \eqref{observation tree} for sufficiently small $\epsilon$.
	\end{corollary}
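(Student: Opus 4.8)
The plan is to mirror the one-line argument for Corollary \ref{lemma: key lemma}: I would specialize the hypothesis \eqref{key step tree} to the single bus $i = m$ and then feed it into the Taylor expansion of Lemma \ref{lemma: Taylor tree}.

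First I would fix the bookkeeping. Recall that $m \rightarrow m-1$ is the line at which $w$ violates \eqref{BFM 4} (chosen in \eqref{m tree}), and that by the indexing convention \eqref{path} the path $\hP_m$ is $\{(m,m-1),\ldots,(1,0)\}$. Matching this against the generic path notation $\hP_i = \{(i_{n_i},i_{n_i-1}),\ldots,(i_1,i_0)\}$ of the statement, for $i = m$ one has $n_m = m$ and $i_t = t$ for $0 \leq t \leq m$. With this identification, the instance $i = m$ of \eqref{key step tree} reads
$$A_{t,t-1}(w)\cdots A_{m-1,m-2}(w)\,u_{m,m-1} > 0, \qquad 1 \leq t \leq m.$$

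Next I would observe that this is exactly the hypothesis of Lemma \ref{lemma: Taylor tree}. Invoking that lemma yields
$$\begin{pmatrix} \Delta P_{t-1,t-2}\\ \Delta Q_{t-1,t-2} \end{pmatrix} = A_{t,t-1}(w)\cdots A_{m-1,m-2}(w)\,u_{m,m-1}\,\epsilon + O(\epsilon^2)$$
for $t = 1,\ldots,m$. Since every leading coefficient is strictly positive componentwise by the displayed inequality, for all sufficiently small $\epsilon > 0$ the linear term dominates and $(\Delta P_{t-1,t-2}, \Delta Q_{t-1,t-2})^T > 0$, i.e. $\Delta S_{t-1,t-2} > 0$, for each $t = 1,\ldots,m$. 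Setting $k = t-1$ gives $\Delta S_{k,k-1} > 0$ for $k = 0,\ldots,m-1$, which is exactly \eqref{observation tree}.

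I expect essentially no mathematical obstacle here, since all the analytic work is already packaged into Lemma \ref{lemma: Taylor tree}. The only thing requiring care is the double-subscript bookkeeping: correctly aligning the concrete path indices of $\hP_m$ from \eqref{path} with the generic indices $i_t$ in \eqref{key step tree}, and recognizing that only the single instance $i = m$ of the hypothesis is used here --- the remaining instances $i \neq m$ are reserved for Step (S4'), where C1 is shown to imply \eqref{key step tree} along every path.
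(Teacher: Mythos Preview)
Your proposal is correct and matches the paper's own proof essentially line for line: specialize \eqref{key step tree} to $i=m$, use the path-index identification $n_m=m$, $i_t=t$ from \eqref{path}, invoke Lemma \ref{lemma: Taylor tree}, and conclude $\Delta S_{k,k-1}>0$ for $k=0,\ldots,m-1$ from strict positivity of the leading coefficients. Your closing remark that only the instance $i=m$ is consumed here, with the rest left for Step (S4'), is also accurate.
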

    \begin{proof}
    Let $m$ be defined as \eqref{m tree}, then it follows from \eqref{path} that $n_m=m$ and $m_t=t$ for $0\leq t\leq m$. Substitute $i=m$ in \eqref{key step tree} to obtain
        $$A_{t,t-1}(w)\cdots A_{m-1,m-2}(w)u_{m,m-1}>0, \quad 1\leq t \leq  m.$$
    Then, it follows from Lemma \ref{lemma: Taylor tree} that
        $$\begin{pmatrix} \Delta P_{t-1,t-2}\\ \Delta Q_{t-1,t-2} \end{pmatrix} = A_{t,t-1}\cdots A_{m-1,m-2}u_{m,m-1}\epsilon+O(\epsilon^2)$$
    for $t=1,\ldots,m$. Hence, $\Delta S_{t,t-1}>0$ for $t=0,\ldots,m-1$, for sufficiently small $\epsilon$, i.e., $w'(\epsilon)$ satisfies \eqref{observation tree} for sufficiently small $\epsilon$.
    \end{proof}
    

	\begin{corollary}\label{corollary: key tree}
    Assume that $f_0$ is strictly increasing; every optimal solution $w=(s,S,\ell,v,s_0)$ of the SOCP relaxation satisfies $\hat{v}_i(s)\leq\overline{v}_i$ for $i\in\hN^+$ and \eqref{key step tree}. Then the SOCP relaxation is exact.
	\end{corollary}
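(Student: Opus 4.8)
The plan is to argue by contradiction, following verbatim the structure of the one-line proof of Corollary \ref{corollary: key}; essentially all of the technical work has already been carried out in the tree versions of the supporting results, so this corollary amounts to threading them together. Recall that, after the transformation in Appendix \ref{app: transformation}, the SOCP relaxation is exact if and only if every one of its solutions satisfies \eqref{BFM 4}. Hence I would suppose, for contradiction, that the SOCP relaxation is not exact, so that there exists an optimal solution $w=(s,S,\ell,v,s_0)$ that violates \eqref{BFM 4} on at least one line.

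First I would confirm that an index $m$ as required in \eqref{m tree} actually exists. Among all lines on which \eqref{BFM 4} is slack, pick one, say $(k,l)$, and walk toward the root along $\hP_l$; whenever another slack line is encountered, restart the walk from it. Since the tree is finite, this terminates at a slack line $m\rightarrow\beta$ all of whose downstream lines in $\hP_\beta$ are tight, which is exactly \eqref{m tree}. This guarantees that Algorithm \ref{algorithm tree} is well-defined on $w$.

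Next I would invoke the two prior results in sequence. Because $w$ satisfies \eqref{key step tree} by hypothesis, Corollary \ref{lemma: key lemma tree} supplies a sufficiently small $\epsilon>0$ for which the constructed point $w'(\epsilon)$ satisfies \eqref{observation tree}. Then, since $w$ also satisfies $\hat{v}_i(s)\leq\overline{v}_i$ for $i\in\hN^+$, the first part of Lemma \ref{lemma: observation tree} shows that $w'(\epsilon)$ is feasible for the SOCP relaxation, and, since $f_0$ is strictly increasing, the second part shows that $w'(\epsilon)$ has a strictly smaller objective value than $w$. This contradicts the optimality of $w$, so no optimal solution can violate \eqref{BFM 4}; that is, the SOCP relaxation is exact.

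The main obstacle lies not in this corollary itself but in making sure the earlier pieces compose with a single common $\epsilon$: Corollary \ref{lemma: key lemma tree} and Lemma \ref{lemma: observation tree} each hold only ``for sufficiently small $\epsilon$'', so I would take $\epsilon$ smaller than both thresholds and no larger than $\ell_{m,m-1}-|S_{m,m-1}|^2/v_m$, so that Algorithm \ref{algorithm tree} accepts it. Because only finitely many constraints and finitely many lines on $\hP_m$ are involved, the $O(\epsilon^2)$ remainders from the Taylor expansion are dominated by the strictly positive linear terms for all of them simultaneously, so such an $\epsilon$ exists. Beyond this bookkeeping, the argument is a direct repackaging of the one-line proof.
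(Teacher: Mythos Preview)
Your proposal is correct and follows essentially the same contradiction argument as the paper: assume a non-exact optimum $w$, invoke Corollary~\ref{lemma: key lemma tree} to obtain $\epsilon$ with $w'(\epsilon)$ satisfying \eqref{observation tree}, then apply Lemma~\ref{lemma: observation tree} to contradict optimality. Your added justification for the existence of an $m$ satisfying \eqref{m tree} and the bookkeeping on~$\epsilon$ are fine embellishments (the paper simply asserts both); note, however, that Lemma~\ref{lemma: observation tree} holds for every $\epsilon\in(0,\ell_{m,m-1}-|S_{m,m-1}|^2/v_m]$, so the only ``sufficiently small'' threshold comes from Corollary~\ref{lemma: key lemma tree}.
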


    \begin{proof}
    Assume the SOCP relaxation is not exact, then one can derive a contradiction as follows.

    Since the SOCP relaxation is not exact, there exists an optimal solution $w=(s,S,\ell,v,s_0)$ of the SOCP relaxation that violates \eqref{BFM 4}. Since $w$ satisfies \eqref{key step tree}, one can pick a sufficiently small $\epsilon>0$ such that $w'(\epsilon)$ satisfies \eqref{observation tree} according to Corollary \ref{lemma: key lemma tree}. Further, the point $w'(\epsilon)$ is feasible for the SOCP relaxation and has a smaller objective value than $w$, according to Lemma \ref{lemma: observation tree}. This contradicts with $w$ being optimal for the SOCP relaxation.

    Hence, the SOCP relaxation is exact.
    \end{proof}

\subsection*{Step (S4')}
To complete the proof of Lemma \ref{lemma: exact}, it suffices to prove the following lemma.
	\begin{lemma}\label{lemma: main condition tree}
	If there exists $\overline{p}_i$ and $\overline{q}_i$ such that \eqref{relax constraints} holds for $i\in\hN^+$, and C1 holds, then \eqref{key step tree} holds for all feasible points of the SOCP relaxation.
	\end{lemma}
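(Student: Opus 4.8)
The plan is to reduce the tree statement to the one-line argument by observing that every root-path $\hP_i$ is itself a one-line network, so that Lemma~\ref{lemma: matrix multiplication} applies verbatim along it. Fix an arbitrary feasible point $w=(s,S,\ell,v,s_0)$ of the SOCP relaxation and an arbitrary bus $i\in\hN^+$, with path $\hP_i=\{(i_{n_i},i_{n_i-1}),\ldots,(i_1,i_0)\}$, $i_{n_i}=i$, $i_0=0$. First I would bound the sending-end flows exactly as in Step~(S2'): since $\ell_{ij}\geq|S_{ij}|^2/v_i\geq0$ for every $i\rightarrow j$, summing the flow equation \eqref{BFM 2} over each subtree gives $S_{jk}\leq\sum_{h:\,j\in\hP_h}s_h=\hat{S}_{jk}(s)$, and combined with the injection bound \eqref{relax constraints} this yields $P_{i_ki_{k-1}}^+\leq\hat{P}_{i_ki_{k-1}}^+(\overline{p})$ and $Q_{i_ki_{k-1}}^+\leq\hat{Q}_{i_ki_{k-1}}^+(\overline{q})$ for every edge on $\hP_i$.

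Next I would establish the entry bounds and positivity, the analogue of part (i) of the proof of Lemma~\ref{lemma: main condition}. Condition C1 gives $a_{i_s}^1r>a_{i_s}^2x\geq0$ and $a_{i_s}^4x>a_{i_s}^3r\geq0$ at the edge below each node $i_s$, forcing $a_{i_s}^1,a_{i_s}^4>0$; writing each single-edge factor as the ratio of consecutive partial products $a_{i_s}^1/a_{i_{s-1}}^1$ then shows every factor $1-2r_{i_si_{s-1}}\hat{P}_{i_si_{s-1}}^+(\overline{p})/\underline{V}_{i_s}^2$ is positive, and symmetrically for the $x$-factors. Using the flow bounds above together with $v_{i_s}\geq\underline{V}_{i_s}^2$, I obtain $c_{i_si_{s-1}}(w)\geq 1-2r_{i_si_{s-1}}\hat{P}_{i_si_{s-1}}^+(\overline{p})/\underline{V}_{i_s}^2>0$ and $c\leq1$, and likewise $0<f\leq1$, $d\geq0$, $e\geq0$, so the hypotheses on $c,d,e,f$ in Lemma~\ref{lemma: matrix multiplication} hold along $\hP_i$.

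The crux is the collapsed inequality, the analogue of part (ii). Because $\hP_{\mathrm{pa}(i)}$ consists of exactly the edges $(i_k,i_{k-1})$, $k=1,\ldots,n_i-1$, the monotone bounds above give $\prod_{k=1}^{n_i-1}c_{i_ki_{k-1}}(w)\geq a_{\mathrm{pa}(i)}^1$, $\sum_{k=1}^{n_i-1}d_{i_ki_{k-1}}(w)\leq a_{\mathrm{pa}(i)}^2$, $\sum_{k=1}^{n_i-1}e_{i_ki_{k-1}}(w)\leq a_{\mathrm{pa}(i)}^3$, and $\prod_{k=1}^{n_i-1}f_{i_ki_{k-1}}(w)\geq a_{\mathrm{pa}(i)}^4$. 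With $u=u_{i,\mathrm{pa}(i)}=(r_{i,\mathrm{pa}(i)},x_{i,\mathrm{pa}(i)})^T>0$, Condition C1 at the edge $i\rightarrow\mathrm{pa}(i)$ makes both entries of the collapsed-matrix product with $u$ strictly positive. Applying Lemma~\ref{lemma: matrix multiplication} along $\hP_i$, with its index $k$ identified with the path edge $(i_k,i_{k-1})$, then yields \eqref{key step tree} for all $1\leq t\leq n_i-1$; the remaining case $t=n_i$ is just $u>0$. Since $w$ and $i$ were arbitrary, this proves the lemma.

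I expect the main obstacle to be purely the bookkeeping: keeping the path indexing $\hP_i=\{(i_{n_i},i_{n_i-1}),\ldots,(i_1,i_0)\}$ aligned with the $1,\ldots,i$ indexing of Lemma~\ref{lemma: matrix multiplication}, and verifying that the per-edge products and sums over $\hP_{\mathrm{pa}(i)}$ collapse to $a^1_{\mathrm{pa}(i)},\ldots,a^4_{\mathrm{pa}(i)}$ with the inequalities pointing the right way. Once this reduction to each root-path is made, the mathematical content is identical to the one-line case.
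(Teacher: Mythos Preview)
Your proposal is correct and follows essentially the same approach as the paper's proof: bound the sending-end flows via $\ell\geq0$ and \eqref{BFM 2} to get $P_{kl}^+\leq\hat P_{kl}^+(\overline p)$, $Q_{kl}^+\leq\hat Q_{kl}^+(\overline q)$; use C1 to force $a_j^1,a_j^4>0$ along the path and hence each single-edge factor positive, yielding the entrywise bounds $0<c\leq1$, $d\geq0$, $e\geq0$, $0<f\leq1$; then combine these bounds with C1 at the edge $i\to\mathrm{pa}(i)$ to verify the collapsed $2\times2$ inequality over $\hP_{\mathrm{pa}(i)}$ and invoke Lemma~\ref{lemma: matrix multiplication}. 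The only difference is presentational---you phrase the argument as a reduction to the one-line case along each root-path, whereas the paper restates the steps directly in tree notation---but the mathematical content and the sequence of lemmas are the same.
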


    \begin{proof}
    Assume that there exists $\overline{p}_i$ and $\overline{q}_i$ such that \eqref{relax constraints} holds for $i\in\hN^+$; and that C1 holds. According to Lemma \ref{lemma: matrix multiplication}, it suffices to prove that for any feasible point $w=(s=p+\ii q,S=P+\ii Q,\ell,v)$ of the SOCP relaxation, the following inequalities hold. To state the inequalities, let
    	$$\hL:=\{i\in\hN~|~\nexists h \text{ such that }h\rightarrow i\}$$
denote the collection of leaf buses.
    \begin{itemize}
    \item[i)] $0<c_{ij}(w)\leq 1, d_{ij}(w)\geq 0, e_{ij}(w)\geq 0, 0<f_{ij}(w)\leq1$ for $i\notin\hL$, $i\rightarrow j$;
    \item[ii)]
    \[\begin{pmatrix}
	\displaystyle \prod_{(k,l)\in\hP_j} c_{kl}(w) & \displaystyle  -\sum_{(k,l)\in\hP_j}d_{kl}(w)\\
	\displaystyle  -\sum_{(k,l)\in\hP_j} e_{kl}(w) & \displaystyle  \prod_{(k,l)\in\hP_j}f_{kl}(w)
	\end{pmatrix}
	\begin{pmatrix}
	r_{ij} \\ x_{ij}
	\end{pmatrix}
	> 0\]
for $i\rightarrow j$.
    \end{itemize}

	First check (i). It follows from $a_j^1r_{ij}>a_j^2x_{ij}\geq0$ (see C1) for $i\rightarrow j$ that $a_j^1>0$ for $j\notin\hL$. Now fix any $i\notin\hL$. Since
    $$0<a_j^1=\prod_{(k,l)\in\hP_j}\left( 1-\frac{2r_{kl}\hat{P}_{kl}^+(\overline{p})}{\underline{v}_k} \right)$$
    for $j\in\hP_i$, one has
    $$1-2r_{kl}\hat{P}_{kl}^+(\bar{p})/\underline{v}_k>0$$
for $(k,l)\in\hP_i$. Since $\ell_{jk}\geq0$ for $j\rightarrow k$, equation \eqref{BFM 2} implies
	\begin{eqnarray*}
	S_{kl} = s_k + \sum_{j:\, j\rightarrow k} \left( S_{jk} - z_{jk}\ell_{jk} \right) \leq s_k + \sum_{j:\, j\rightarrow k} S_{jk}
	\end{eqnarray*}
for $k\rightarrow l$. Since the network is a tree, the inequality leads to
	\begin{eqnarray*}
	S_{kl} \leq \!\!\!\sum_{j:\,k\in\hP_j} s_j = \hat{S}_{kl}(s)
	\end{eqnarray*}
for $k\rightarrow l$. It follows that $P_{kl}\leq \hat{P}_{kl}({p})\leq \hat{P}_{kl}(\overline{p})$ and therefore $P_{kl}^+\leq \hat{P}_{kl}^+(p)\leq \hat{P}_{kl}^+(\overline{p})$ for $k\rightarrow l$. It follows that
	\begin{eqnarray*}
	&& c_{kl}(w) = 1-\frac{2r_{kl}P_{kl}^+}{v_k}\in \left[1-\frac{2r_{kl}\hat{P}_{kl}^+(\bar{p})}{\underline{v}_k},1\right]\subseteq \left(0,1\right],\\
	&& e_{kl}(w) = \frac{2x_{kl}P_{kl}^+}{v_k}\in \left[0,\frac{2x_{kl}\hat{P}_{kl}^+(\bar{p})}{\underline{v}_k}\right] \subseteq \left[0,\infty\right)
	\end{eqnarray*}
for $(k,l)\in\hP_i$. Similarly,
	\begin{eqnarray*}
	&& d_{kl}(w) \in \left[ 0, \frac{2r_{kl}\hat{Q}_{kl}^+(\bar{q})}{\underline{v}_k}\right]\subseteq \left[0,\infty\right),\\
	&& f_{kl}(w) \in \left[1-\frac{2x_{kl}\hat{Q}_{kl}^+(\bar{q})}{\underline{v}_k}, 1\right]\subseteq \left(0,1\right]
	\end{eqnarray*}
for $(k,l)\in\hP_i$. Hence, the inequalities in (i) hold.

Next check (ii). For any $i\rightarrow j$, one has
    \begin{eqnarray*}
	\prod_{(k,l)\in\hP_j} c_{kl}(w)r_{ij}
    &=& \prod_{(k,l)\in\hP_j} \left(1-\frac{2r_{kl}P_{kl}^+}{v_k}\right) r_{ij} \\
    &\geq& \prod_{(k,l)\in\hP_j} \left(1-\frac{2r_{kl}\hat{P}_{kl}^+(\bar{p})}{\underline{v}_k}\right) r_{ij} \\
    &=& a_j^1r_{ij} \\
    &>& a_j^2x_{ij} \qquad \text{(Condition C1)}\\
    &=& \sum_{(k,l)\in\hP_j}\frac{2r_{kl}\hat{Q}_{kl}^+(\overline{q})}{\underline{v}_k}x_{ij} \\
    &\geq& \sum_{(k,l)\in\hP_j}\frac{2r_{kl}Q_{kl}^+}{\underline{v}_k}x_{ij} \\
    &=& \sum_{(k,l)\in\hP_j}d_{kl}(w)x_{ij}.
    \end{eqnarray*}
Similarly,
    \begin{eqnarray*}
	\sum_{(k,l)\in\hP_j} e_{kl}(w)r_{ij} < \prod_{(k,l)\in\hP_j} f_{kl}(w)x_{ij}
	\end{eqnarray*}
for $i\rightarrow j$. Hence, the inequalities in (ii) hold.

To this end, we have proved that the inequalities in (i) and (ii) hold. Apply Lemma \ref{lemma: matrix multiplication} to obtain
    \begin{eqnarray*}
    &&\!\!\!\!\!\!\!\!\!\! \begin{pmatrix}
	c_{i_t,i_{t-1}} & -d_{i_t,i_{t-1}}\\
	-e_{i_t,i_{t-1}} & f_{i_t,i_{t-1}}
	\end{pmatrix}
    \cdots \\
    && \begin{pmatrix}
	c_{i_{n_i-1},i_{n_i-2}} & -d_{i_{n_i-1},i_{n_i-2}}\\
	-e_{i_{n_i-1},i_{n_i-2}} & f_{i_{n_i-1},i_{n_i-2}}
	\end{pmatrix}
	\begin{pmatrix}
	r_{i_{n_i},i_{n_i-1}} \\ x_{i_{n_i},i_{n_i-1}}
	\end{pmatrix}
	> 0
	\end{eqnarray*}
for $i\in\hN^+$ and $1\leq t < n_i$, i.e.,
    $$A_{i_t,i_{t-1}}(w)\cdots A_{i_{n_i-1},i_{n_i-2}}(w) u_{i_{n_i},i_{n_i-1}}>0$$
for $i\in\hN^+$ and $1\leq t < n_i$. When $t=n_i$, one has
    $$A_{i_t,i_{t-1}}\cdots A_{i_{n_i-1},i_{n_i-2}} u_{i_{n_i},i_{n_i-1}}= u_{i_{n_i},i_{n_i-1}}>0.$$
Hence, $A_{i_t,i_{t-1}}\cdots A_{i_{n_i-1},i_{n_i-2}} u_{i_{n_i},i_{n_i-1}}>0$ for $i\in\hN^+$, $1\leq t\leq n_i$. This completes the proof of Lemma \ref{lemma: main condition tree}.
\end{proof}

\noindent{\it Proof of Lemma \ref{lemma: exact}. }
If the conditions in Lemma \ref{lemma: exact} hold, then the conditions in Lemma \ref{lemma: main condition tree} hold. It follows from Lemma \ref{lemma: main condition tree} that \eqref{key step tree} holds for all optimal solution of the SOCP relaxation.

Then, the conditions in Corollary \ref{corollary: key tree} hold, and it follows that the SOCP relaxation is exact. This completes the proof of Lemma \ref{lemma: exact}. $\hfill\Box$

\section{Supplements for the main sections}
\subsection{Proof of Theorem \ref{thm: equivalence}}\label{app: equivalence}
It is straightforward that for any $x=(s,V,s_0)\in\mathcal{F}_\text{OPF}$, the point $\phi(x)=(s,W,s_0)$ where $W$ is defined according to \eqref{phi} is feasible for OPF' and has the same objective value as $x$. It remains to prove that the map $\phi$ is bijective.

We first show that the map $\phi$ is injective. For $x=(s,V,s_0)$ and $x'=(s',V',s_0')$, if $\phi(x)=\phi(x')$, then $V_iV_j^*=V_i'V_j'^*$ for $i\sim j$. Hence, $V_i=V_i'$ implies $V_j=V_j'$ if $i\sim j$. Then, since $V_0=V_0'$ and the network is connected, $V=V'$, which further implies $x=x'$.

At last, we prove that $\phi$ is surjective. It suffices to show that for any $(s,W,s_0)\in\hF_\text{OPF'}$, there exists $V$ such that $V_0$ equals the fixed constant and $W_{ij}=V_iV_j^*$ for $i=j$ and $i\sim j$. Such $V$ is given below:
	$$V_i=\sqrt{W_{ii}}\exp\left(\ii \left(\angle V_0+\sum_{(j,k)\in\hP_i}\angle W_{jk}\right)\right), \qquad i\in \hN.$$
It is not difficult to verify that such $V$ satisfies $\phi(s,V,s_0)=(s,W,s_0)$, which completes the proof of Theorem \ref{thm: equivalence}.

\subsection{The SOCP relaxation is not always exact.}\label{app: non exact}
The SOCP relaxation is not always exact, and a two-bus example where the SOCP relaxation is not exact is illustrated in Fig. \ref{fig: example}. Bus 0 is the substation and has fixed voltage magnitude $|V_0|=1$. The branch bus has distributed generator generating 1 per unit real power, of which $\re(s)\in[0,1]$ is injected into the grid and the rest $1-\re(s)$ is curtailed. The reactive power injection is 0. Line admittance is $y=2-4\ii$. The objective is to minimize the sum of power loss $2(W_{00}-W_{01}-W_{10}+W_{11})$ and curtailment $1-\re(s)$.
	\begin{figure}[!htbp]
     	\centering
     	\includegraphics[scale=0.5]{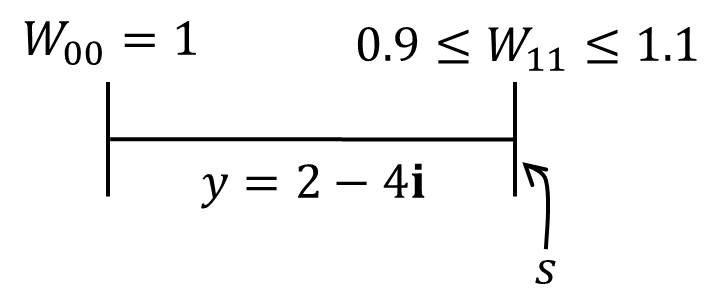}
      	\caption{A two-bus network example where the SOCP relaxation is not exact.}
      	\label{fig: example}
	\end{figure}

In this example, the SOCP relaxation is
	\begin{eqnarray*}
	\underset{s,W}{\min} && 2(W_{00}-W_{01}-W_{10}+W_{11})+(1-\re(s))\\
	\mathrm{s.t.} && s=(2+4\ii)(W_{11}-W_{10}),\\
	&& 0\leq\re(s)\leq 1,~ \im(s)=0,\\
	&& W_{00}=1, ~0.9\leq W_{11}\leq 1.1,\\
	&& W\{0,1\}\succeq0,
	\end{eqnarray*}
whose solution is
	$$W\{0,1\}=\begin{pmatrix}
	1 & 1-0.2\ii \\
	1+0.2\ii & 1.1
	\end{pmatrix}$$
and not rank-one. Therefore, the SOCP relaxation is not exact in this example.

\subsection{Proof of Lemma \ref{lemma: v}}\label{app: lemma v}
Let $(s,S,W,s_0)$ be an arbitrary point that satisfies \eqref{BIM 1}, \eqref{sdp}, and \eqref{PF S}. It can be verified that \eqref{sdp} implies $W_{ii}-W_{ij}-W_{ji}+W_{jj}\geq0$ for $i\rightarrow j$. Then it follows from \eqref{BIM 1} that for $i\rightarrow j$,
	\begin{eqnarray*}
	&& S_{ij}=(W_{ii}-W_{ij})y_{ij}^* \\
	&=& s_i - \sum_{h:\,h\rightarrow i} (W_{ii}-W_{ih})y_{ih}^*\\
	&=& s_i + \sum_{h:\,h\rightarrow i} S_{hi} -(W_{ii}-W_{ih}-W_{hi}+W_{hh})y_{ih}^*\\
	&\leq& s_i + \sum_{h:\,h\rightarrow i} S_{hi}.
	\end{eqnarray*}
On the other hand, $\hat{S}_{ij}(s)$ is the solution to
	\begin{equation*}
	S_{ij} = s_i + \sum_{h:\,h\rightarrow i} S_{hi}, \qquad i\rightarrow j.
	\end{equation*}
Hence, by induction from the leaf edges, one can show that
	\begin{equation*}
	S_{ij} \leq \hat{S}_{ij}(s), \qquad i\rightarrow j.
	\end{equation*}

At last,
	\begin{eqnarray*}
	&& 0 \leq W_{ii}-W_{ij}-W_{ji}+W_{jj}\\
	&=& 2\re(W_{ii}-W_{ij}) + W_{jj}-W_{ii}\\
	&=& 2\re(z_{ij}^*S_{ij}) + W_{jj}-W_{ii}\\
	&\leq& 2\re(z_{ij}^*\hat{S}_{ij}(s)) + W_{jj}-W_{ii}
	\end{eqnarray*}
for $i\rightarrow j$. Now, sum up the inequality over $\hP_i$ to obtain
	\begin{eqnarray*}
	0 \leq 2\sum_{(j,k)\in\hP_i}\re(z_{jk}^*\hat{S}_{jk}(s)) +W_{00}-W_{ii}
	\end{eqnarray*}
for $i\in \hN^+$, which implies $W_{ii}\leq \hat{W}_{ii}(s)$ for $i\in \hN^+$.

\subsection{Proof of Theorem \ref{thm: unique}}\label{app: thm unique}
Suppose that SOCP-m is convex, exact, and has at least one solution. Let $\tilde{w}=(\tilde{s},\tilde{W},\tilde{s}_0)$ and $\hat{w}=(\hat{s},\hat{W},\hat{s}_0)$ denote two arbitrary solutions to SOCP-m. It suffices to prove that $\tilde{w}=\hat{w}$.

It follows from SOCP-m being exact that $\tilde{W}_{ii}\tilde{W}_{jj}=\tilde{W}_{ij}\tilde{W}_{ji}$ and $\hat{W}_{ii}\hat{W}_{jj}=\hat{W}_{ij}\hat{W}_{ji}$ for $i\rightarrow j$. For any $\theta\in(0,1)$, define $w(\theta)\eqdef\theta \tilde{w}+(1-\theta)\hat{w}$. It follows from SOCP-m being convex that $w(\theta)$ is optimal for SOCP-m. Therefore, $W_{ii}(\theta)W_{jj}(\theta)=W_{ij}(\theta)W_{ji}(\theta)$ for $i \rightarrow j$. Substitute $W_{ij}(\theta) = \theta \tilde{W}_{ij} + (1-\theta) \hat{W}_{ij}$ for $i=j$ or $i\sim j$ to obtain $\tilde{W}_{ii}\hat{W}_{jj} + \hat{W}_{ii}\tilde{W}_{jj} = \tilde{W}_{ij}\hat{W}_{ji}+\hat{W}_{ij}\tilde{W}_{ji}$ for $i\rightarrow j$.

It follows that
	\begin{eqnarray*}
	&& \tilde{W}_{ii}\hat{W}_{jj} + \hat{W}_{ii}\tilde{W}_{jj}= \tilde{W}_{ij}\hat{W}_{ji}+\hat{W}_{ij}\tilde{W}_{ji}\\
	&\leq& 2|\tilde{W}_{ij}||\hat{W}_{ij}|= 2\sqrt{\tilde{W}_{ii}\tilde{W}_{jj}\hat{W}_{ii}\hat{W}_{jj}}\\
	&\leq& \tilde{W}_{ii}\hat{W}_{jj} + \hat{W}_{ii}\tilde{W}_{jj}
	\end{eqnarray*}
for $i\rightarrow j$. The two inequalities must both attain equalities. The first inequality attaining equality implies that $\angle\tilde{W}_{ij}=\angle\hat{W}_{ij}$. The second inequality attaining equality implies that $\hat{W}_{ii}/\tilde{W}_{ii}=\hat{W}_{jj}/\tilde{W}_{jj}$. Define $\eta_i:=\hat{W}_{ii}/\tilde{W}_{ii}$ for $i\in \hN$, then $\eta_0=1$ and $\eta_i=\eta_j$ if $i\rightarrow j$. It follows that $\eta_i=1$ for $i\in \hN$, which implies $\hat{W}_{ii}=\tilde{W}_{ii}$ for $i\in \hN$.

Then, $|\hat{W}_{ij}|=\sqrt{\hat{W}_{ii}\hat{W}_{jj}}=\sqrt{\tilde{W}_{ii}\tilde{W}_{jj}}=|\tilde{W}_{ij}|$ for $i\rightarrow j$. Since it has been proved that $\angle\hat{W}_{ij}=\angle\tilde{W}_{ij}$ for $i\rightarrow j$, we have $\hat{W}_{ij}=\tilde{W}_{ij}$ for $i\rightarrow j$.

To summarize, we have shown that $\hat{W}=\tilde{W}$, from which $\hat{w}=\tilde{w}$ follows. This completes the proof of Theorem \ref{thm: unique}.

\section{Other sufficient conditions}
We use the proof technique of Lemma \ref{lemma: exact}---{\it primal construction method}---to obtain some other sufficient conditions for the exactness of the SOCP relaxation in this appendix, more specifically, the conditions derived in \cite{Masoud11,Bose12,Gan12}.


\subsection{Conditions in \cite{Masoud11,Bose12}}
We first use the primal construction method to derive the conditions given in \cite{Masoud11,Bose12}. To state the results, define
    $$\theta_{ij}:=\angle z_{ij}=\mathrm{arctan}(x_{ij}/r_{ij})$$
as the angle of impedance on line $i\rightarrow j$, and angles
    \begin{eqnarray*}
    \delta_{ij}(\overline{p}_i) &\eqdef& -\theta_{ij},\\
    \delta_{ij}(\overline{q}_i) &\eqdef& -\theta_{ij}+\pi/2,\\
    \delta_{ij}(\underline{p}_i) &\eqdef& -\theta_{ij}+\pi,\\
    \delta_{ij}(\underline{q}_i) &\eqdef& -\theta_{ij}-\pi/2,\\
    \delta_{ij}(\overline{p}_j) &\eqdef& \theta_{ij},\\
    \delta_{ij}(\overline{q}_j) &\eqdef& \theta_{ij}-\pi/2,\\
    \delta_{ij}(\underline{p}_j) &\eqdef& \theta_{ij}-\pi,\\
    \delta_{ij}(\underline{q}_j) &\eqdef& \theta_{ij}+\pi/2
    \end{eqnarray*}
for each bound $\underline{p}_i$, $\overline{p}_i$, $\underline{q}_i$, $\overline{q}_i$, $\underline{p}_j$, $\overline{p}_j$, $\underline{q}_j$, $\overline{q}_j$ related to $i\rightarrow j$.
    \begin{figure}[h]
     	\centering
     	\includegraphics[scale=0.15]{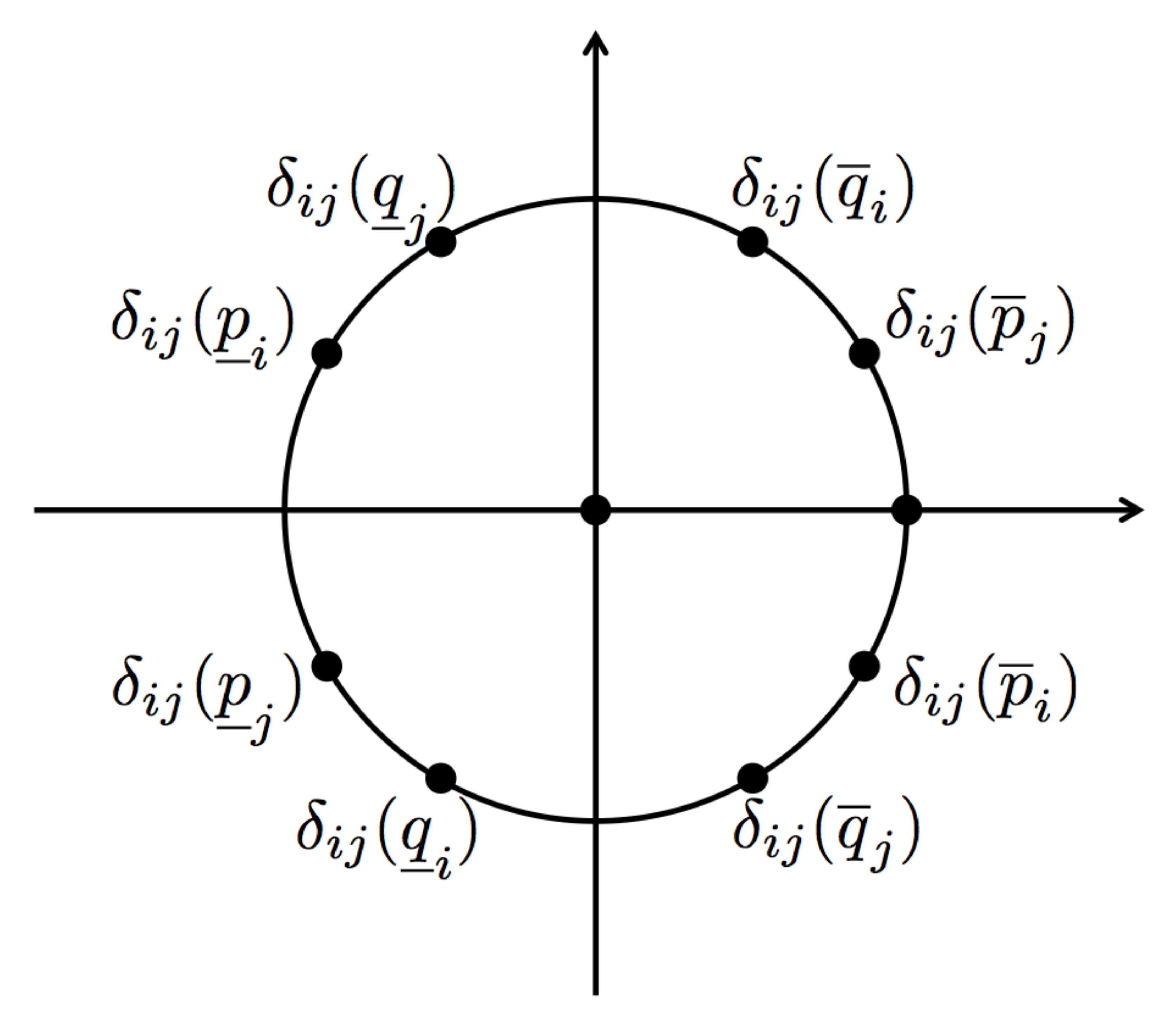}
      	\caption{Illustration of the bounds for line $i\rightarrow j$.}
      	\label{fig: position}
	\end{figure}

Associate a unit circle for every line $i\rightarrow j$, and draw the angles $\delta_{ij}(*)$ corresponding to finite bounds on the unit circle. For example, if all 8 bounds are finite, then the unit circle for line $i\rightarrow j$ is shown in Fig. \ref{fig: position}.

It is proved in Theorem \ref{thm: local exact} that if the bounds for every line satisfies some pattern, more specifically every line is {\it well constrained}, then the SOCP relaxation is exact. The definition of well-constrained is given below.
\begin{definition}
A line $i\rightarrow j$ is {\it well-constrained} if the angles $\delta_{ij}(*)$ corresponding to finite bounds can be placed in a semi-circle that contains angle 0 in its interior.
\end{definition}
To be concrete, we provide three cases where a line $i\rightarrow j$ is well-constrained.
\begin{itemize}
\item[a)] If $\underline{p}_i=\underline{q}_i=\underline{p}_j=\underline{q}_j=-\infty$, then line $i\rightarrow j$ is well-constrained and the semi-circle can be chosen as $[-\pi/2,\pi/2]$. This assumption is called {\it load over-satisfaction} in literature since it requires every bus to be able to draw infinite real and reactive power.
\item[b)] Assume $r_{ij}\geq x_{ij}$. If $\underline{p}_i=\underline{p}_j=-\infty$ and at least one of $\underline{q}_i$ and $\underline{q}_j$ is $-\infty$, then line $i\rightarrow j$ is well-constrained. For example if $\underline{q}_i=-\infty$, then the semi-circle can be chosen as $[\delta_{ij}(\overline{q}_j),\delta_{ij}(\underline{q}_j)]$. This is a slight improvement over load over-satisfaction since it does not require removing all the lower bounds on reactive power injections.
\item[c)] Assume $r_{ij}\leq x_{ij}$. If $\underline{q}_i=\underline{q}_j=-\infty$ and at least one of $\underline{p}_i$ and $\underline{p}_j$ is $-\infty$, then line $i\rightarrow j$ is well-constrained. For example if $\underline{p}_i=-\infty$, then the semi-circle can be chosen as $[\delta_{ij}(\underline{p}_j),\delta_{ij}(\overline{p}_j)]$. This case is similar to case (b).
\end{itemize}

Now we are ready to formally state the sufficient condition for the exactness of the SOCP relaxation.
\begin{theorem}\label{thm: local exact}
Assume $f_i$ is strictly increasing and there exists $\underline{p}_i$, $\overline{p}_i$, $\underline{q}_i$, and $\overline{q}_i$ such that $\mathcal{S}_i=\{s\in\mathbb{C}~|~\underline{p}_i\leq \re(s)\leq \overline{p}_i,~\underline{q}_i\leq \im(s)\leq \overline{q}_i\}$ for $i\in\hN^+$. If every line $i\rightarrow j$ is well-constrained, then the SOCP relaxation is exact.
\end{theorem}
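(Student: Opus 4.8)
The plan is to apply the primal construction method of Lemma \ref{lemma: exact}: assume toward a contradiction that some optimal solution $w$ of the SOCP relaxation fails to be rank-one, exhibit a nearby feasible point with strictly smaller cost, and conclude that no such $w$ exists. I would work in the branch-flow variables $(s,S,\ell,v,s_0)$ of Appendix \ref{app: transformation}, so that failure of rank-one means $\ell_{ij}>|S_{ij}|^2/v_i$ on some line $i\rightarrow j$; fix such a line with strict SOC slack.

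The construction I would use is a purely \emph{local}, \emph{voltage-preserving} perturbation of the single edge $i\rightarrow j$. For a complex parameter $\zeta$ with $\re\zeta<0$, set $\Delta S_{ij}=z_{ij}\zeta$ and $\Delta\ell_{ij}=2\re\zeta<0$, leaving every other $\ell$ and every flow off the edge unchanged. A direct check (using $z_{ij}^*z_{ij}=|z_{ij}|^2$) shows $v_i-v_j$ is unchanged, so with the root voltage and everything upstream held fixed all voltages are preserved exactly; the balance equations \eqref{BFM 1}--\eqref{BFM 2} then force exactly $\Delta s_i=z_{ij}\zeta$ and $\Delta s_j=z_{ij}\overline{\zeta}$, with $s$ unchanged elsewhere. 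Because voltages are untouched, every bound \eqref{BFM v} and every other SOC constraint \eqref{relax} is automatically preserved, and the slack SOC constraint on $i\rightarrow j$ survives for $|\zeta|$ small by continuity. Thus the only constraints that can obstruct the move are the box constraints $s_i\in\mathcal{S}_i$ and $s_j\in\mathcal{S}_j$ at the two endpoints---this is the payoff of keeping $v$ fixed, and it is what lets us bypass the voltage-upper-bound modification needed in Lemma \ref{lemma: exact}.

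Next I would translate feasibility and cost decrease into a single geometric condition on the direction of $\zeta$. Each active finite bound of $\mathcal{S}_i$ or $\mathcal{S}_j$ forbids $\Delta s_i$ (resp. $\Delta s_j$) from having a positive component along an outward normal; pulling this back through $\Delta s_i=z_{ij}\zeta$ and $\Delta s_j=z_{ij}\overline{\zeta}$ (which rotate by $\pm\theta_{ij}$) shows that the forbidden $\zeta$-directions are exactly the angles $\delta_{ij}(\cdot)$ defined before the theorem. Moreover, since $\re\Delta s_i+\re\Delta s_j=2r_{ij}\re\zeta<0$, the cost strictly decreases as soon as $\re\Delta s_i\le0$ and $\re\Delta s_j\le0$, and these two requirements are precisely the conditions that $\zeta$ avoid $\delta_{ij}(\overline{p}_i)=-\theta_{ij}$ and $\delta_{ij}(\overline{p}_j)=\theta_{ij}$, which are themselves finite-bound angles. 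Hence it suffices to produce one $\zeta$ with $\re\zeta<0$ lying in the closed half-plane opposite to every $\delta_{ij}(\cdot)$ of a finite bound. The well-constrained hypothesis---all such angles inside an open semicircle whose interior contains $0$---is exactly the certificate: the direction opposite the center of that semicircle lies in the common safe cone, and because $0$ is interior it has strictly negative real part, so a valid $\zeta$ exists. Feeding it back yields a feasible point of strictly smaller cost, contradicting optimality and proving exactness.

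The main obstacle is the geometric bookkeeping of the third step: verifying that all eight definitions $\delta_{ij}(\cdot)$ indeed coincide with the forbidden $\zeta$-directions under the $\pm\theta_{ij}$ rotations, and that ``angle $0$ strictly interior to a bounding semicircle'' is equivalent to the existence of a safe direction with $\re\zeta<0$ (a Gordan/separating-hyperplane statement). One must also confirm that the strict descent survives to a genuine finite step: since each $f_i$ is strictly increasing and the perturbation is exactly affine in $\zeta$, taking $|\zeta|$ small keeps the inactive box bounds and the slack SOC inequality satisfied while the cost change stays strictly negative. Degenerate endpoints---when $j$ is the substation, where $s_0$ is free and $f_0$ strictly increasing supplies the descent, or when an upper real-power bound is infinite---require small separate checks but no new ideas.
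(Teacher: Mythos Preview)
Your proposal is correct and follows essentially the same route as the paper's proof. The paper carries out the identical local, voltage-preserving perturbation but in the $W$-variables: it sets $W_{ij}'=W_{ij}+\rho e^{\ii\alpha}$ (and conjugate for $W_{ji}$) with $\alpha\in(-\pi/2,\pi/2)$, which under the branch-flow transformation is exactly your move $\Delta S_{ij}=z_{ij}\zeta$, $\Delta\ell_{ij}=2\re\zeta$ with $\zeta=-\rho|y_{ij}|^2 e^{\ii\alpha}$; both versions keep every $W_{kk}$ (equivalently $v_k$) fixed, change only $s_i,s_j$, and reduce feasibility plus descent to the same semicircle condition on the $\delta_{ij}(\cdot)$ angles.
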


Before proving Theorem \ref{thm: local exact}, we highlight that it is equivalent to the result in \cite{Bose12}, and stronger than the result in \cite{Masoud11}, which says that the SOCP relaxation is exact if there are no lower bounds on power injections.

\begin{corollary}\label{cor: load over-satisfaction}
Assume $f_i$ is strictly increasing and there exists $\underline{p}_i$, $\overline{p}_i$, $\underline{q}_i$, and $\overline{q}_i$ such that $\mathcal{S}_i=\{s\in\mathbb{C}~|~\underline{p}_i\leq \re(s)\leq \overline{p}_i,~\underline{q}_i\leq \im(s)\leq \overline{q}_i\}$ for $i\in\hN^+$. The SOCP relaxation is exact if there are no lower bounds on power injections, i.e., $\underline{p}_i=\underline{q}_i=-\infty$ for $i\in \hN^+$.
\end{corollary}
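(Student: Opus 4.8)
The plan is to deduce Corollary \ref{cor: load over-satisfaction} directly from Theorem \ref{thm: local exact}, so that the entire analytical burden is carried by that theorem and the corollary reduces to checking that its hypothesis is met. Concretely, I would argue that the assumption ``no lower bounds on power injections'' forces every line $i\rightarrow j$ to be well-constrained, which is exactly the situation covered by case (a) in the discussion preceding the corollary.

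First I would observe that when $\underline{p}_i=\underline{q}_i=-\infty$ for all $i\in\hN^+$, the only finite bounds that can be associated with a line $i\rightarrow j$ are the upper bounds $\overline{p}_i,\overline{q}_i,\overline{p}_j,\overline{q}_j$. Consequently the only angles $\delta_{ij}(\ast)$ that need to be placed on the unit circle are drawn from the set $\{-\theta_{ij},\,-\theta_{ij}+\pi/2,\,\theta_{ij},\,\theta_{ij}-\pi/2\}$, where $\theta_{ij}=\arctan(x_{ij}/r_{ij})$.

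Next I would use the physical positivity $r_{ij}>0$, $x_{ij}>0$, which gives $\theta_{ij}\in(0,\pi/2)$, and perform the (routine) angle check: $-\theta_{ij}$ and $\theta_{ij}-\pi/2$ lie in $(-\pi/2,0)$, while $-\theta_{ij}+\pi/2$ and $\theta_{ij}$ lie in $(0,\pi/2)$. Hence all the finite-bound angles lie strictly inside $(-\pi/2,\pi/2)$, and the semicircle $[-\pi/2,\pi/2]$ contains them all while having angle $0$ in its interior. This is precisely the content of case (a), so every line $i\rightarrow j$ is well-constrained.

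Finally I would invoke Theorem \ref{thm: local exact}: since $f_i$ is strictly increasing for $i\in\hN$ and every line is well-constrained, the SOCP relaxation is exact, which completes the proof. The key point to flag is that there is \emph{no} genuine obstacle at the level of the corollary itself---the only verification required is the elementary angle placement above. All of the real difficulty sits inside Theorem \ref{thm: local exact}, whose proof uses the same primal-construction technique developed for Lemma \ref{lemma: exact} (constructing a strictly better feasible point of the relaxation whenever a solution violates the rank/equality constraint), and it is there that the well-constrained hypothesis is actually exploited.
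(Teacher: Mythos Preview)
Your proposal is correct and follows exactly the same route as the paper: the paper's proof is the single sentence ``Corollary \ref{cor: load over-satisfaction} follows from Theorem \ref{thm: local exact} and the fact that no lower bounds on power injections implies every line is well-constrained (case (a)),'' and your write-up simply spells out the elementary angle check underlying case (a). Nothing further is needed.
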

Corollary \ref{cor: load over-satisfaction} follows from Theorem \ref{thm: local exact} and the fact that no lower bounds on power injections implies every line is well-constrained (case (a)).

Now we give the proof of Theorem \ref{thm: local exact}.
\begin{proof}
It suffices to prove that at the optimal solution $w=(s,W,s_0)$ of the SOCP relaxation, the equality $W_{ii}W_{jj}=|W_{ij}|^2$ holds for every $i\rightarrow j$. We prove this by contradiction: otherwise, $W_{ii}W_{jj} > |W_{ij}|^2$ for some $i\rightarrow j$, and one can construct another feasible point $w'=(s',W',s_0')$ of the SOCP relaxation that has a smaller objective value than $w$, which contradicts with $w$ being optimal for the SOCP relaxation.

The construction of $w'$ is as follows: pick some $\rho>0$ and $\alpha\in(-\pi/2,\pi/2)$, construct $W'$ by
    $$W_{kl}'=\begin{cases}
    W_{kl} & \text{if }\{k,l\}\neq\{i,j\}\\
    W_{ij}+\rho\exp(j\alpha) & \text{if }(k,l)=(i,j)\\
    W_{ji}+\rho\exp(-j\alpha) & \text{if }(k,l)=(j,i),
    \end{cases}$$
and construct $(s',s_0')$ by \eqref{BIM 1}. It can be verified that $w'$ has a smaller objective value than $w$. Hence, it suffices to prove that $w'$ is feasible for the SOCP relaxation for some appropriately chosen $\rho>0$ and $\alpha\in(-\pi/2,\pi/2)$.

The point $w'$ satisfies \eqref{BIM 3} since $W_{ii}'=W_{ii}$ for $i\in \hN^+$. The point $w'$ also satisfies \eqref{sdp} if $\rho$ is sufficiently small. We construct $w'$ in a way that \eqref{BIM 1} is satisfied, then $w'$ may only violate \eqref{BIM 2} out of all the constraints in the SOCP relaxation. In the rest of the proof, we show that if $i\rightarrow j$ is well-constrained, then there exists $\alpha\in(-\pi/2,\pi/2)$ such that \eqref{BIM 2} is satisfied.

Noting that $s_k'=s_k$ if $k\neq i,j$, it suffices to look at $s_i'$ and $s_j'$ to check \eqref{BIM 2}. Define $\Delta s_i\eqdef s_i'-s_i$, then
    \begin{eqnarray*}
    \Delta s_i = -y_{ij}^*\rho\exp(\ii\alpha) = |y_{ij}|\rho\exp\left[\ii(\alpha+\theta_{ij}+\pi)\right]
    \end{eqnarray*}
since $\angle y_{ij}^*=\angle z_{ij}=\theta_{ij}$. To guarantee $\re(s_i')\leq \overline{p}_i$, it suffices if $\re(\Delta s_i)\leq0$, which is equivalent to $-\pi/2-\theta_{ij} \leq \alpha \leq \pi/2-\theta_{ij}$, i.e., $\alpha$ is in the semi-circle centered at $-\theta_{ij}=\delta_{ij}(\overline{p}_i)$.
To summarize,
    $$\re(s_i')\leq \overline{p}_i ~ \Leftarrow ~ \alpha \text{ in the semi-circle centered at } \delta_{ij}(\overline{p}_i).$$
Similarly,
    \begin{eqnarray*}
    \re(s_i')\geq \underline{p}_i & \Leftarrow & \alpha \text{ in the semi-circle centered at } \delta_{ij}(\underline{p}_i),\\
    \im(s_i')\leq \overline{q}_i & \Leftarrow & \alpha \text{ in the semi-circle centered at } \delta_{ij}(\overline{q}_i),\\
    \im(s_i')\geq \underline{q}_i & \Leftarrow & \alpha \text{ in the semi-circle centered at } \delta_{ij}(\underline{q}_i),\\
    \re(s_j')\leq \overline{p}_j & \Leftarrow & \alpha \text{ in the semi-circle centered at } \delta_{ij}(\overline{p}_j),\\
    \re(s_j')\geq \underline{p}_j & \Leftarrow & \alpha \text{ in the semi-circle centered at } \delta_{ij}(\underline{p}_j),\\
    \im(s_j')\leq \overline{q}_j & \Leftarrow & \alpha \text{ in the semi-circle centered at } \delta_{ij}(\overline{q}_j),\\
    \im(s_j')\geq \underline{q}_j & \Leftarrow & \alpha \text{ in the semi-circle centered at } \delta_{ij}(\underline{q}_j).
    \end{eqnarray*}
If $i\rightarrow j$ is well-constrained, then there exists a semicircle with angle 0 in its interior, that contains all the angles $\delta_{ij}(*)$ corresponding to finite bounds. It can be verified that the center of this semi-circle is an appropriate choice of $\alpha$, with which $w'$ satisfies \eqref{BIM 2}. This completes the proof of Theorem \ref{thm: local exact}.
\end{proof}

\subsection{Conditions in \cite{Gan12}}
We use the primal construction method to derive the conditions given in \cite{Gan12}. To state the results, recall the definition of $\hat{S}_{ij}(p+\ii q) = \hat{P}_{ij}(p)+\ii \hat{Q}_{ij}(q)$ in Section \ref{sec: related work} and the definition of $\underline{v}_i$ and $\overline{v}_i$ in Appendix \ref{app: transformation}.
	\begin{lemma}\label{thm: recover}
	Assume $f_0$ is strictly increasing and there exists $\overline{p}_i$ and $\overline{q}_i$ such that \eqref{relax constraints} holds for $i\in\hN^+$. Then the SOCP relaxation is exact if $\overline{v}_i=\infty$ for $i\in \hN^+$ and any one of the following conditions holds.
	\begin{itemize}
	\item[(i)] $\hat{P}_{ij}(\overline{p})\leq0$, $\hat{Q}_{ij}(\overline{q})\leq0$ for all $i\rightarrow j$.
	
	\item[(ii)] $r_{ij}/x_{ij}=r_{jk}/x_{jk}$ for all $i\rightarrow j$, $j\rightarrow k$, and\\ $\underline{v}_i-2r_{ij}\hat{P}_{ij}^+(\overline{p})-2x_{ij}\hat{Q}_{ij}^+(\overline{q})>0$ for all $i\rightarrow j$.
	
	\item[(iii)] $r_{ij}/x_{ij}\geq r_{jk}/x_{jk}$ for all $i\rightarrow j$, $j\rightarrow k$, and\\
	$\hat{P}_{ij}(\overline{p})\leq0$, $\underline{v}_i-2x_{ij}\hat{Q}_{ij}^+(\overline{q})>0$ for all $i\rightarrow j$.
	
	\item[(iv)] $r_{ij}/x_{ij}\leq r_{jk}/x_{jk}$ for all $i\rightarrow j$, $j\rightarrow k$, and\\
	$\hat{Q}_{ij}(\overline{q})\leq0$, $\underline{v}_i-2r_{ij}\hat{P}_{ij}^+(\overline{p})>0$ for all $i\rightarrow j$.
	\end{itemize}
	\end{lemma}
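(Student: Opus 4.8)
The plan is to reuse the primal-construction machinery already developed for Lemma \ref{lemma: exact}. By Corollary \ref{corollary: key tree}, once $f_0$ is strictly increasing and every optimal solution $w$ satisfies both $\hat v_i(s)\le\overline v_i$ and the key inequality \eqref{key step tree}, the SOCP relaxation is exact. Since each of the four hypotheses assumes $\overline v_i=\infty$ for $i\in\hN^+$, the requirement $\hat v_i(s)\le\overline v_i$ is vacuous, so the entire burden reduces to verifying \eqref{key step tree}, namely $A_{i_t,i_{t-1}}(w)\cdots A_{i_{n_i-1},i_{n_i-2}}(w)\,u_{i_{n_i},i_{n_i-1}}>0$ along every path, for an arbitrary feasible $w=(s,S,\ell,v,s_0)$. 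As in the proof of Lemma \ref{lemma: main condition tree}, I would first record the uniform bounds $v_k\ge\underline v_k$, $P_{kl}^+\le\hat P_{kl}^+(\overline p)$ and $Q_{kl}^+\le\hat Q_{kl}^+(\overline q)$ (consequences of \eqref{relax constraints} and Lemma \ref{lemma: v}); these convert the per-line numerical hypotheses into sign information about $c_{kl}(w),d_{kl}(w),e_{kl}(w),f_{kl}(w)$.

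Cases (i) and (ii) are short. Under (i), $\hat P_{ij}(\overline p)\le0$ and $\hat Q_{ij}(\overline q)\le0$ force $P_{ij}^+=Q_{ij}^+=0$, so every $A_{ij}(w)$ is the identity and the left side of \eqref{key step tree} collapses to $u_{i_{n_i},i_{n_i-1}}>0$. Under (ii) the common ratio $r_{ij}/x_{ij}\equiv\rho$ makes $(\rho,1)^T$ a shared eigenvector of every $A_{ij}(w)$, with eigenvalue $\mu_{ij}=1-2(r_{ij}P_{ij}^++x_{ij}Q_{ij}^+)/v_i$; the positivity hypothesis $\underline v_i-2r_{ij}\hat P_{ij}^+(\overline p)-2x_{ij}\hat Q_{ij}^+(\overline q)>0$ together with the uniform bounds gives $\mu_{ij}>0$. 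Since each $u_{ij}$ is a positive multiple of $(\rho,1)^T$, the product in \eqref{key step tree} is a product of positive scalars times a positive vector, hence positive.

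The substantive cases are (iii) and (iv), which I expect to be the main obstacle. Take (iii): $\hat P_{ij}(\overline p)\le0$ gives $P_{ij}^+=0$, so $c_{ij}(w)=1$, $e_{ij}(w)=0$ and each $A_{ij}(w)=\bigl(\begin{smallmatrix}1 & -d_{ij}\\ 0 & f_{ij}\end{smallmatrix}\bigr)$ is upper triangular; the hypothesis $\underline v_i-2x_{ij}\hat Q_{ij}^+(\overline q)>0$ yields $f_{ij}(w)>0$, and a short computation gives $d_{ij}=\rho_{ij}(1-f_{ij})\in[0,\rho_{ij})$ with $\rho_{ij}:=r_{ij}/x_{ij}$. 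The key algebraic fact is that $(\rho_{ij},1)^T$ is an eigenvector of $A_{ij}(w)$ with positive eigenvalue $f_{ij}$. I would then track the ratio $R^{(t)}$ of the two components of the partial product $V^{(t)}:=A_{i_t,i_{t-1}}(w)\cdots A_{i_{n_i-1},i_{n_i-2}}(w)\,u_{i_{n_i},i_{n_i-1}}$, which obeys the one-step recursion $R^{(t)}=\rho_t+(R^{(t+1)}-\rho_t)/f_t$, where $\rho_t,f_t$ refer to line $(i_t,i_{t-1})$. Because (iii) orders the ratios so that $\rho_t$ is nondecreasing as one descends toward the leaf, the invariant $R^{(t)}\ge\rho_t$ propagates from the base $R^{(n_i)}=\rho_{n_i}$ up toward the root, and this together with $d_t<\rho_t$ keeps both components of every $V^{(t)}$ strictly positive. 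Case (iv) is the mirror image: $Q_{ij}^+=0$ makes $A_{ij}(w)$ lower triangular with the same eigenvector $(\rho_{ij},1)^T$, and the reversed ordering of $r/x$ drives the analogous invariant.

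The crux, and the reason Lemma \ref{lemma: matrix multiplication} cannot simply be quoted here, is that the aggregate bound used for Condition C1 is genuinely too weak for (iii) and (iv): the summed off-diagonal $\sum d_{kl}$ over a path can exceed $\rho_{ij}$ even when the hypotheses hold, so one cannot reduce \eqref{key step tree} to a single inequality on the telescoped matrix. One must instead exploit the eigenvector structure of the triangular factors, so that the diagonal shrinkage exactly compensates the off-diagonal accumulation; establishing the ratio invariant $R^{(t)}\ge\rho_t$ cleanly is the technical heart of the proof. Once \eqref{key step tree} is verified in all four cases, the conclusion follows immediately from Corollary \ref{corollary: key tree}.
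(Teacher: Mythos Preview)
Your proposal is correct and follows essentially the same route as the paper: reduce to verifying \eqref{key step tree} via Corollary \ref{corollary: key tree} (the constraint $\hat v_i(s)\le\overline v_i$ being vacuous when $\overline v_i=\infty$), then handle the four cases separately---identity matrices in (i), a common eigenvector with positive eigenvalue in (ii), and a ratio-tracking induction for the triangular factors in (iii)/(iv). The paper carries out exactly this program (Claims \ref{claim: no reverse}--\ref{claim: Q} for a one-line network, with the tree case deferred to the machinery of Appendix \ref{app: tree}); the only cosmetic difference is that in case (iii) the paper maintains the slightly stronger invariant $R^{(t)}\ge\rho_{n_i}$ (the leaf ratio, fixed) rather than your $R^{(t)}\ge\rho_t$, but both propagate for the same reason and either suffices.
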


Before proving Lemma \ref{thm: recover}, we highlight that it is identical to Proposition \ref{prop: v}, except for the additional requirements $\underline{v}_i-2r_{ij}\hat{P}_{ij}^+(\overline{p})-2x_{ij}\hat{Q}_{ij}^+(\overline{q})>0$, $\underline{v}_i-2x_{ij}\hat{Q}_{ij}^+(\overline{q})>0$, and $\underline{v}_i-2r_{ij}\hat{P}_{ij}^+(\overline{p})>0$ for $i\rightarrow j$ that are always satisfied in practice.

Condition (i) holds if $\overline{p}_i\leq0$ and $\overline{q}_i\leq0$ for $i\in \hN^+$. This happens if there is neither shunt capacitors nor distributed generators in the network. Condition (ii) holds if the network uses uniform distribution lines. Condition (iii) holds if the distribution lines get thinner as they branch out from the substation, and there are no distributed generators in the network. It is widely satisfied in the current distributed networks which usually do not have distributed generation. Condition (iv) holds if the distribution lines get thicker as they branch out from the substation, and there are no shunt capacitors in the network.

We can remove the restriction of $\overline{v}_i=\infty$ in Lemma \ref{thm: recover} by imposing additional constraints \eqref{modify} on power injections as in SOCP-m, after which the constraints $v_i\leq\overline{v}_i$ are redundant.
	\begin{theorem}\label{cor: recover}
	Assume $f_0$ is strictly increasing and there exists $\overline{p}_i$ and $\overline{q}_i$ such that \eqref{relax constraints} holds for $i\in\hN^+$. Then the SOCP-m relaxation is exact if any one of the following conditions holds.
	\begin{itemize}
	\item[(i)] $\hat{P}_{ij}(\overline{p})\leq0$, $\hat{Q}_{ij}(\overline{q})\leq0$ for all $i\rightarrow j$.
	
	\item[(ii)] $r_{ij}/x_{ij}=r_{jk}/x_{jk}$ for all $i\rightarrow j$, $j\rightarrow k$, and\\ $\underline{v}_i-2r_{ij}\hat{P}_{ij}^+(\overline{p})-2x_{ij}\hat{Q}_{ij}^+(\overline{q})>0$ for all $i\rightarrow j$.
	
	\item[(iii)] $r_{ij}/x_{ij}\leq r_{jk}/x_{jk}$ for all $i\rightarrow j$, $j\rightarrow k$, and\\
	$\hat{P}_{ij}(\overline{p})\leq0$, $\underline{v}_i-2x_{ij}\hat{Q}_{ij}^+(\overline{q})>0$ for all $i\rightarrow j$.
	
	\item[(iv)] $r_{ij}/x_{ij}\geq r_{jk}/x_{jk}$ for all $i\rightarrow j$, $j\rightarrow k$, and\\
	$\hat{Q}_{ij}(\overline{q})\leq0$, $\underline{v}_i-2r_{ij}\hat{P}_{ij}^+(\overline{p})>0$ for all $i\rightarrow j$.
	\end{itemize}
	\end{theorem}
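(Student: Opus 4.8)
The plan is to obtain Theorem~\ref{cor: recover} from Lemma~\ref{thm: recover} by exactly the device that turns Lemma~\ref{lemma: exact} into Theorem~\ref{thm: exact}: the extra constraints \eqref{modify} in SOCP-m force the upper voltage bound to hold automatically, so the hypothesis $\overline{v}_i=\infty$ of Lemma~\ref{thm: recover} becomes unnecessary. I work throughout in the branch-flow form of Appendix~\ref{app: transformation}, and I note that conditions (i)--(iv) here are precisely those of Lemma~\ref{thm: recover}.

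First I would record the relationship between the two relaxations. The feasible set of SOCP-m is that of the SOCP relaxation with the upper bound $v_i\le\overline{v}_i$ deleted (equivalently $\overline{v}_i=\infty$, the setting of Lemma~\ref{thm: recover}), with the single family of constraints $\hat{v}_i(s)\le\overline{v}_i$, $i\in\hN^+$, adjoined. Moreover, by Lemma~\ref{lemma: v} every feasible point of SOCP-m satisfies $v_i=W_{ii}\le\hat{v}_i(s)\le\overline{v}_i$, so the bound $v_i\le\overline{v}_i$ is automatically met and hence redundant in SOCP-m. In particular every feasible point of SOCP-m is feasible for the relaxation treated in Lemma~\ref{thm: recover}.

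Next comes the contradiction argument. Suppose some optimal solution $w=(s,S,\ell,v,s_0)$ of SOCP-m violated \eqref{BFM 4}. Since one of (i)--(iv) holds and $w$ is feasible for the $\overline{v}_i=\infty$ relaxation, I would apply the primal construction used to prove Lemma~\ref{thm: recover}: it yields a point $w'=(s',S',\ell',v',s_0')$ feasible for that relaxation and, as $f_0$ is strictly increasing, strictly cheaper. The crucial feature, shared with Algorithm~\ref{algorithm}, Algorithm~\ref{algorithm tree}, and Lemma~\ref{lemma: observation}, is that this construction leaves the injections unchanged, $s'=s$, and produces the dominance $v_i'\le\hat{v}_i(s')$. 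Consequently $\hat{v}_i(s')=\hat{v}_i(s)\le\overline{v}_i$ for $i\in\hN^+$, so $w'$ again satisfies \eqref{modify} and is therefore feasible for SOCP-m with a strictly smaller objective than $w$, contradicting optimality of $w$. Hence every optimal solution of SOCP-m satisfies \eqref{BFM 4}, i.e.\ SOCP-m is exact.

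The main obstacle is precisely this ``crucial feature'': one must confirm that the construction underlying Lemma~\ref{thm: recover} (a)~keeps $s$ fixed, and (b)~delivers $v_i'\le\hat{v}_i(s')$, so that the modification constraint \eqref{modify} transfers verbatim from $w$ to $w'$. Part (b) mirrors the chain $v_i'\le\hat{v}_i(s')=\hat{v}_i(s)\le\overline{v}_i$ established in the proof of Lemma~\ref{lemma: observation}, where it follows from $S_{ij}'\le\hat{S}_{ij}(s')$ together with \eqref{BFM 1}; the only additional care is to verify (a) and (b) uniformly across the four cases (i)--(iv) of \cite{Gan12}, the remaining bookkeeping being identical to that already carried out for Lemma~\ref{thm: recover}.
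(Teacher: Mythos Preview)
Your proposal is correct and is precisely the argument the paper has in mind: the paper does not write out a separate proof of Theorem~\ref{cor: recover} but simply states, just before the theorem, that ``We can remove the restriction of $\overline{v}_i=\infty$ in Lemma~\ref{thm: recover} by imposing additional constraints \eqref{modify} on power injections as in SOCP-m, after which the constraints $v_i\leq\overline{v}_i$ are redundant.'' Your sketch fills in exactly this: conditions (i)--(iv) force \eqref{key step tree} for every feasible $w$ (Claims~\ref{claim: no reverse}--\ref{claim: Q}), the primal construction of Algorithm~\ref{algorithm tree} keeps $s'=s$, hence $\hat{v}_i(s')=\hat{v}_i(s)\le\overline{v}_i$ and $w'$ remains feasible for SOCP-m, giving the contradiction. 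One small remark: the inequality $v_i'\le\hat{v}_i(s')$ that you list as part of the ``crucial feature'' is not actually needed for SOCP-m feasibility (SOCP-m has no constraint $v_i\le\overline{v}_i$); the only extra thing to check beyond feasibility for the $\overline{v}_i=\infty$ relaxation is \eqref{modify}, and that follows from $s'=s$ alone.
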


Now we prove Lemma \ref{thm: recover} through the primal construction method. For brevity, we prove Lemma \ref{thm: recover} for one-line networks as in Figure \ref{figure: one line}, where the notations can be simplified as in Appendix \ref{app: notation}. Generalization of the proof to tree networks is the same as that in Appendix \ref{app: tree}, and omitted for brevity.

With the simplified notations, Lemma \ref{thm: recover} is rephrased as
	\begin{lemma}\label{lemma: recover}
	Consider a one-line network. Assume $f_0$ is strictly increasing and there exists $\overline{p}_i$ and $\overline{q}_i$ such that \eqref{relax constraints} holds for $i\in\hN^+$. Then the SOCP relaxation is exact if $\overline{v}_i=\infty$ for $i\in \hN^+$ and any one of the following conditions holds.
	\begin{itemize}
	\item[(i)] $\hat{P}_i(\overline{p})\leq0$, $\hat{Q}_i(\overline{q})\leq0$ for $i\in \hN^+$.
	\item[(ii)] $r_i/x_i=r_{i+1}/x_{i+1}$ for $i=1,\ldots,n-1$, and\\ $\underline{v}_i-2r_i\hat{P}_i^+(\overline{p})-2x_i\hat{Q}_i^+(\overline{q})>0$ for $i\in \hN^+$.
	\item[(iii)] $r_i/x_i\leq r_{i+1}/x_{i+1}$ for $i=1,\ldots,n-1$, and\\
	$\hat{P}_i(\overline{p})\leq0$, $\underline{v}_i-2x_i\hat{Q}_i^+(\overline{q})>0$ for $i\in \hN^+$.
	\item[(iv)] $r_i/x_i\geq r_{i+1}/x_{i+1}$ for $i=1,\ldots,n-1$, and\\
	$\hat{Q}_i(\overline{q})\leq0$, $\underline{v}_i-2r_i\hat{P}_i^+(\overline{p})>0$ for $i\in \hN^+$.
	\end{itemize}
	\end{lemma}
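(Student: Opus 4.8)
The plan is to reduce Lemma \ref{lemma: recover} to the key inequality \eqref{key step} and verify it case by case, exactly as in the primal construction method used for Lemma \ref{lemma: main condition}. Since $\overline{v}_i=\infty$ for $i\in\hN^+$, the requirement $\hat{v}_i(s)\leq\overline{v}_i$ in Corollary \ref{corollary: key} holds automatically for every feasible point. Hence, given that $f_0$ is strictly increasing, it suffices to prove that \eqref{key step}, namely $A_j(w)\cdots A_{i-1}(w)u_i>0$ for $1\leq j\leq i\leq n$, holds at every feasible point $w$ of the SOCP relaxation under each of the four conditions. Throughout I would reuse the bounds established in the proof of Lemma \ref{lemma: main condition}: feasibility forces $P_k\leq\hat{P}_k(\overline{p})$ and $Q_k\leq\hat{Q}_k(\overline{q})$, hence $P_k^+\leq\hat{P}_k^+(\overline{p})$, $Q_k^+\leq\hat{Q}_k^+(\overline{q})$, together with $v_k\geq\underline{v}_k$.

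First I would dispose of condition (i). When $\hat{P}_i(\overline{p})\leq 0$ and $\hat{Q}_i(\overline{q})\leq 0$ for all $i$, the bounds above give $P_k^+=Q_k^+=0$, so $c_k=f_k=1$ and $d_k=e_k=0$; thus every $A_k(w)$ is the identity and $A_j(w)\cdots A_{i-1}(w)u_i=u_i=(r_i,x_i)^T>0$ trivially. For condition (ii) the idea is to exploit the common impedance ratio $\gamma:=r_i/x_i$. A direct computation shows that $(\gamma,1)^T$ is a common eigenvector of all the $A_k(w)$, namely $A_k(w)(\gamma,1)^T=\lambda_k(\gamma,1)^T$ with $\lambda_k=1-2(r_kP_k^++x_kQ_k^+)/v_k$. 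Since $u_i=x_i(\gamma,1)^T$, one obtains $A_j(w)\cdots A_{i-1}(w)u_i=x_i\big(\prod_{k=j}^{i-1}\lambda_k\big)(\gamma,1)^T$, which is positive once each $\lambda_k>0$; and the additional hypothesis gives precisely $\lambda_k\geq(\underline{v}_k-2r_k\hat{P}_k^+(\overline{p})-2x_k\hat{Q}_k^+(\overline{q}))/\underline{v}_k>0$.

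The more delicate cases are (iii) and (iv), which I expect to be the main obstacle. Under (iii), $\hat{P}_i(\overline{p})\leq0$ forces $P_k^+=0$, so $A_k(w)$ is upper triangular with diagonal $(1,f_k)$ and $f_k\in(0,1]$, positivity of $f_k$ coming from $\underline{v}_k-2x_k\hat{Q}_k^+(\overline{q})>0$. Writing $w_t:=A_t(w)\cdots A_{i-1}(w)u_i$ and running the recursion $w_t=A_t(w)w_{t+1}$ from $w_i=u_i$, the second coordinate is $x_i\prod_{l=t}^{i-1}f_l>0$, while the first coordinate equals $r_i-x_i\sum_{k=t}^{i-1}d_k\prod_{l=k+1}^{i-1}f_l$. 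The key step is to bound this using the ordering $r_k/x_k\leq r_i/x_i$ (so $x_id_k\leq r_i\cdot 2x_kQ_k^+/v_k$) together with the identity $2x_kQ_k^+/v_k=1-f_k$, which turns the sum into the telescoping expression $\sum_{k=t}^{i-1}(1-f_k)\prod_{l=k+1}^{i-1}f_l=1-\prod_{l=t}^{i-1}f_l<1$. This gives a first coordinate bounded below by $r_i\prod_{l=t}^{i-1}f_l>0$, establishing \eqref{key step}. Case (iv) is the mirror image under $r\leftrightarrow x$, $P\leftrightarrow Q$ and the reversed ordering $r_k/x_k\geq r_i/x_i$: there $Q_k^+=0$ makes $A_k(w)$ lower triangular, and the same telescoping bound on the second coordinate, now using $2r_kP_k^+/v_k=1-c_k$, yields positivity. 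Finally I would assemble the pieces: in each case \eqref{key step} holds at every feasible point, so Corollary \ref{corollary: key} applies and the SOCP relaxation is exact. The generalization to tree networks follows verbatim by replacing the chain of matrices along the line with the chain along the path $\hP_i$, exactly as in Appendix \ref{app: tree}.
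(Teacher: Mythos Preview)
Your proposal is correct and follows essentially the same strategy as the paper: reduce to Corollary~\ref{corollary: key} (the paper states this as Claim~\ref{claim: key}) and then verify \eqref{key step} case by case, with case (iv) obtained from (iii) by the $r\leftrightarrow x$, $P\leftrightarrow Q$ symmetry. The only cosmetic differences are that for (i) you observe directly that each $A_k$ is the identity (the paper instead notes that C1 holds and invokes Lemma~\ref{lemma: main condition}), and for (iii) you unroll the recursion into a closed-form sum and apply the telescoping identity $\sum_{k=t}^{i-1}(1-f_k)\prod_{l>k}f_l=1-\prod_{l\geq t}f_l$, whereas the paper carries the invariant $\alpha_{ji}/\beta_{ji}\geq r_i/x_i$ through a step-by-step induction; these are the same argument in different clothing.
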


The following claim forms the basis of the proof of Lemma \ref{lemma: recover}. Its proof is the same as that of Corollary \ref{corollary: key}, and omitted for brevity.
\begin{claim}\label{claim: key}
	Consider a one-line network. Assume that $f_0$ is strictly increasing and $\overline{v}_i=\infty$ for $i\in \hN^+$. If every optimal solution $w$ of the SOCP relaxation satisfies \eqref{key step}, then the SOCP relaxation is exact.
	\end{claim}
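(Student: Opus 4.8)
The plan is to observe that Claim \ref{claim: key} is the exact analogue of Corollary \ref{corollary: key}, with the hypothesis ``$\overline{v}_i=\infty$ for $i\in\hN^+$'' taking over the role played in Corollary \ref{corollary: key} by the assumption that every optimal solution satisfies $\hat{v}_i(s)\leq\overline{v}_i$. The guiding idea is that when $\overline{v}_i=\infty$, the voltage upper bound $v_i\leq\overline{v}_i$ is vacuous, so the one place in the feasibility argument that needed $\hat{v}_i(s)\leq\overline{v}_i$ becomes automatic. Consequently the entire contradiction argument of Corollary \ref{corollary: key} carries over verbatim, which is precisely why the paper can say ``the proof is the same.''

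Concretely, I would argue by contradiction. Assume the SOCP relaxation is not exact; then there exists an optimal solution $w=(s,S,\ell,v,s_0)$ violating \eqref{Line 7}. By hypothesis $w$ satisfies \eqref{key step}, so Corollary \ref{lemma: key lemma} guarantees that for all sufficiently small $\epsilon>0$ the constructed point $w'(\epsilon)$ satisfies \eqref{observation}. The next step is to invoke Lemma \ref{lemma: observation} to conclude that $w'(\epsilon)$ is feasible for the SOCP relaxation. Lemma \ref{lemma: observation} requires two things: that \eqref{observation} holds (already established) and that $\hat{v}_i(s)\leq\overline{v}_i$ for $i\in\hN^+$. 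Here is the crux: since $\overline{v}_i=\infty$, the inequality $\hat{v}_i(s)\leq\overline{v}_i$ holds trivially for every $s$, so the second hypothesis of Lemma \ref{lemma: observation} is satisfied unconditionally. Hence $w'(\epsilon)$ is feasible.

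To finish, I would use that $f_0$ is strictly increasing: the second bullet of Lemma \ref{lemma: observation} then gives that $w'(\epsilon)$ has a strictly smaller objective value than $w$, contradicting the optimality of $w$. Therefore no optimal solution can violate \eqref{Line 7}, i.e.\ the SOCP relaxation is exact.

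The only point that genuinely requires care---and hence the main ``obstacle,'' though it is a light one---is verifying that finiteness of $\overline{v}_i$ is used nowhere else in the feasibility proof of $w'(\epsilon)$. Inspecting the proof of Lemma \ref{lemma: observation}, the voltage upper bound $v_i'\leq\overline{v}_i$ is the sole place where $\overline{v}_i$ appears on the feasibility side, and it is derived purely from the chain $v_i'\leq\hat{v}_i(s')=\hat{v}_i(s)\leq\overline{v}_i$; the lower bound $v_i'\geq\underline{v}_i$ and the relaxed conic constraints $\ell_i'\geq|S_i'|^2/v_i'$ make no reference to $\overline{v}_i$. Thus replacing the per-solution assumption $\hat{v}_i(s)\leq\overline{v}_i$ by the blanket assumption $\overline{v}_i=\infty$ leaves every other estimate intact, and the proof of Corollary \ref{corollary: key} applies word for word.
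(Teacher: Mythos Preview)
Your proposal is correct and is exactly the approach the paper intends: the paper states that the proof of Claim \ref{claim: key} ``is the same as that of Corollary \ref{corollary: key}, and omitted for brevity,'' and your observation that $\overline{v}_i=\infty$ renders the hypothesis $\hat{v}_i(s)\leq\overline{v}_i$ of Lemma \ref{lemma: observation} automatic is precisely the one adjustment needed.
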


According to Claim \ref{claim: key}, to prove Lemma \ref{lemma: recover}, it suffices to prove that \eqref{key step} holds for  every optimal solution $w$ of the SOCP relaxation if any one of the conditions in Lemma \ref{lemma: recover} holds. This is established in Claim \ref{claim: no reverse}--\ref{claim: Q}, which completes the proof of Lemma \ref{lemma: recover}.

	\begin{claim}\label{claim: no reverse}
	Consider a one-line network. Assume that there exists $\overline{p}_i$ and $\overline{q}_i$ such that \eqref{relax constraints} holds for $i\in\hN^+$. Then \eqref{key step} holds for every feasible point $w$ of the SOCP relaxation if $\hat{P}_i(\overline{p})\leq0$, $\hat{Q}_i(\overline{q})\leq0$ for $i\in \hN^+$.
	\end{claim}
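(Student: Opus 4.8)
The plan is to show that under these hypotheses every coefficient matrix collapses to the identity, $A_i(w)=I$, after which \eqref{key step} reduces to the trivial inequality $u_i>0$. The conceptual point is that the named hypothesis is precisely a ``no reverse power flow'' condition: it guarantees $P_i\leq0$ and $Q_i\leq0$ on every line, which annihilates the off-diagonal correction terms $d_i(w),e_i(w)$ and restores $c_i(w)=f_i(w)=1$.

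First I would bound the sending-end power flows from above. Fix any feasible point $w=(s,S,\ell,v,s_0)$ of the SOCP relaxation. The relaxed constraint \eqref{line relax} gives $\ell_i\geq|S_i|^2/v_i\geq0$, and since $z_i=r_i+\ii x_i$ with $r_i,x_i>0$ we have $z_i\ell_i\geq0$ componentwise. Substituting into the flow recursion \eqref{Line 4} yields $S_{i-1}=S_i-z_i\ell_i+s_{i-1}\leq S_i+s_{i-1}$, and telescoping toward the leaf bus (using $S_n=s_n$ from \eqref{Line 6}) gives $S_i\leq\sum_{j\geq i}s_j=\hat{S}_i(s)$. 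This is exactly the monotonicity bound already derived in the proof of Lemma \ref{lemma: main condition}.

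Next I would take real and imaginary parts. By \eqref{relax constraints} we have $\re(s_j)\leq\overline{p}_j$ and $\im(s_j)\leq\overline{q}_j$, so the bound above gives $P_i\leq\hat{P}_i(p)\leq\hat{P}_i(\overline{p})$ and $Q_i\leq\hat{Q}_i(q)\leq\hat{Q}_i(\overline{q})$. The hypothesis $\hat{P}_i(\overline{p})\leq0$, $\hat{Q}_i(\overline{q})\leq0$ then forces $P_i\leq0$ and $Q_i\leq0$, hence $P_i^+=Q_i^+=0$ for every $i\in\hN^+$. Substituting $P_i^+=Q_i^+=0$ into the definitions gives $c_i(w)=f_i(w)=1$ and $d_i(w)=e_i(w)=0$, so $A_i(w)=I$ for all $i$.

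Finally, any product $A_j(w)\cdots A_{i-1}(w)u_i$ collapses to $u_i=(r_i,x_i)^T>0$, so \eqref{key step} holds for all $1\leq j\leq i\leq n$. I do not anticipate any genuine obstacle: the entire argument rests on the elementary upper bound $S_i\leq\hat{S}_i(s)$, which is already available, followed by direct substitution. The only point requiring care is the componentwise sign bookkeeping (that $z_i\ell_i\geq0$ and that the partial-order inequality survives the telescoping sum), but this is routine.
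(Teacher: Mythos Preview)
Your argument is correct, but it takes a different route from the paper. The paper's proof is a one-liner: from $\hat{P}_i(\overline{p})\leq0$ and $\hat{Q}_i(\overline{q})\leq0$ one has $\hat{P}_i^+(\overline{p})=\hat{Q}_i^+(\overline{q})=0$, which forces $a_{i-1}^1=a_{i-1}^4=1$ and $a_{i-1}^2=a_{i-1}^3=0$, so Condition C1 reduces to $r_i>0$ and $x_i>0$; then Lemma~\ref{lemma: main condition} (which in turn rests on the matrix product Lemma~\ref{lemma: matrix multiplication}) delivers \eqref{key step}. In other words, the paper exhibits this claim as a degenerate special case of the general C1 machinery.

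You instead bypass C1 and Lemma~\ref{lemma: main condition} altogether: you use the same telescoping bound $S_i\leq\hat{S}_i(s)$ that appears inside the proof of Lemma~\ref{lemma: main condition}, but apply it directly to conclude $P_i^+=Q_i^+=0$ for the \emph{actual} flow at each feasible $w$, which collapses every $A_i(w)$ to the identity and makes \eqref{key step} trivially $u_i>0$. Your approach is more elementary and self-contained (no appeal to the induction in Lemma~\ref{lemma: matrix multiplication}); the paper's approach has the virtue of showing that no new idea is needed beyond what has already been developed.
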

	
	\begin{proof}
	If $\hat{P}_i(\overline{p})\leq0$, $\hat{Q}_i(\overline{q})\leq0$ for $i=1,\ldots,n$, then C1 holds, and it follows from Lemma \ref{lemma: main condition} that \eqref{key step} holds for every feasible point $w$ of the SOCP relaxation.
	\end{proof}

	\begin{claim}\label{claim: uniform}
	Consider a one-line network. Assume that there exists $\overline{p}_i$ and $\overline{q}_i$ such that \eqref{relax constraints} holds for $i\in\hN^+$. Then \eqref{key step} holds for every feasible point $w$ of the SOCP relaxation if $r_i/x_i=r_{i+1}/x_{i+1}$  for $i=1,\ldots,n-1$, and $\underline{v}_i-2r_i\hat{P}_i^+(\overline{p})-2x_i\hat{Q}_i^+(\overline{q})>0$ for $i\in \hN^+$.
	\end{claim}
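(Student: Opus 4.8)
The plan is to exploit the uniform ratio hypothesis to exhibit a single common eigenvector shared by all the matrices $A_i(w)$. Write $\gamma\eqdef r_i/x_i$, which by assumption does not depend on $i$, so that each $u_i=(r_i,x_i)^T=x_i(\gamma,1)^T$; thus all the vectors $\{u_i\}_{i\in\hN^+}$ are positive scalar multiples of the single vector $\zeta\eqdef(\gamma,1)^T$. First I would verify by direct computation that $u_i$ is an eigenvector of $A_i(w)$: using $c_i(w)r_i-d_i(w)x_i=r_i[1-2(r_iP_i^++x_iQ_i^+)/v_i]$ together with $-e_i(w)r_i+f_i(w)x_i=x_i[1-2(r_iP_i^++x_iQ_i^+)/v_i]$, one obtains $A_i(w)u_i=\lambda_i(w)u_i$ with eigenvalue
\begin{equation*}
\lambda_i(w)\eqdef 1-\frac{2(r_iP_i^++x_iQ_i^+)}{v_i}.
\end{equation*}
Since $u_i$ is parallel to $\zeta$, this says $A_i(w)\zeta=\lambda_i(w)\zeta$ for every $i$, i.e.\ $\zeta$ is a common eigenvector of all the $A_i(w)$.

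The second step is to show that every eigenvalue $\lambda_i(w)$ is strictly positive for any feasible point $w$. Exactly as in the proof of Lemma \ref{lemma: main condition}, feasibility together with $\ell_i\geq0$ gives $P_i\leq\hat{P}_i(p)\leq\hat{P}_i(\overline{p})$ and $Q_i\leq\hat{Q}_i(q)\leq\hat{Q}_i(\overline{q})$, hence $P_i^+\leq\hat{P}_i^+(\overline{p})$ and $Q_i^+\leq\hat{Q}_i^+(\overline{q})$; moreover $v_i\geq\underline{v}_i$. Since $r_i,x_i>0$ and $P_i^+,Q_i^+\geq0$, substituting these bounds yields
\begin{equation*}
\lambda_i(w)\;\geq\;\frac{\underline{v}_i-2r_i\hat{P}_i^+(\overline{p})-2x_i\hat{Q}_i^+(\overline{q})}{\underline{v}_i}\;>\;0,
\end{equation*}
where the last inequality is precisely the added hypothesis of the claim. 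This is the single place where the extra condition $\underline{v}_i-2r_i\hat{P}_i^+(\overline{p})-2x_i\hat{Q}_i^+(\overline{q})>0$ (beyond Proposition \ref{prop: v}) is used.

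Finally I would chain the eigenvalue relation along the path. For $1\leq j\leq i\leq n$, repeated application of $A_k(w)\zeta=\lambda_k(w)\zeta$ gives
\begin{equation*}
A_j(w)\cdots A_{i-1}(w)u_i\;=\;x_i\Big(\prod_{k=j}^{i-1}\lambda_k(w)\Big)\zeta,
\end{equation*}
where the empty product (the case $j=i$) is read as $1$. Since $r,x>0$ we have $\zeta=(\gamma,1)^T>0$ and $x_i>0$, and each factor $\lambda_k(w)>0$ by the second step, so the right-hand side is strictly positive componentwise. This is exactly \eqref{key step}, so \eqref{key step} holds for every feasible point $w$ of the SOCP relaxation, as required.

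The only non-routine part of the argument is recognizing the common-eigenvector structure forced by the uniform $r/x$ ratio; once that is in hand, the positivity of the eigenvalues and the telescoping of the matrix product are elementary. I therefore expect the main difficulty to be notational---tracking $P_i^+,Q_i^+$ against their worst-case bounds $\hat{P}_i^+(\overline{p}),\hat{Q}_i^+(\overline{q})$---rather than conceptual.
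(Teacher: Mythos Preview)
Your proof is correct and is essentially the same argument as the paper's, only packaged more cleanly. The paper proves by downward induction on $j$ the hypothesis ``$\alpha_{ji}>0$, $\beta_{ji}>0$, $\alpha_{ji}/\beta_{ji}=\eta$'' where $(\alpha_{ji},\beta_{ji})^T\eqdef A_j\cdots A_{i-1}u_i$ and $\eta=r_1/x_1$; the inductive step verifies that applying $A_{k-1}$ to a vector with ratio $\eta$ returns a positive scalar multiple of that vector, the scalar being $(v_{k-1}-2r_{k-1}P_{k-1}^+-2x_{k-1}Q_{k-1}^+)/v_{k-1}$. Your observation that $\zeta=(\gamma,1)^T$ is a common eigenvector of all the $A_k(w)$ with eigenvalue $\lambda_k(w)$ is exactly this inductive invariant stated upfront, and your product formula $A_j\cdots A_{i-1}u_i=x_i\big(\prod_{k=j}^{i-1}\lambda_k(w)\big)\zeta$ is the closed form of the paper's recursion. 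The positivity bound on $\lambda_k(w)$ via $P_k^+\leq\hat P_k^+(\overline p)$, $Q_k^+\leq\hat Q_k^+(\overline q)$, $v_k\geq\underline v_k$ is also the same in both proofs.
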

	
	\begin{proof}
	For a feasible point $w$ of the SOCP relaxation, define
	$$\begin{pmatrix}\alpha_{ji}(w)\\ \beta_{ji}(w)\end{pmatrix}:=A_j(w)\cdots A_{i-1}(w)u_i$$
for $1\leq j\leq i\leq n$. Claim \ref{claim: uniform} is equivalent to $\alpha_{ji}(w)>0$ and $\beta_{ji}(w)>0$ for $1\leq j\leq i\leq n$ and all feasible $w$ to the SOCP relaxation. Fix an arbitrary feasible $w$ and an arbitrary $i\in\{1,\ldots,n\}$, it suffices to prove that $\alpha_{ji}(w)>0$ and $\beta_{ji}(w)>0$ for $j=1,\ldots,i$. In the rest of the proof, we abbreviate $\alpha_{ji}(w)$ and $\beta_{ji}(w)$ by $\alpha_{ji}$ and $\beta_{ji}$.

We prove that $\alpha_{ji}>0$ and $\beta_{ji}>0$ for $j=1,\ldots,i$ by mathematical induction on $j$. In particular, we prove the following hypothesis
$$\textbf{H1: }\alpha_{ji}>0, ~ \beta_{ji}>0, ~ \alpha_{ji}/\beta_{ji}=r_1/x_1 $$
inductively for $j=i,i-1,\ldots,1$. For brevity, define $\eta:=r_1/x_1$ and note that $r_j/x_j=\eta$ for $j\in \hN^+$ if $r_k/x_k=r_{k+1}/x_{k+1}$ for $k=1,\ldots,n-1$.

	\begin{itemize}
	\item If $j=i$, then $\alpha_{ji}=r_i$, $\beta_{ji}=x_i$, $\alpha_{ji}/\beta_{ji}=\eta$. The Hypothesis H1 holds.
	\item Assume that Hypothesis H1 holds for $j=k$ ($2\leq k\leq i$). When $j=k-1$, it can be verified that
	\begin{eqnarray*}
	\begin{pmatrix}
	\alpha_{ji}\\ \beta_{ji}
	\end{pmatrix} &=&
	\begin{pmatrix}
	1-\frac{2r_jP_j^+}{v_j} & -\frac{2r_jQ_j^+}{v_j} \\ -\frac{2x_jP_j^+}{v_j} & 1-\frac{2x_jQ_j^+}{v_j}
	\end{pmatrix}
	\begin{pmatrix}
	\alpha_{ki} \\ \beta_{ki}
	\end{pmatrix}\\
	&=&
	\frac{1}{v_j}\left(v_j-2r_jP_j^+ -2x_jQ_j^+\right)
	\begin{pmatrix}
	\alpha_{ki} \\ \beta_{ki}
	\end{pmatrix},
	\end{eqnarray*}
	therefore $\alpha_{ji}/\beta_{ji}=\alpha_{ki}/\beta_{ki}=\eta$. Besides, $\alpha_{ji}>0$, $\beta_{ji}>0$ since
	\begin{eqnarray*}
	&& v_j-2r_jP_j^+ -2r_jQ_j^+ \\
	&\geq& \underline{v}_j -2r_j\hat{P}_j^+(\overline{p}) -2x_j\hat{Q}_j^+(\overline{q})>0.
	\end{eqnarray*}
	Hence, Hypothesis H1 holds for $j=k-1$.
	\end{itemize}
It follows that Hypothesis H1 holds for $1\leq j\leq i$, which completes the proof of Claim \ref{claim: uniform}.
	\end{proof}
	
	\begin{claim}\label{claim: P}
	Consider a one-line network. Assume that there exists $\overline{p}_i$ and $\overline{q}_i$ such that \eqref{relax constraints} holds for $i\in\hN^+$. Then \eqref{key step} holds for every feasible point $w$ of the SOCP relaxation if $r_i/x_i\leq r_{i+1}/x_{i+1}$ for $i=1,\ldots,n-1$, and $\hat{P}_i(\overline{p})\leq0$, $\underline{v}_i-2x_i\hat{Q}_i^+(\overline{q})>0$ for $i\in \hN^+$.
	\end{claim}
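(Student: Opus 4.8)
The plan is to exploit the sign condition $\hat{P}_i(\overline{p})\leq 0$ to collapse the matrices $A_j(w)$ to triangular form. First I would note that for any feasible $w$ of the SOCP relaxation, the argument used in the proof of Lemma \ref{lemma: main condition} (from $\ell_i\geq0$ and \eqref{Line 4}) gives $S_i\leq\hat{S}_i(s)$, hence $P_i\leq\hat{P}_i(p)\leq\hat{P}_i(\overline{p})\leq0$ and $Q_i\leq\hat{Q}_i(q)\leq\hat{Q}_i(\overline{q})$ for $i\in\hN^+$. Thus $P_i^+=0$, so $c_i(w)=1$ and $e_i(w)=0$, and each matrix becomes upper triangular, $A_j(w)=\left(\begin{smallmatrix}1 & -d_j(w)\\ 0 & f_j(w)\end{smallmatrix}\right)$. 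The condition $\underline{v}_i-2x_i\hat{Q}_i^+(\overline{q})>0$ together with $Q_j^+\leq\hat{Q}_j^+(\overline{q})$ and $v_j\geq\underline{v}_j$ then yields $0<f_j(w)\leq1$, while $d_j(w)\geq0$.

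Next I would set up the scalar recursion. Writing $(\alpha_{ji},\beta_{ji})^{T}:=A_j(w)\cdots A_{i-1}(w)u_i$ with $(\alpha_{ii},\beta_{ii})^{T}=u_i=(r_i,x_i)^{T}$, the triangular form yields $\alpha_{ji}=\alpha_{j+1,i}-d_j\beta_{j+1,i}$ and $\beta_{ji}=f_j\beta_{j+1,i}$. The second relation immediately gives $\beta_{ji}=\left(\prod_{k=j}^{i-1}f_k\right)x_i>0$, so proving \eqref{key step} reduces to showing $\alpha_{ji}>0$ for all $1\leq j\leq i\leq n$.

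The heart of the argument is to track the ratio $\gamma_{ji}:=\alpha_{ji}/\beta_{ji}$, which obeys $\gamma_{ji}=(\gamma_{j+1,i}-d_j)/f_j$ with $\gamma_{ii}=r_i/x_i$. I would prove by downward induction on $j$ the invariant $\gamma_{ji}\geq r_j/x_j$. The base case $j=i$ holds with equality. For the inductive step the key is the exact identity
$$\frac{r_j}{x_j}f_j+d_j=\frac{r_j}{x_j}\left(1-\frac{2x_jQ_j^+}{v_j}\right)+\frac{2r_jQ_j^+}{v_j}=\frac{r_j}{x_j},$$
in which the $Q_j^+$ terms cancel regardless of the value of $Q_j^+$. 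Since $f_j>0$, the desired bound $\gamma_{ji}\geq r_j/x_j$ is equivalent to $\gamma_{j+1,i}\geq (r_j/x_j)f_j+d_j=r_j/x_j$, which follows from the induction hypothesis $\gamma_{j+1,i}\geq r_{j+1}/x_{j+1}$ and the monotonicity $r_j/x_j\leq r_{j+1}/x_{j+1}$. Hence $\gamma_{ji}\geq r_j/x_j>0$, so $\alpha_{ji}>0$ and \eqref{key step} holds for every feasible $w$.

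The main obstacle is really just recognizing the two structural simplifications: that $\hat{P}_i(\overline{p})\leq0$ forces $P_i^+=0$ and thereby triangularizes $A_j(w)$, and that the resulting ratio $\gamma_{ji}$ satisfies a scalar recursion with the cancellation identity above. Once these are in place, the nondecreasing-ratio hypothesis feeds the induction cleanly; I expect no analytic difficulty, only care in bookkeeping the downward induction and verifying the cancellation.
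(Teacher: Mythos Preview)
Your proposal is correct and follows essentially the same route as the paper: both arguments observe that $\hat{P}_i(\overline{p})\leq 0$ forces $P_j^+=0$, which makes $A_j(w)$ upper triangular, and then both run a downward induction on $j$ tracking the ratio $\alpha_{ji}/\beta_{ji}$. The only cosmetic difference is the invariant maintained: the paper carries $\alpha_{ji}/\beta_{ji}\geq r_i/x_i$ (a fixed lower bound depending on $i$) and uses the inequality $r_j/x_j\leq r_i/x_i$ inside the step, whereas you carry $\gamma_{ji}\geq r_j/x_j$ (a moving lower bound) and exploit the exact cancellation $(r_j/x_j)f_j+d_j=r_j/x_j$; either invariant suffices and the proofs are interchangeable.
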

	
	\begin{proof}
	As discussed in the proof of Claim \ref{claim: uniform}, it suffices to prove that for an arbitrary feasible point $w$ of the SOCP relaxation and an arbitrary $i\in\{1,\ldots,n\}$, we have $\alpha_{ji}(w)>0$, $\beta_{ji}(w)>0$ for $j=1,\ldots,i$.

We prove that $\alpha_{ji}>0$ and $\beta_{ji}>0$ for $j=1,\ldots,i$ by mathematical induction on $j$. In particular, we prove the following hypothesis
$$\textbf{H2: }\alpha_{ji}>0, ~ \beta_{ji}>0, ~ \alpha_{ji}/\beta_{ji}\geq r_i/x_i $$
inductively for $j=i,i-1,\ldots,1$. To start, note that $P_j^+=0$ for $j\in \hN^+$ since $0\leq P_j^+\leq \hat{P}_j^+(p )\leq \hat{P}_j^+(\overline{p})=0$.

	\begin{itemize}
	\item If $j=i$, then $\alpha_{ji}=r_i$, $\beta_{ji}=x_i$, $\alpha_{ji}/\beta_{ji}=r_i/x_i$. Hypothesis H2 holds.
	\item Assume that Hypothesis H2 holds for $j=k$ ($2\leq k\leq i$). When $j=k-1$, we have
	\begin{eqnarray*}
	\begin{pmatrix}
	\alpha_{ji} \\ \beta_{ji}
	\end{pmatrix} &=&
	\begin{pmatrix}
	1 & -\frac{2r_jQ_j^+}{v_j} \\ 0 & 1-\frac{2x_jQ_j^+}{v_j}
	\end{pmatrix}
	\begin{pmatrix}
	\alpha_{ki} \\ \beta_{ki}
	\end{pmatrix}.
	\end{eqnarray*}
Hence,
	\begin{eqnarray*}
	\beta_{ji} &=& \frac{1}{v_j}\left(v_j-2x_jQ_j^+\right) \beta_{ki}\\
	&\geq& \frac{1}{v_j} \left(\underline{v}_j-2x_j\left[Q_j^\mathrm{lin}(\bar{q})\right]^+\right) \beta_{ki}>0.
	\end{eqnarray*}
Then,
	\begin{eqnarray*}
	\alpha_{ji} &=& \alpha_{ki} - \frac{2r_jQ_j^+}{v_j}\beta_{ki} \geq \left(\frac{r_i}{x_i}- \frac{2r_jQ_j^+}{v_j}\right)\beta_{ki} \\
    &=& \frac{r_i}{x_i} \left(1- \frac{r_j/x_j}{r_i/x_i}\frac{2x_jQ_j^+}{v_j}\right)\beta_{ki}\\
    &\geq& \frac{r_i}{x_i} \left(1- \frac{2x_jQ_j^+}{v_j}\right)\beta_{ki}
	= \frac{r_i}{x_i}\beta_{ji} > 0.
	\end{eqnarray*}
Hence, Hypothesis H2 holds for $j=k-1$.
	\end{itemize}
	It follows that Hypothesis H2 holds for $1\leq j\leq i$, which completes the proof of Claim \ref{claim: P}.
	\end{proof}	

	\begin{claim}\label{claim: Q}
	Consider a one-line network. Assume that there exists $\overline{p}_i$ and $\overline{q}_i$ such that \eqref{relax constraints} holds for $i\in\hN^+$. Then \eqref{key step} holds for every feasible point $w$ of the SOCP relaxation if $r_i/x_i\geq r_{i+1}/x_{i+1}$ for $i=1,\ldots,n-1$, and $\hat{Q}_i(\overline{q})\leq0$, $\underline{v}_i-2r_i\hat{P}_i^+(\overline{p})>0$ for $i\in \hN^+$.
	\end{claim}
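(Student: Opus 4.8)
The plan is to mirror the proof of Claim \ref{claim: P}, exploiting the symmetry that exchanges the roles of real and reactive power and of $r$ and $x$. Just as $\hat{P}_i(\overline{p})\leq0$ forced $P_j^+=0$ in Claim \ref{claim: P}, the standing hypothesis $\hat{Q}_i(\overline{q})\leq0$ here forces $Q_j^+=0$ for $j\in\hN^+$: indeed $0\leq Q_j^+\leq\hat{Q}_j^+(q)\leq\hat{Q}_j^+(\overline{q})=0$, where $Q_j\leq\hat{Q}_j(q)$ is the one-line version of Lemma \ref{lemma: v}. Consequently $d_j(w)=2r_jQ_j^+/v_j=0$ and $f_j(w)=1-2x_jQ_j^+/v_j=1$, so each factor collapses to the lower-triangular form
$$A_j(w)=\begin{pmatrix} c_j(w) & 0 \\ -e_j(w) & 1 \end{pmatrix},$$
with $c_j(w)=1-2r_jP_j^+/v_j$ and $e_j(w)=2x_jP_j^+/v_j\geq0$.

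As in the proof of Claim \ref{claim: uniform}, I would fix a feasible $w$ and an $i\in\{1,\ldots,n\}$, write $(\alpha_{ji},\beta_{ji})^T:=A_j(w)\cdots A_{i-1}(w)u_i$, and prove $\alpha_{ji}>0$, $\beta_{ji}>0$ for $j=1,\ldots,i$ by downward induction on $j$. The dual of Hypothesis H2 is
$$\textbf{H3: }\alpha_{ji}>0,~\beta_{ji}>0,~\beta_{ji}/\alpha_{ji}\geq x_i/r_i,$$
to be established for $j=i,i-1,\ldots,1$. The base case $j=i$ is immediate since $(\alpha_{ii},\beta_{ii})^T=u_i=(r_i,x_i)^T$. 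In the inductive step from $k$ to $k-1$ (so $j=k-1$), the triangular action gives $\alpha_{ji}=c_j\alpha_{ki}$ and $\beta_{ji}=\beta_{ki}-e_j\alpha_{ki}$, and positivity of $\alpha_{ji}$ follows from
$$c_j=1-\frac{2r_jP_j^+}{v_j}\geq\frac{\underline{v}_j-2r_j\hat{P}_j^+(\overline{p})}{\underline{v}_j}>0,$$
which uses $v_j\geq\underline{v}_j$, $P_j^+\leq\hat{P}_j^+(\overline{p})$, and the assumption $\underline{v}_i-2r_i\hat{P}_i^+(\overline{p})>0$.

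The crux, and the step I expect to be the main obstacle, is propagating the ratio bound $\beta_{ji}\geq(x_i/r_i)\alpha_{ji}$. Using $\beta_{ki}\geq(x_i/r_i)\alpha_{ki}$ from the induction hypothesis I would estimate
$$\beta_{ji}=\beta_{ki}-e_j\alpha_{ki}\geq\left(\frac{x_i}{r_i}-\frac{2x_jP_j^+}{v_j}\right)\alpha_{ki},$$
and compare this against $(x_i/r_i)\alpha_{ji}=(x_i/r_i)c_j\alpha_{ki}$. After cancelling $\alpha_{ki}>0$ and the factor $2P_j^+/v_j$ (the case $P_j^+=0$ being trivial since then $\alpha_{ji}=\alpha_{ki}$ and $\beta_{ji}=\beta_{ki}$), the desired inequality reduces to $x_j\leq(x_i/r_i)r_j$, i.e. to $r_j/x_j\geq r_i/x_i$. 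This is exactly where the monotonicity hypothesis $r_i/x_i\geq r_{i+1}/x_{i+1}$ enters: since $j<i$ and $r/x$ is non-increasing in the index, $r_j/x_j\geq r_i/x_i$ holds. Hence $\beta_{ji}\geq(x_i/r_i)\alpha_{ji}>0$, so H3 holds for $j=k-1$, and by induction $\alpha_{ji}>0$, $\beta_{ji}>0$ for all $1\leq j\leq i$. Since $(\alpha_{ji},\beta_{ji})^T=A_j(w)\cdots A_{i-1}(w)u_i$ and $i$, $w$ were arbitrary, this is precisely \eqref{key step}, completing the proof dually to Claim \ref{claim: P}.
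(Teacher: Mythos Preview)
Your proposal is correct and is exactly the argument the paper intends: the paper's proof of Claim \ref{claim: Q} reads in full ``The proof of Claim \ref{claim: Q} is similar to that of Claim \ref{claim: P}, and omitted for brevity,'' and your write-up carries out precisely that duality, swapping the roles of $P\leftrightarrow Q$ and $r\leftrightarrow x$ so that the upper-triangular matrices of Claim \ref{claim: P} become lower-triangular here, and the ratio invariant $\alpha_{ji}/\beta_{ji}\geq r_i/x_i$ becomes your H3, $\beta_{ji}/\alpha_{ji}\geq x_i/r_i$.
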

	
	\begin{proof}
	The proof of Claim \ref{claim: Q} is similar to that of Claim \ref{claim: P}, and omitted for brevity.
	\end{proof} 

\end{document}